\documentclass[11pt]{amsart}
\usepackage{amssymb,amsmath}
\usepackage{graphicx}
\usepackage{subcaption}
\usepackage{bm,bbm}
\usepackage{color}
\usepackage{tikz}
\usetikzlibrary{snakes}
\usepackage{pgfplots}
\usepackage{array,blkarray}
\usepackage[hidelinks]{hyperref}
\usepackage{mathtools}  
\newtheorem{theorem}{Theorem}

\theoremstyle{definition}

\newtheorem{example}[theorem]{Example}
\theoremstyle{lemma}
\newtheorem{lemma}[theorem]{Lemma}

\theoremstyle{remark}
\newtheorem{remark}[theorem]{Remark}
\newtheorem{assumption}[theorem]{Assumption}
\numberwithin{theorem}{section}
\numberwithin{equation}{section}
\numberwithin{table}{section}
\numberwithin{figure}{section}
%

\def\R{\mathbb{R}}
\def\C{\mathbb{C}}
%
\definecolor{myBlue2}{RGB}{113,104,238} 
\definecolor{myBlue3}{RGB}{30,144,255} 
\definecolor{myGreen2}{RGB}{69,169,0} 
\definecolor{myGreen3}{RGB}{154,205,50} 
\definecolor{myRed2}{RGB}{165,42,42} 

\definecolor{color0}{rgb}{0.12156862745098,0.466666666666667,0.705882352941177}
\definecolor{color1}{rgb}{1,0.498039215686275,0.0549019607843137}
\definecolor{color3}{rgb}{0.83921568627451,0.152941176470588,0.156862745098039}
\definecolor{color2}{rgb}{0.172549019607843,0.627450980392157,0.172549019607843}

%

%

%

\DeclareMathOperator{\sspan}{span}

\DeclareMathOperator{\range}{im}

\DeclareMathOperator{\Res}{Res}

\DeclareMathOperator{\real}{Re}

\newcommand{\norm}[1]{\Vert #1 \Vert }

\newcommand{\calA}{\ensuremath{\mathcal{A}}}
\newcommand{\calB}{\ensuremath{\mathcal{B}}}

\newcommand{\calF}{\ensuremath{\mathcal{F}}}
\newcommand{\calH}{\ensuremath{\mathcal{H}}}
\newcommand{\calI}{\ensuremath{\mathcal{I}}}
\newcommand{\calK}{\ensuremath{\mathcal{K}}}
\newcommand{\cL}{\ensuremath{\mathcal{L}}}
\newcommand{\calL}{\cL}

\newcommand{\calO}{\mathcal{O}}
\newcommand{\calP}{\ensuremath{\mathcal{P}}}

\newcommand{\calQ}{\ensuremath{\mathcal{Q}}}

\newcommand{\calT}{\ensuremath{\mathcal{T}}}

\newcommand{\calV}{\ensuremath{\mathcal{V}}}

\newcommand{\y}{\ensuremath{y}}
\renewcommand{\b}{\ensuremath{\y}}

\newcommand{\G}{\ensuremath{G}}

%

\def\dx{\,\text{d}x}

\newcommand{\hook}{\hookrightarrow}
\newcommand{\conv}{\rightarrow}
\newcommand{\wconv}{\rightharpoonup}
%
%

\def\ab{a_\beta}
\def\calAb{\calA_\beta}
\def\k{{\mathbf k}}
\newcommand{\Ak}{\ensuremath{\calA_{\k}}}       
\newcommand{\Aktb}{\ensuremath{\calA_{\k,\tbeta}}} 
\newcommand{\calIa}{\ensuremath{\calI_{\alpha}}}
\newcommand{\VxC}{\ensuremath{\mathcal{W}}}
\newcommand{\LOmgII}{\ensuremath{X}}
\newcommand{\oI}{\ensuremath{\overline{\calI}_2}}
\newcommand{\aVec}{\ensuremath{\mathbf{a}} }
\newcommand{\bVec}{\ensuremath{\mathbf{b}} }
\newcommand{\cVec}{\ensuremath{\mathbf{c}} }
\newcommand{\dVec}{\ensuremath{\mathbf{d}} }

\newcommand{\xVec}{\ensuremath{\boldsymbol{x}} }
\newcommand{\yVec}{\ensuremath{\boldsymbol{y}} }
\newcommand{\zVec}{\ensuremath{\boldsymbol{z}} }

\newcommand{\AMat}{\ensuremath{\mathbf{A}} }

\newcommand{\IMat}{\ensuremath{\mathbf{I}} }
\newcommand{\bigAMat}{\ensuremath{\mathbb{A}}}
\newcommand{\bigIMat}{\ensuremath{\mathbb{I}}}
\newcommand{\bigAForm}{\ensuremath{\mathfrak{a}}}
\newcommand{\bigIForm}{\ensuremath{\mathfrak{i}}}
\newcommand{\aalpha}{\ensuremath{\underline{\alpha}} }
\newcommand{\tbeta}{\ensuremath{\tilde\beta} }
%
\def\V{\calV}
\def\cH{\calH}
\def\Hper{H^1_\text{per}(\Omega)}
%
\newcommand{\krylov}{\ensuremath{\mathfrak{K}}}
\def\eps{\varepsilon}
\def\cell{c_{\text{ell}}}

%

%
\newcommand\ulim{u^{\scalebox{0.5}{$\bigstar$}}}
\newcommand\ulimt{{\tilde u}^{\scalebox{0.5}{$\bigstar$}}}
\newcommand\vlim{v^{\scalebox{0.5}{$\bigstar$}}}
\newcommand\llim{\lambda^{\scalebox{0.5}{$\bigstar$}}}
\newcommand\omlim{\omega^{\scalebox{0.5}{$\bigstar$}}}
\newcommand\mulim{\mu^{\scalebox{0.5}{$\bigstar$}}}
\newcommand\muref{\mu^{\scalebox{0.5}{\text{ref}}}}
\newcommand\xlim{{\xVec}^{\scalebox{0.5}{$\bigstar$}}}
\newcommand\zlim{{\zVec}^{\scalebox{0.5}{$\bigstar$}}}
\newcommand\zlimt{{\tilde \zVec}^{\scalebox{0.5}{$\bigstar$}}}
\newcommand{\bb}{[\b]}
\newcommand{\blueCirLight}{%
	\begin{tikzpicture}[inner sep=0pt, baseline=(base)]%
	\draw[color0, dotted, thin](0,0) -- (5mm,0); 
	\node[color0, mark size=2.5, opacity=0.5] at (2.5mm,0){%
		\pgfuseplotmark{*}%
	};
	\node[color0, mark size=2.5] at (2.5mm,0){%
		\pgfuseplotmark{o}%
	};
	\node (base) at (0,-.6ex) {};
	\end{tikzpicture}%
}
\newcommand{\orangeTriLight}{%
	\begin{tikzpicture}[inner sep=0pt, baseline=(base)]%
	\draw[color1, dotted, thin](0,0) -- (5mm,0); 
	\node[color1, mark size=3.2, opacity=0.5] at (2.5mm,0){%
		\pgfuseplotmark{triangle*}%
	};
	\node[color1, mark size=3.2] at (2.5mm,0){%
		\pgfuseplotmark{triangle}%
	};
	\node (base) at (0,-.6ex) {};
	\end{tikzpicture}%
}
\newcommand{\greenSquLight}{%
	\begin{tikzpicture}[inner sep=0pt, baseline=(base)]%
	\draw[color2, dotted, thin](0,0) -- (5mm,0); 
	\node[color2, mark size=2.5, opacity=0.5] at (2.5mm,0){%
		\pgfuseplotmark{square*}%
	};
	\node[color2, mark size=2.5, ] at (2.5mm,0){%
		\pgfuseplotmark{square}%
	};
	\node (base) at (0,-.6ex) {};
	\end{tikzpicture}%
}
\textheight=225mm
\textwidth=150mm
\evensidemargin=30.0mm
\oddsidemargin=30.0mm
\topmargin=-0.4cm
\hoffset=-25.4mm
%

\usepackage{etoolbox}
\makeatletter
\patchcmd{\@maketitle}
  {\ifx\@empty\@dedicatory}
  {\ifx\@empty\@date \else {\vskip3ex \centering\footnotesize\@date\par\vskip1ex}\fi
   \ifx\@empty\@dedicatory}
  {}{}
\patchcmd{\@adminfootnotes}
  {\ifx\@empty\@date\else \@footnotetext{\@setdate}\fi}
  {}{}{}
\makeatother

\begin{document}
\newcommand{\myTitleI}{PDE Eigenvalue Iterations with Applications in}
\newcommand{\myTitleII}{Two-dimensional Photonic Crystals}
\newcommand{\myTitle}{\Large \myTitleI\\[0.3em] \myTitleII}
\title[PDE eigenvalue iterations for photonic crystals]{\myTitle$^*$}
\author[]{R.~Altmann$^\dagger$, M.~Froidevaux$^{\ddagger}$}
\address{${}^{\dagger}$ Institut f\"ur Mathematik, Universit\"at Augsburg, Universit\"atsstr.~14, 86159 Augsburg, Germany}
\address{${}^{\ddagger}$ Institut f\"ur Mathematik MA4-5, Technische Universit\"at Berlin, Stra\ss e des 17.~Juni 136, 10623 Berlin, Germany}
\email{robert.altmann@math.uni-augsburg.de, froideva@math.tu-berlin.de}
\thanks{${}^{*}$ Research funded by the Einstein Foundation Berlin in the frame of Einstein Center for Mathematics Berlin ECMath via project OT10 {\em Model Reduction for Nonlinear Parameter-Dependent Eigenvalue Problems in Photonic Crystals} and by
the Deutsche Forschungsgemeinschaft (DFG, German Research Foundation) under Germany's Excellence Strategy -- The Berlin Mathematics Research Center MATH+ (EXC-2046/1, project ID: 390685689).}
\date{\today}
\keywords{}
%
\begin{abstract}
We consider PDE eigenvalue problems as they occur in two-dimensional photonic crystal modeling. If the permittivity of the material is frequency-dependent, then the eigenvalue problem becomes nonlinear. In the lossless case, linearization techniques allow an equivalent reformulation as an extended but linear and Hermitian eigenvalue problem, which satisfies a G\aa{}rding inequality. For this, known iterative schemes for the matrix case such as the inverse power or the Arnoldi method are extended to the infinite-dimensional case. We prove convergence of the inverse power method on operator level and consider its combination with adaptive mesh refinement, leading to substantial computational speed-ups. For more general photonic crystals, which are described by the Drude-Lorentz model, we propose the direct application of a Newton-type iteration. Assuming some a priori knowledge on the eigenpair of interest, we prove local quadratic convergence of the method. 
Finally, numerical experiments confirm the theoretical findings of the paper. 
\end{abstract}
%
\maketitle
%
{\tiny{\bf Key words.} nonlinear eigenvalue problem, photonic crystals, inverse power method, Newton iteration}\\
\indent
{\tiny{\bf AMS subject classifications.} {\bf 65N25}, {\bf 65J10}, {\bf 65F15}} 
%
%
\section{Introduction}
Eigenvalue problems including partial differential equations (PDE) appear in several applications such as structural mechanics~\cite{BatW73}, fluid-solid structures~\cite{Vos03}, or the simulation of Bose-Einstein condensates~\cite{PitS03}. In general, such problems are considered in order to optimize certain properties or parameters of the underlying dynamical system~\cite{MehV04}. 
%
In this paper, we focus on applications as they appear in the modeling of photonic crystals~\cite{Joh87,Kuc01}. These are special composite materials with a periodic structure that affect the propagation of electromagnetic waves and thus, can be used for trapping and guiding light. As these crystals can be designed and manufactured for industrial applications, the aim is to find so-called {\em photonic band-gaps}, which prevent light within a specified frequency range from propagating~\cite{JoaJWM08, Joh12}. Direct applications areas are optical fibers~\cite[Ch.~5]{GonH14}, medical technologies with laser guides for cancer surgeries~\cite{Tsa12}, and thin film solar cells~\cite{DemJ12}. 

The corresponding mathematical model is given by a sequence of nonlinear PDE eigenvalue problems based on the Maxwell equations~\cite{SpeP05,DoeLPSW11}. An important role is played by the {\em electric permittivity}~$\eps$, which is periodic in space and characterizes certain properties of the crystal. If~$\eps$ is independent of the frequency, then we obtain a linear eigenvalue problem. In more realistic models, however, the permittivity is approximated by a rational function, which introduces the nonlinearity into the eigenvalue problem.  \medskip

Numerical methods for computing the spectrum of such materials have been studied intensively. This includes adaptive finite element methods~\cite{BurKSSZ06,GiaG12}, Newton-type methods~\cite{Kre09,HuaLM16}, and linearization techniques~\cite{SuB11,EffKE12}. In the latter case, a spatial discretization is assumed and yields then a linear but extended eigenvalue problem, for which well-known iteration schemes can be applied. 
The combination of mesh refinement and (inexact) eigenvalue iteration methods has been considered in \cite{MehM11, Mie11}.

Corresponding iterative methods for the operator case have, so far, not received much attention in the literature. In the first part of this paper, we focus on linear and Hermitian PDE eigenvalue problems in the weak formulation. This corresponds to the case of a frequency-independent permittivity. Convergence of the (inverse) power method for compact operators mapping from a Hilbert space $\cH$ to $\cH$ was already shown in~\cite{EriSL95}. In this setting, the proof basically follows the same lines as in the finite-dimensional case. General bounded operators were considered in~\cite{EasE07} but only together with a power iteration based on the exact (und thus unknown) eigenvalue. 
Considering the weak formulation, which is more natural in view of spatial discretization methods, we are in a different setting. Nevertheless, the power method converges if an appropriate scaling is included. For the $p$-Laplacian eigenvalue problem, this was shown in~\cite{Boz16}. 
Note that the proven convergence on operator level has the advantage that it is mesh- and basis-independent, cf.~\cite{AltHP19ppt} for a similar approach applied to nonlinear eigenvector problems. Thus, dependencies on a particular discretization do not play a role. Further, the approach allows to choose the spatial discretization in each iteration step independently. 	

The second part of the paper focuses on the nonlinear case as it appears in two-dimensional photonic crystal modeling. Here we consider two different paths of either linearizing the problem or applying directly a Newton iteration. In the first case, we adapt the techniques introduced in~\cite{SuB11,EffKE12} in combination with an inverse power iteration applied to the resulting linear problem. 
Due to the linearization, certain compactness properties get lost, which calls for a novel convergence analysis for the inverse power method. 
The second strategy translates the local convergence of Newton's method from~\cite{Sch08} to the operator case. An analogous method for infinite-dimensional eigenvalue problems was developed in~\cite{AnsR68}, and its local convergence was proven for Fredholm operators with index~$0$ mapping from a Hilbert space $\cH$ to $\cH$. We show a similar result when the operators arise from the here considered weak formulation. \medskip

The paper is structured as follows. In Section~\ref{sect:formulation} we introduce the problem setting, i.e., the linear PDE eigenvalue problem in its weak and operator formulation. Here we gather all the assumptions on the spaces and included operators. In particular, we assume an underlying Gelfand triple with a compact embedding. Section~\ref{sect:convHerm} then considers several iteration schemes including the inverse power method and the Arnoldi method. 
Two-dimensional photonic crystals with frequency-dependent permittivity are then topic of Sections~\ref{sect:nonlinearHermit} and~\ref{sect:nonlinear}. First, we consider a special Hermitian case. For this, we apply a linearization and the inverse power method. More realistic models are then discussed in Section~\ref{sect:nonlinear}, for which we prove the local convergence of Newton's method. Finally, we present three numerical examples in Section~\ref{sect:numerics}.
%
%
\section{Preliminaries}\label{sect:formulation}
As described in the introduction, we consider the weak formulation of a PDE eigenvalue problem. Given the sesquilinear forms~$a\colon \V\times\V \to \C$ and~$(\cdot\,, \cdot)\colon \V \times \V \to \C^+$, we search for a non-trivial pair~$(u, \lambda) \in \V\times \C$ such that for all test functions $v\in \V$ it holds that 
\begin{align}
\label{eq:evp:weak}
  a(u, v) = \lambda\, (u,v). 
\end{align}
More precisely, considering Hermitian eigenvalue problems, we are interested in the eigenpair corresponding to the smallest eigenvalue. In the following, we gather assumptions on the space $\V$ and the included sesquilinear forms. Afterwards we discuss well-known PDE eigenvalue problems, which fit into the given framework. 
%
%
\subsection{General setting}\label{sect:formulation:setting}
We start with general assumptions on the involved function spaces. 
\begin{assumption}[Function spaces]
\label{ass_spaces}
We assume $\V$ to be a complex, separable, and reflexive Banach space. Furthermore, we assume the existence of a complex and separable Hilbert space~$\cH$ (the pivot space) such that $\V$, $\cH$, $\V^*$ form a {\em Gelfand triple}, cf.~\cite[Ch.~23.4]{Zei90a} and \cite[Ch.~11.4]{Bre10}. This means, in particular, that the embedding $i_{\V\hook \cH}\colon \V \hook \cH$ is continuous and dense \cite[Ch.~17.1]{Wlo87}. The continuity constant is denoted by $C_{\V\hook \cH}$.
\end{assumption}
\begin{assumption}[Compactness]
\label{ass_compact}
The embedding $\V \hook \cH$ is compact.
\end{assumption}
With the pivot space $\cH$ in hand, we assume that the sesquilinear form~$(\cdot\,, \cdot)$ in~\eqref{eq:evp:weak} is also defined for functions in $\cH$. We even assume that this defines the inner product in the Hilbert space~$\cH$ and set $\Vert\cdot\Vert := \Vert \cdot \Vert_{\cH} = (\cdot\,, \cdot)^{1/2}$. Further, $j_\cH\colon \cH \to \cH^*$ denotes the Riesz isomorphism. The norm in the space $\V$ is denoted by $\Vert \cdot \Vert_{\V}$. For the sesquilinear form~$a$ we consider the following assumptions. 
\begin{assumption}[Sesquilinear form]
\label{ass_a}
The sesquilinear form $a\colon \V\times\V \to \C$ is assumed to be continuous and Hermitian such that $a(u,u) \in \R$ for all $u \in \V$. Furthermore, $a$ satisfies a G\aa{}rding inequality, i.e.,
\begin{equation*}
  a(u, u) \ge \alpha\, \Vert u \Vert_\V^2  - \beta\, \Vert u\Vert^2
\end{equation*}
for real constants $\alpha >0$, $\beta \ge 0$ and all $u, v\in \V$, see e.g.~\cite{QuaV94}.
\end{assumption}
\begin{remark}
\label{rem:shift}
The previous Assumption~\ref{ass_a} implies that $\ab(u, v) := a(u, v) + \beta\, (u,v)$ is $\V$-coercive and thus, defines an inner product in $\V$. As a result, $\V$ is actually a Hilbert space and the corresponding norm 
\[
	\Vert u\Vert_\beta 
	:= \ab(u, u)^{1/2} 
	\ge \sqrt{\alpha}\, \Vert u \Vert_\V, 
\]
the so-called energy norm, 
is equivalent to $\Vert \cdot \Vert_\V$. Further, all eigenvalues $\lambda$ of~\eqref{eq:evp:weak} satisfy $\ab(u,u)=(\lambda+\beta)(u,u)$, which implies $\mathbb{R}\ni(\lambda+\beta)>0$ and thus $\mathbb{R}\ni \lambda > -\beta$. 
\end{remark}
In order to be well-posed, an eigenvalue problem of the form~\eqref{eq:evp:weak} requires boundary conditions. Throughout this paper, we assume that these conditions are included in the space $\V$, cf.~the examples in the following subsection. 
We close this preliminary part with the proof of Young's inequality in the specific case of complex vectors. 
\begin{lemma}[Young's inequality]
\label{lem:Young}
Consider $\aVec, \bVec \in \C^2$. Then, for every $\delta>0$ we have an estimate of the form 
\begin{equation*}
	|\aVec \cdot\overline{\bVec}+\bVec\cdot\overline{\aVec}|
	\leq \frac{1}{\delta}\, |\aVec|^2 + \delta\, |\bVec|^2,
\end{equation*}
where $\cdot$ denotes the real dot product.
\end{lemma}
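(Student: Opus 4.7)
The plan is to reduce the estimate to the scalar Young inequality via Cauchy--Schwarz. First I would observe that the two summands are complex conjugates of each other: writing $\aVec = (a_1, a_2)$ and $\bVec = (b_1, b_2)$, the real dot product gives $\bVec \cdot \overline{\aVec} = \sum_i b_i \overline{a_i} = \overline{\sum_i a_i \overline{b_i}} = \overline{\aVec \cdot \overline{\bVec}}$. Consequently the combination $\aVec \cdot \overline{\bVec} + \bVec \cdot \overline{\aVec}$ equals $2 \real(\aVec \cdot \overline{\bVec})$, which is a real number, so its absolute value is bounded by $2|\aVec \cdot \overline{\bVec}|$.

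Next I would invoke the Cauchy--Schwarz inequality on $\C^2$ equipped with its standard Hermitian inner product to estimate $|\aVec \cdot \overline{\bVec}| \leq |\aVec|\,|\bVec|$. Combining the two steps yields $|\aVec \cdot\overline{\bVec}+\bVec\cdot\overline{\aVec}| \leq 2|\aVec|\,|\bVec|$.

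Finally I would apply the classical scalar Young inequality $2xy \leq \delta^{-1} x^2 + \delta\, y^2$ (which follows from expanding $(\delta^{-1/2} x - \delta^{1/2} y)^2 \geq 0$) to $x = |\aVec|$ and $y = |\bVec|$, obtaining the desired bound
\begin{equation*}
	|\aVec \cdot\overline{\bVec}+\bVec\cdot\overline{\aVec}| \leq 2|\aVec|\,|\bVec| \leq \frac{1}{\delta}\, |\aVec|^2 + \delta\, |\bVec|^2.
\end{equation*}
There is no genuine obstacle here; the only point worth verifying carefully is the first step, where the distinction between the real dot product and the Hermitian inner product of $\C^2$ matters and is what makes the cross term real-valued.
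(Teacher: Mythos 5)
Your proof is correct, but it takes a genuinely different route from the paper's. You first identify the left-hand side as $2\real(\aVec\cdot\overline{\bVec})$, bound it by $2|\aVec\cdot\overline{\bVec}|$, then invoke the Cauchy--Schwarz inequality for the Hermitian inner product on $\C^2$ to get $2|\aVec|\,|\bVec|$, and finally apply the scalar Young inequality. The paper instead works entirely from the two expansions $0\le|\cVec\pm\dVec|^2=|\cVec|^2+|\dVec|^2\pm(\cVec\cdot\overline{\dVec}+\dVec\cdot\overline{\cVec})$, which immediately give $|\cVec\cdot\overline{\dVec}+\dVec\cdot\overline{\cVec}|\le|\cVec|^2+|\dVec|^2$, and then rescales $\cVec=\aVec/\sqrt{\delta}$, $\dVec=\bVec\sqrt{\delta}$. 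The paper's argument is slightly more self-contained: it never names Cauchy--Schwarz or the scalar Young inequality, and the scaling trick substitutes for both. Your version is the more conventional reduction to two named textbook facts, which makes the logical dependencies more explicit and is arguably easier to generalize (e.g.\ to other inner-product spaces), at the cost of one extra intermediate estimate. Both are short and both are valid; the point you flag at the end---that the combination $\aVec\cdot\overline{\bVec}+\bVec\cdot\overline{\aVec}$ is automatically real because the two summands are conjugates---is indeed the one place where the distinction between the real dot product and the Hermitian pairing matters, and you handle it correctly.
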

\begin{proof}
For any two vectors $\cVec, \dVec \in \C^2$, the following estimates hold:
\begin{align*}
	0&\leq|\cVec+\dVec|^2=(\cVec+\dVec)\cdot\overline{(\cVec+\dVec)}=|\cVec|^2 +|\dVec|^2+\cVec\cdot\overline{\dVec}+\dVec\cdot\overline{\cVec}, \\
	0&\leq|\cVec-\dVec|^2=(\cVec-\dVec)\cdot\overline{(\cVec-\dVec)}=|\cVec|^2 +|\dVec|^2-\cVec\cdot\overline{\dVec}-\dVec\cdot\overline{\cVec}.
\end{align*}
As a consequence, we have $|\cVec \cdot\overline{\dVec}+\dVec\cdot\overline{\cVec}|\leq |\cVec|^2+|\dVec|^2$. The claim then follows by setting $\cVec=\aVec/\sqrt{\delta}$, and $\dVec=\bVec\sqrt{\delta}$.
\end{proof}
%
%
\subsection{Examples}\label{sect:formulation:exp}
We present a couple of well-known examples, which fit into the given framework if formulated in the weak setting. 
\begin{example}[Laplace eigenvalue problem]
\label{exp:laplace}
Consider the eigenvalue problem $-\Delta u = \lambda u$ in a bounded domain~$\Omega$ with homogeneous Dirichlet boundary conditions. 
For this, we set $\V := H^1_0(\Omega)$ with $\Vert \cdot\Vert_\V := \Vert \nabla\cdot\Vert_{L^2(\Omega)}$ and $\cH := L^2(\Omega)$ with the standard $L^2$ inner product. The corresponding sesquilinear form reads $a(u,v) := \int_{\Omega} \nabla u \cdot\overline{\nabla v}\dx$. Note that this implies $a(u, u) = \Vert u\Vert_\V^2$ and thus, $\alpha=1$ and $\beta=0$. The weak form of the Laplace eigenvalue problem then reads: find a non-trivial pair $(u, \lambda) \in \V\times\C$ such that for all $v\in\V$ it holds that
\[
  a(u,v) = \lambda\, (u, v).
\] 
\end{example}
\begin{example}[Schr\"odinger eigenvalue problem]
\label{exp:schroedinger}
The computation of the ground state of the linear Schr\"odinger operator leads to the sesquilinear form 
\[
	a(u, v) 
	:= \int_{\Omega} \nabla u(x)\cdot \overline{\nabla v(x)} + W(x)\, u(x) \overline{v(x)} \dx
\]
with a real-valued potential $W\in L^\infty(\Omega)$. 
For homogeneous Dirichlet boundary conditions this leads to the same spaces $\V$ and $\cH$ as in Example~\ref{exp:laplace}. Further, $a$ satisfies the G\aa{}rding inequality with $\alpha=1$ and $\beta= \max\big\{0, -\inf_{x\in\Omega} W(x)\big\}$.   
For periodic boundary conditions one has to replace the space $\V$ accordingly. 
\end{example}
In this paper, we focus on applications with photonic crystals. The dynamics of the electromagnetic fields inside such a crystal can be modelled by the Maxwell equations in the whole domain $\R^d$, cf.~\cite[Ch.~1]{DoeLPSW11}. These equations combine the magnitudes of the time-harmonic electric and magnetic fields $E$, $H$ and the frequency~$\omega$, which takes the role of an eigenvalue. 

One crucial parameter within the equations is the relative electric permittivity of the materials inside the crystal. We assume the relative permittivity $\eps$ to be piecewise-constant and periodic in space as well as bounded in the sense that 
\[
	1 
	\le \eps(x, \omega)
	\le \eps_{\max}
	< \infty
\]
for all $x\in \R^d$ and $\omega$ in some frequency domain of interest. For the applications in mind, where~$\eps$ is given as a rational function, this means that we consider~$\omega$ bounded away from the poles.  In the two-dimensional case, i.e., when $\eps$ is periodic within a two-dimensional plane and constant along the direction orthogonal to this plane, the Maxwell eigenvalue problem decouples into so-called \textit{transverse magnetic} (TM) and \textit{transverse electric} (TE) modes. 
Thanks to the periodicity of $\eps$, which implies a discrete translational symmetry in the system, a \textit{Floquet transformation} can be applied to reduce the problem posed in~$\R^2$ to a family of problems on a bounded domain $\Omega$ called the \textit{Wigner--Seitz cell} of the crystal lattice, see e.g.~\cite{Kuc01, DoeLPSW11}. Note that the function $\eps$ is here a unitless quantity expressed relative to $\eps_0$, the vacuum permittivity. Within this paper, we assume the magnetic permeability of the crystal to be constant and equal to that of vacuum denoted by~$\mu_0$. 
\begin{example}[TM mode]
\label{exp:TM}
In the two-dimensional setting we consider the TM mode with a real-valued frequency-independent function $\eps(x)$. The resulting PDE eigenvalue problem describes the third component of the electric field $E_3$, from which one can directly compute the components $H_1$ and $H_2$. 
Let $\k$ be a fixed wave vector in the so-called {\em irreducible Brillouin zone} $\calK \subset \R^2$, cf.~\cite[Ch.~1]{DoeLPSW11}, $u_\k$ be the Floquet transform of $E_3(x)$ at $\k$, and $\nabla_\k\coloneqq \nabla+i\k$ denote the {\em shifted gradient}. Then, $u_\k$ satisfies the eigenvalue problem 
\[
	-\nabla_\k \cdot \nabla_\k\, u_\k(x) = \omega^2 \mu_0\ \eps_0\, \eps(x) u_\k(x)
\]
for all $x\in\Omega$.
For the sake of conciseness, we use in the following by abuse of notation a scaled frequency defined by $\omega\to \omega/\sqrt{\mu_0 \eps_0}$. For the weak formulation of the eigenvalue problem we then define $\lambda\coloneqq \omega^2$. Including periodic boundary conditions, we set $\V = \Hper$ with the standard $H^1$-norm and $\cH= L^2(\Omega)$. Note that $\V$ is densely embedded in $\cH$ and thus, Assumption~\ref{ass_spaces} is satisfied, cf.~\cite[Ch.~4.4]{Bre10}. The sesquilinear form $a$ and the (weighted) inner product in $\cH$ then read
\begin{equation}
\label{eq:formA_TM}
 	a(u, v) 
	:= \int_{\Omega} \nabla_\k u(x)\cdot \overline{\nabla_\k v(x)} \dx
\end{equation}
and
\[
	(u, v) := \int_{\Omega}\eps(x) u(x) \overline{v(x)} \dx.
\]
Due to the boundedness of~$\eps$, the sesquilinear form~$(\cdot, \cdot)$ defines an inner product in~$\cH$. The following lemma shows that $a$ satisfies Assumption~\ref{ass_a}. 
\end{example}
\begin{lemma}
\label{lem:betaTM}
For a fixed wave vector $\k \in \R^2$ the sesquilinear form $a$ defined in~\eqref{eq:formA_TM} in Example~\ref{exp:TM} is Hermitian, continuous, and satisfies G{\aa}rding's inequality for any $\beta>0$. 
\end{lemma}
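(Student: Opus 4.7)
The plan is to verify the three claimed properties of $a$ one by one; the first two are straightforward and the Gårding estimate is where the Young-type trick from Lemma~\ref{lem:Young} is really needed.

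First, the Hermitian property follows from a direct calculation: since $\k\in\R^2$, we have $\overline{\nabla_\k v}=(\nabla-i\k)\overline v$, and writing $a(v,u)$ with the real dot product shows $\overline{a(v,u)}=a(u,v)$. In particular $a(u,u)\in\R$ for all $u\in\V$. Continuity is equally immediate: Cauchy--Schwarz gives $|a(u,v)|\le\|\nabla_\k u\|_{L^2(\Omega)}\|\nabla_\k v\|_{L^2(\Omega)}$, and the triangle inequality yields $\|\nabla_\k u\|_{L^2(\Omega)}\le (1+|\k|)\|u\|_\V$, so $|a(u,v)|\le(1+|\k|)^2\,\|u\|_\V\,\|v\|_\V$.

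The heart of the argument is the Gårding estimate. I would expand, with the real dot product of Lemma~\ref{lem:Young} in mind,
\begin{equation*}
a(u,u)=\int_{\Omega}(\nabla u + i\k u)\cdot\overline{(\nabla u + i\k u)}\dx = \|\nabla u\|_{L^2(\Omega)}^2 + |\k|^2\|u\|_{L^2(\Omega)}^2 + \int_{\Omega}\bigl[\nabla u\cdot\overline{i\k u}+i\k u\cdot\overline{\nabla u}\bigr]\dx.
\end{equation*}
The mixed term is automatically real (one can check $\nabla u\cdot\overline{i\k u}+i\k u\cdot\overline{\nabla u}=2\k\cdot\operatorname{Im}(\overline u\,\nabla u)$), so nothing is lost by estimating its absolute value. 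Applying Lemma~\ref{lem:Young} pointwise with $\aVec=\nabla u$ and $\bVec=i\k u$ (swapping the role of $\delta$ if desired) bounds it by $\delta\|\nabla u\|_{L^2(\Omega)}^2+\delta^{-1}|\k|^2\|u\|_{L^2(\Omega)}^2$ for any $\delta>0$. Therefore
\begin{equation*}
a(u,u)\ge (1-\delta)\,\|\nabla u\|_{L^2(\Omega)}^2 + \bigl(1-\delta^{-1}\bigr)|\k|^2\,\|u\|_{L^2(\Omega)}^2.
\end{equation*}

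Given an arbitrary $\beta>0$, I would then pick $\delta\in(0,1)$ small enough that $1-\delta>0$ but large enough that $(\delta^{-1}-1)|\k|^2<\beta$ (for $\k=0$ any $\delta\in(0,1)$ works). Setting $\alpha\coloneqq\min\{1-\delta,\ \beta-(\delta^{-1}-1)|\k|^2\}>0$ and using that $\eps\ge 1$ implies $\|u\|_{L^2(\Omega)}^2\le\|u\|^2$, one adds the missing $\beta\|u\|^2\ge\beta\|u\|_{L^2(\Omega)}^2$ on both sides and recovers $a(u,u)\ge\alpha\|u\|_\V^2-\beta\|u\|^2$.

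The only genuinely delicate point is managing the $\k$-dependent cross term, and I expect the main obstacle to be recognising that Lemma~\ref{lem:Young} must be applied with $\delta$ chosen as a function of $\beta$; otherwise the $(\delta^{-1}-1)|\k|^2$ term prevents an arbitrary $\beta>0$ from working. Everything else is bookkeeping.
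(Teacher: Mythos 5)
Your proof is correct and follows essentially the same approach as the paper: expand $a(u,u)$, apply the pointwise Young inequality from Lemma~\ref{lem:Young} to the cross term, exploit $\eps\ge 1$ to compare $\|\cdot\|_{L^2}$ with $\|\cdot\|$, and tune the Young parameter. The only differences are cosmetic---you work with $\delta\in(0,1)$ where the paper takes $\delta>1$ (a reparametrization $\delta\leftrightarrow\delta^{-1}$ after the swap you mention), you extract $\alpha$ directly for the prescribed $\beta$ rather than observing that $\beta=(\delta-\delta^{-1})|\k|^2$ can be made arbitrarily small, and you avoid the paper's separate $\k=0$ case.
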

\begin{proof}
Clearly, the sesquilinear form $a$ is Hermitian. For the continuity we apply the Cauchy-Schwarz inequality with respect to the complex dot product in $\C^2$ as well as the inner product in $\cH$ and obtain for all $u, v \in \V$,
\[
	a(u,v)
	\le \int_{\Omega} |\nabla_\k u | |\nabla_\k v| \dx 
	\le \Big(\int_{\Omega} |\nabla_\k u |^2 \dx \Big)^{1/2}  \Big(\int_{\Omega} | \nabla_\k v |^2 \dx\Big)^{1/2}.
\]
Young's inequality from Lemma~\ref{lem:Young} with $\delta=1$ then yields
\begin{align*}
	\int_{\Omega} |\nabla_\k u |^2 \dx 
	&= \int_{\Omega} |\nabla u |^2 + |\k|^2 |u|^2 + \nabla u \cdot\overline{(i \k u)} + {(i \k u)}\cdot \overline{\nabla u} \dx \\
	&\leq \int_{\Omega} |\nabla u|^2 + |\k|^2|u|^2 + |\nabla u|^2 +|\k|^2|u|^2 \dx
	\leq 2\max\{1, |\k|^2 \} \Vert u\Vert_\V^2, 
\end{align*}
which proves the continuity of $a$. To show the G{\aa}rding inequality, we first consider the case $\k=0$. Then, for any $0<\beta \le 1$, we have 
\[
	a(u,u)
	= \int_{\Omega} |\nabla u |^2 \dx
	\ge (1-\beta) \Vert \nabla u \Vert^2_{L^2(\Omega)} + \beta\, \Vert u \Vert_\V^2 - \beta \int_{\Omega} \eps\, u \overline{u} \dx
	\ge \beta \Vert u \Vert_\V^2 - \beta \Vert u\Vert^2,
\]
i.e., the G{\aa}rding inequality with~$\alpha=\beta$. Otherwise, for $\k\neq 0$, we apply once more Lemma~\ref{lem:Young} for some parameter $\delta > 0$ and get 
\begin{align*}
	a(u,u)
	= \int_{\Omega} |\nabla_\k u |^2 \dx 	
	&\geq \int_{\Omega} |\nabla u|^2+|\k|^2 |u|^2 - \big| \nabla u \cdot \overline{(i\k u)} + (i\k u)\cdot \overline{\nabla u} \big| \dx \\
	&\geq \int_{\Omega} |\nabla u|^2+|\k|^2 |u|^2  - \frac{|\nabla u|^2}{\delta} - \delta |\k|^2|u|^2 \dx. 
\end{align*}
%
%
Now assume $\delta>1$ and define $\alpha\coloneqq (1- \delta^{-1})\min\{1, |\k|^2 \}>0$. Then, $a$ satisfies the G{\aa}rding inequality with $\beta\coloneqq (\delta-\delta^{-1})\, |\k|^2>0$. Note that $\beta$ can be chosen arbitrarily small with an appropriate choice of $\delta>1$. 
\end{proof}
\begin{example}[TE mode]
\label{exp:TE}
In the eigenvalue problem corresponding to the TE mode, the relative permittivity $\eps$ appears in the differential operator. More precisely, for a fixed wave vector $\k$, we search for $u_\k$ such that 
\[
	-\nabla_\k \cdot \frac{1}{\eps(x)} \nabla_\k\, u_\k(x)
	= \lambda\, u_\k(x)
\]
for all $x\in\Omega$. Considering again periodic boundary conditions, we set $\V = \Hper$ and $\cH= L^2(\Omega)$ with the standard inner products. In this case, the sesquilinear form $a$ has the form
\[
	a(u, v) 
	:= \int_{\Omega}\frac{1}{\eps(x)} \nabla_\k u\cdot \overline{\nabla_\k v} \dx.
\]
The proof of the G{\aa}rding inequality follows similarly as in Lemma~\ref{lem:betaTM}. 
\end{example}
More general eigenvalue problems are discussed in Sections~\ref{sect:nonlinearHermit} and~\ref{sect:nonlinear}. There we consider a TM mode with an electric permittivity, which depends on the frequency $\omega$. This then leads to a nonlinear eigenvalue problem.
%
%
\subsection{Operator formulation}\label{sect:formulation:op}
The weak formulation of the eigenvalue problem~\eqref{eq:evp:weak} can be equivalently written as an operator equation in the (conjugate) dual space of $\V$, i.e., the space of conjugate linear and continuous mappings from $\V \to \C$. This then yields a convenient formulation for the introduction of iterative schemes. We introduce the operator $\calA\colon \V\to \V^*$ by 
\[
  \langle \calA u, v \rangle := a(u,v).
\]
Further, we define $\calI\colon \V\to \V^*$ as the embedding of $\V$ in $\V^*$ induced by the Gelfand triple $\V$, $\cH$, $\V^*$ from Assumption~\ref{ass_spaces} with respect to the inner product in $\cH$, cf.~\cite[Ch.~23.4]{Zei90a}. To be precise, this means~$\langle \calI u, v \rangle = (u,v)$ for all $u, v\in \V$. The operator equation corresponding to~\eqref{eq:evp:weak} then reads
\begin{align}
\label{eq:evp:op}
	\calA u = \lambda\, \calI u
	\qquad\text{in } \V^*. 
\end{align}
Note that this equation is stated in the dual space of $\V$, which means that we consider test functions in $\V$ as in~\eqref{eq:evp:weak}. Hence, the two formulations \eqref{eq:evp:weak} and \eqref{eq:evp:op} are equivalent. 

Finally, recall the definition of the shifted sesquilinear form $\ab$ from Remark~\ref{rem:shift}. With this, we define the corresponding operator $\calAb:= \calA+\beta\calI \colon \V\to \V^*$, which is then positive and thus, invertible.   	
\section{Iterative Methods on Operator Level}\label{sect:convHerm}
In this section, we analyze iterative methods to find the smallest eigenvalue as well as the corresponding eigenfunction of the operator eigenvalue problem~\eqref{eq:evp:op}. We emphasize that we do not apply any spatial discretization but perform the eigenvalue iteration directly to the operator equation. 

We first consider the inverse power method for which we prove the convergence in $\V$ under the assumptions collected in the previous section. For this we consider two variants of the method and discuss the commutativity of spatial discretization and eigenvalue iteration, assuming an appropriate normalization within the scheme. 
Second, we discuss Arnoldi's method for the operator case, based on Krylov subspaces as in the matrix case. 
%
%
\subsection{Inverse power method}\label{sect:convHerm:power}
Power and inverse power methods come in various variants with different kinds of scaling, see e.g.~\cite[Ch.~10.3]{AllK08} or \cite[Ch.~4]{Saa11} for the matrix case. One may even consider the scaling with the exact eigenvalue $\lambda$ as done, e.g., in~\cite{EasE07, AltHP18ppt}. Clearly, the latter is only of interest for theoretical observations rather than actual computations. 

As for every iteration scheme we assume a starting function $u^0\in\V$. In order to permit the iterates to converge to the wanted eigenfunction, one needs an additional assumption on~$u^0$, e.g., having a non-vanishing component in the direction of this eigenfunction. 
%
%
\subsubsection{Rayleigh quotient iteration}\label{sect:convHerm:Rayleigh}
A direct application of the inverse power method for the operator case would apply the inverse of the differential operator over and over again. However, this operator may not be invertible and the {\em Rayleigh quotient}
$
\langle \calA u^j, u^j\rangle/\langle \calI u^j, u^j \rangle
$ is not guaranteed to remain positive. Thus, we consider the shifted eigenvalue problem 
\begin{align}
\label{eq:evp:op:shift}
	\calAb u 
	:= (\calA + \beta\calI)\, u 
	= (\lambda + \beta)\, \calI u
	=: \mu\, \calI u
	\qquad\text{in } \V^*. 
\end{align}
This then leads to the following algorithm: 
Given an initial function $u^0 \in \V$, $u^0 \neq 0$, we solve for $j=1,2,\dots$ the variational problem  
\begin{align}
\label{eq:iter:power}
	\calAb u^j
	= \mu^{j-1}\, \calI \tilde u^{j-1}
	\qquad\text{in } \V^*.
\end{align}
Therein, $\tilde u^{j} := u^{j} / \Vert u^{j}\Vert$ includes the normalization in $\cH$, which is the natural norm corresponding to the eigenvalue problem~\eqref{eq:evp:op}. 
Further, $\mu^j$ denotes the Rayleigh quotient, i.e., 
\[
  \mu^j 
  := \frac{\langle \calAb u^j, u^j\rangle}{\langle \calI u^j, u^j \rangle}
  = \frac{\ab(u^j, u^j)}{(u^j, u^j)}
  = \frac{\Vert u^j\Vert^2_\beta}{\Vert u^j\Vert^2}
  > 0.
\]
We emphasize that the iterates $u^j$ of the power iteration~\eqref{eq:iter:power} are not normalized. 	
The presence of the Rayleigh quotient within the iteration directly provides an approximation of the eigenvalue $\lambda$, given by $\lambda^j := \mu^j - \beta$. It remains to discuss the convergence of the suggested iteration. 
\begin{theorem}
\label{thm:power}
Given Assumptions~\ref{ass_spaces}-\ref{ass_a}, the power iteration~\eqref{eq:iter:power} converges to an eigenpair $(\ulim, \llim)$ of~\eqref{eq:evp:op} in the sense that the (sub)sequences $u^j$ and $\lambda^j := \mu^j-\beta$ satisfy $u^j \conv \ulim$ in~$\V$ and $\lambda^j \conv \llim$ with $\calA \ulim = \llim\calI \ulim$ in $\V^*$.
\end{theorem}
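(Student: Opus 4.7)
The plan is to recast the iteration as a power iteration for the bounded solution operator $T \coloneqq \calAb^{-1}\calI \colon \V \to \V$, for which the classical spectral machinery is available. First I would verify that $T$ is bounded (invertibility of $\calAb$, Remark~\ref{rem:shift}), compact (the map $\calI$ factors through the compact embedding $\V \hook \cH$ provided by Assumption~\ref{ass_compact}), self-adjoint with respect to the inner product $\ab$, since $\ab(Tu,v) = \langle \calAb Tu, v\rangle = \langle \calI u, v\rangle = (u,v) = \ab(u,Tv)$, and positive, since $\ab(Tu,u) = (u,u) > 0$ for $u \neq 0$. The spectral theorem for compact self-adjoint operators on the Hilbert space $(\V,\ab)$ then furnishes eigenvalues $\nu_i = 1/\mu_i$ with $\mu_i \coloneqq \lambda_i+\beta$, ordered $0<\mu_1 \leq \mu_2 \leq \dots$, and an $\ab$-orthonormal eigenbasis $\{\phi_i\} \subset \V$. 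The rescaled vectors $\tilde\phi_i \coloneqq \sqrt{\mu_i}\,\phi_i$ are then $\cH$-orthonormal and, by density of $\V$ in $\cH$, form an orthonormal basis of $\cH$.

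Second, I unfold the iteration. Because $u^j = (\mu^{j-1}/\|u^{j-1}\|)\,T u^{j-1}$ with a positive scalar factor, induction gives $u^j = d_j\, T^j u^0$ with $d_j > 0$, and hence $\tilde u^j = T^j u^0/\|T^j u^0\|$. Expanding $u^0 = \sum_i c_i \tilde\phi_i$ in $\cH$ yields $T^j u^0 = \sum_i c_i \mu_i^{-j}\tilde\phi_i$. Let $i_0 \coloneqq \min\{i: c_i \neq 0\}$, which exists since $u^0 \neq 0$ and corresponds to the standing assumption that the starting function has a non-vanishing component in some eigen-subspace. Set $w \coloneqq \sum_{i:\mu_i=\mu_{i_0}} c_i \tilde\phi_i \neq 0$ and define the candidate eigenpair by $\ulim \coloneqq w/\|w\|$ and $\llim \coloneqq \mu_{i_0}-\beta$.

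The key technical step is to establish $\mu_{i_0}^j T^j u^0 \to w$ in $\V$. This follows by estimating $\|\mu_{i_0}^j T^j u^0 - w\|_\beta^2 = \sum_{i:\mu_i>\mu_{i_0}} c_i^2 \mu_i (\mu_{i_0}/\mu_i)^{2j}$: each summand tends to $0$, and is dominated by $c_i^2 \mu_i$, the series of which equals $\|u^0\|_\beta^2 < \infty$ precisely because $u^0 \in \V$. Dominated convergence for series then yields convergence in the $\ab$-norm, equivalent to $\|\cdot\|_\V$. After normalization this gives $\tilde u^j \to \ulim$ in $\V$ (hence in $\cH$), and continuity of $\ab$ delivers $\mu^j = \ab(\tilde u^j,\tilde u^j) \to \mu_{i_0}$, that is, $\lambda^j \to \llim$.

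Finally I transfer the convergence from $\tilde u^j$ to $u^j$ and extract the eigenvalue equation. Testing $\calAb u^j = \mu^{j-1}\calI\tilde u^{j-1}$ against $u^j$ and using $\|u^j\|_\beta^2 = \mu^j\|u^j\|^2$ yields the scalar identity $\mu^j\|u^j\| = \mu^{j-1}(\tilde u^{j-1},\tilde u^j)$. Since $\tilde u^{j-1},\tilde u^j \to \ulim$ in $\cH$ with $\|\ulim\|=1$ and both Rayleigh quotients share the common limit $\mu_{i_0}$, this forces $\|u^j\|\to 1$, so $u^j = \|u^j\|\,\tilde u^j \to \ulim$ in $\V$. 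Passing to the limit in the iteration in $\V^*$ reproduces $\calAb\ulim = \mu_{i_0}\calI\ulim$, equivalently $\calA\ulim = \llim\calI\ulim$. The main obstacle is the $\V$-convergence step above: without exploiting the $\ab$-orthonormal spectral basis together with the summability afforded by $u^0\in\V$, one would only obtain $\cH$-convergence, and recovering $\V$-convergence of the iterates would require the detour of extracting a weakly convergent subsequence in $\V$, upgrading to strong convergence in $\cH$ via the compact embedding, and then passing to the limit in the iteration equation.
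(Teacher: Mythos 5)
Your proof is correct, but it follows a genuinely different route from the paper's. The paper follows a variational scheme in the spirit of Bozorgnia: it first shows monotone decrease of the Rayleigh quotients $\mu^j$ by testing~\eqref{eq:iter:power} against $u^j$ and $\tilde u^{j-1}$, deduces uniform $\V$-bounds on $u^j$ and $\tilde u^j$, extracts a weakly convergent subsequence which is strongly convergent in $\cH$ by the compact embedding (Assumption~\ref{ass_compact}), passes to the limit in the iteration to obtain the eigenvalue equation, and finally upgrades to strong $\V$-convergence via weak lower semicontinuity of $\Vert\cdot\Vert_\beta$. You instead recast the scheme as a scalar power iteration for the resolvent $T = \calAb^{-1}\calI$, establish that $T$ is compact, $\ab$-self-adjoint and positive on $(\V,\ab)$, expand $u^0$ in the resulting eigenbasis, and close with a dominated-convergence estimate in the $\ab$-norm. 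This is precisely the ``second strategy'' the paper gestures at in the remark following the theorem (citing the compact-resolvent results of Erickson, Smith, and Laub); you have supplied the full argument. Your version buys more than the paper's: full-sequence (not just subsequence) convergence, explicit identification of $\ulim$ as the normalized $\cH$-orthogonal projection of $u^0$ onto the lowest active eigenspace, and a geometric rate governed by $\mu_{i_0}/\mu_{i_1}$. What the paper's argument buys in exchange is robustness: it uses no spectral expansion and carries over to settings without a compact self-adjoint solution operator (indeed Bozorgnia uses the same skeleton for the nonlinear $p$-Laplacian, which the paper cites as its model). Two small slips worth fixing: since $\V$ is a complex space you should write $|c_i|^2$ rather than $c_i^2$ in the Parseval identity $\Vert u^0\Vert_\beta^2 = \sum_i |c_i|^2\mu_i$ and in the tail estimate; and note that you are explicitly invoking the standing condition on $u^0$ (non-vanishing component in the direction of interest), which the paper states before the theorem but does not actually use inside its proof, settling instead for subsequence convergence to some eigenpair.
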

\begin{proof}
We follow the proof in~\cite{Boz16} where the convergence of the inverse power method for the $p$-Laplacian is shown. 
The first step is to show the monotonic decrease of the sequence $\mu^j$. For this, we consider~\eqref{eq:iter:power} with test functions $u^j$ and $\tilde u^{j-1}$ leading to 
\[
	\Vert u^j\Vert^2_\beta 
  	\le \mu^{j-1} \Vert u^j \Vert, \qquad
	\mu^{j-1}
    = \mu^{j-1} \Vert \tilde u^{j-1}\Vert^2 
    \le \sqrt{\mu^{j-1}} \Vert u^j\Vert_\beta.  	
\]
A combination of these two estimates yields $\Vert u^j \Vert_\beta \le \sqrt{\mu^{j-1}} \Vert u^j \Vert$ and thus,
\begin{align}
\label{eqn_proof_a}
  \sqrt{\mu^j}
  = \frac{\Vert u^j\Vert_\beta}{\Vert u^j\Vert}
  \le \sqrt{\mu^{j-1}}.
\end{align}
The monotonicity and $\mu^j > 0$, which follows from the positivity of $\calAb$, imply that there exists the limit $\mulim := \lim_{j \to \infty} \mu^j \ge 0$. 
It even holds $\mulim > 0$, since the $\mu^j$ are uniformly bounded away from zero. 

As a second step, we conclude from estimate~\eqref{eqn_proof_a} that $\Vert \tilde u^j\Vert_\beta = \sqrt{\mu^j} \le \sqrt{\mu^0}$. Since the norms $\Vert\cdot\Vert_\beta$ and $\Vert \cdot\Vert_\V$ are equivalent, we obtain that $\tilde u^j$ is uniformly bounded in~$\V$. Thus, there exists a convergent subsequence and an element $\ulimt \in \V$, which satisfy (without relabeling) 
\[
  \tilde u^j \wconv \ulimt \quad\text{in }\V, \qquad
  \tilde u^j \conv \ulimt \quad\text{in }\cH. 
\]
Note that we have used here the compact embedding $\V \hook \cH$ from Assumption~\ref{ass_compact}. Obviously, we have $\Vert \ulimt \Vert = \lim_{j\to \infty} \Vert \tilde u^j\Vert = 1$. 
Further, we know from previous calculations that 
\[
  \Vert u^j \Vert_\beta^2
  \le \mu^{j-1} \Vert u^j\Vert
  \le \mu^0 \Vert u^j\Vert
  \le C_{\V\hook\cH} \mu^0 \Vert u^j\Vert_\V
  \le C_{\V\hook\cH} \mu^0 \alpha^{-\frac{1}{2}} \Vert u^j \Vert_\beta. 
\]
This means that the sequence $u^j$ is uniformly bounded in $\V$. Thus, there exists a limit $\ulim \in\V$ such that (again without relabeling of the subsequence) 
\[
  u^j \wconv \ulim \quad\text{in }\V, \qquad
  u^j \conv \ulim \quad\text{in }\cH. 
\]
%
In the following we compare the two limits $\ulim$ and $\ulimt$. For this, we consider 
\[
  \mu^j
  = \frac{a_\beta( u^j, u^j)}{\Vert u^j\Vert^2}  
  = \mu^{j-1} \frac{(\tilde u^{j-1}, u^j)}{\Vert u^j\Vert^2}  
  = \frac{\mu^{j-1}}{\Vert u^j\Vert} (\tilde u^{j-1},\tilde  u^j)
\]
Taking the limit $j\to\infty$ on both sides, we conclude that 
\[
  \mulim
  = \frac{\mulim}{\Vert \ulim \Vert} \Vert \ulimt \Vert^2 
  = \frac{\mulim}{\Vert \ulim \Vert},  
\]
i.e., $\Vert \ulim \Vert = 1$. As a result, the sequence $(\tilde u^j)$ converges to $\ulim/\Vert \ulim\Vert = \ulim$. The uniqueness of the limit then yields $\ulim = \ulimt$. 

To show that the pair $(\ulim, \llim)$ with $\llim := \mulim-\beta$ is an eigenpair of~\eqref{eq:evp:op}, we apply the limit to equation~\eqref{eq:iter:power}. 
We conclude that $(\ulim, \mulim)$ indeed solves 
\begin{align}
\label{eqn:reviewer2}
  a_\beta(\ulim, v) = \mulim (\ulim, v)\qquad \text{for all } v\in \V.
\end{align}
Thus, we have $a(\ulim, v) = \llim (\ulim, v)$ for all $v\in\V$ or, in operator form, $\calA \ulim = \llim \calI \ulim$. The weak convergence $\tilde u^j \wconv \ulimt = \ulim$ in $\V$ additionally implies, using~\eqref{eqn:reviewer2} with $v = u^\star$, 
\[
  \sqrt{\mulim} 
  = \Vert \ulim \Vert_\beta 
  \le \liminf_{j\to \infty} \Vert \tilde u^j \Vert_\beta 
  = \liminf_{j\to \infty} \sqrt{\mu^j}
  = \sqrt{\mulim}. 
\]
Note that the inequality is strict, if and only if the convergence is not strong. This implies $\tilde u^j, u^j \conv \ulim$ in $\V$. If~$\mulim$ is a single eigenvalue, then every convergent subsequence has the same limit.  
\end{proof}
\begin{remark}
In order to show that $(\ulim, \llim)$ is the smallest eigenpair, one needs an additional assumption on $u^0$. In the real case, one may consider $u^0\ge 0$, using the maximum principle, see e.g.~\cite{Boz16}. 
\end{remark}
\begin{remark}
A second strategy to prove Theorem~\ref{thm:power} is to reformulate the eigenvalue problem in terms of the resolvent. Assumption~\ref{ass_compact} shows that the resolvent is a compact operator such that the results in~\cite{EriSL95} can be applied.
\end{remark}
%
\subsubsection{An alternative power method}
In Theorem~\ref{thm:power} we have shown that the inverse power method~\eqref{eq:iter:power} provides in the limit an eigenvalue (the limit of the Rayleigh quotient) and an eigenfunction. However, one may also omit the Rayleigh quotient, which leads to the iteration 
\begin{align}
\label{eq:iter:power2}
  \calAb v^j
  = \calI \tilde v^{j-1}
  \qquad\text{in } \V^*
\end{align}
and the following convergence result. 
\begin{lemma}
\label{lem:power2}
Assume $u^0=v^0 \in \V$ with $\Vert u^0\Vert = 1$. Let $u^j$ and $v^j$ be the sequences obtained from the iteration procedures~\eqref{eq:iter:power} and~\eqref{eq:iter:power2}, respectively. Then, we have the relation $u^j = \mu^{j-1} v^j$ for all $j\ge 1$ with the Rayleigh quotient $\mu^j = \Vert u^j\Vert_\beta^2 / \Vert u^j\Vert^2$. 
\end{lemma}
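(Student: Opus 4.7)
The plan is to prove the identity $u^j = \mu^{j-1} v^j$ by induction on $j$, with the key observation that the normalization steps $\tilde u^{j-1} = u^{j-1}/\|u^{j-1}\|$ and $\tilde v^{j-1} = v^{j-1}/\|v^{j-1}\|$ absorb any positive scalar factor relating the two sequences. Throughout, I use that $\calAb$ is invertible, which holds since $\ab$ is $\V$-coercive (Remark~\ref{rem:shift}), so any equation of the form $\calAb w = f$ in $\V^*$ has a unique solution $w \in \V$.

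For the base case $j = 1$, since $\|u^0\| = 1$ we have $\tilde u^0 = u^0 = v^0 = \tilde v^0$, hence
\[
  \calAb u^1 = \mu^0\, \calI \tilde u^0 = \mu^0\, \calI \tilde v^0 = \mu^0\, \calAb v^1,
\]
and invertibility of $\calAb$ gives $u^1 = \mu^0 v^1$. For the inductive step, assume $u^{j-1} = \mu^{j-2} v^{j-1}$. Since all Rayleigh quotients $\mu^k$ are strictly positive (as noted in Section~\ref{sect:convHerm:Rayleigh} using positivity of $\calAb$), we get $\|u^{j-1}\| = \mu^{j-2} \|v^{j-1}\|$, and therefore
\[
  \tilde u^{j-1} = \frac{u^{j-1}}{\|u^{j-1}\|} = \frac{\mu^{j-2}\, v^{j-1}}{\mu^{j-2}\, \|v^{j-1}\|} = \tilde v^{j-1}.
\]
Plugging this into \eqref{eq:iter:power} and comparing with \eqref{eq:iter:power2} yields
\[
  \calAb u^j = \mu^{j-1}\, \calI \tilde u^{j-1} = \mu^{j-1}\, \calI \tilde v^{j-1} = \mu^{j-1}\, \calAb v^j,
\]
so invertibility of $\calAb$ again gives $u^j = \mu^{j-1} v^j$, closing the induction.

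There is no serious obstacle here: the entire argument is an induction whose engine is the scaling invariance of the normalization map combined with the linearity and invertibility of $\calAb$. The only point worth a line of justification is that $\mu^{j-2} > 0$ so that division is permitted, which follows from $\calAb$ being positive. As a byproduct one could also note that $\mu^j = \|u^j\|_\beta^2/\|u^j\|^2 = \|v^j\|_\beta^2/\|v^j\|^2$, since the common factor $\mu^{j-1}$ cancels in both numerator and denominator; this shows that the Rayleigh quotients extracted from the two iterations coincide, so that \eqref{eq:iter:power2} carries essentially the same spectral information as~\eqref{eq:iter:power}.
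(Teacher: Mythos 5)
Your proof is correct and follows essentially the same inductive argument as the paper: establish the base case via $\tilde u^0 = \tilde v^0$, then show in the inductive step that the positive factor $\mu^{j-2}$ is absorbed by the normalization so that $\tilde u^{j-1} = \tilde v^{j-1}$, and conclude via linearity and invertibility of $\calAb$. The only cosmetic difference is that the paper writes the iteration explicitly as $\calAb^{-1}\calI$ applied to the normalized iterate, whereas you leave it in implicit form and invoke uniqueness of the solution; the content is identical.
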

\begin{proof}
We prove this result by mathematical induction and observe first that
\[
	u^1
	= \mu^0 \calAb^{-1} \calI u^0
	= \mu^0 \calAb^{-1} \calI v^0
	= \mu^0 v^1.
\]
Now, assuming that $u^j = \mu^{j-1} v^j$ is true for a fixed but arbitrary index~$j$, we obtain
\[
	u^{j+1}
	= \mu^{j} \calAb^{-1} \calI \frac{u^j}{\Vert u^j\Vert}
	= \mu^{j} \calAb^{-1} \calI \frac{\mu^{j-1} v^j}{\mu^{j-1} \Vert v^j\Vert}
	= \mu^{j} \calAb^{-1} \calI \frac{v^j}{\Vert v^j\Vert}
	= \mu^{j} v^{j+1}. 
\]
Note that we have used the fact that $\mu^j>0$ and thus $|\mu^j| = \mu^j$. 
\end{proof}
Lemma~\ref{lem:power2} directly implies the convergence of the iteration~\eqref{eq:iter:power2}. In particular, we have $v^j = u^j / \mu^{j-1} \to \ulim/\mulim =: \vlim$ in $\V$. Thus, the limit is not normalized but rather satisfies 
\[
  \Vert \vlim\Vert
  = \frac{\Vert \ulim \Vert}{|\mulim|} 
  = \frac{1}{\mulim}.
\] 
%
\subsubsection{Commutativity of discretization and power iteration}
The convergence of the power method in the operator case directly leads to the question whether the application of the power method and the spatial discretization commute. 
%
If we discretize the shifted eigenvalue problem~\eqref{eq:evp:op:shift} by finite elements, then we obtain a system of the form 
\[
  Kq = \mu\, M q.
\]
Therein, $q \in \C^n$ encodes a finite-dimensional approximation of the eigenfunction $u \in \V$, e.g., the coefficients w.r.t.~a finite element basis. Since we have included the boundary conditions in the space $\V$, we assume that the stiffness matrix $K \in \C^{n,n}$ and the mass matrix $M \in \C^{n,n}$ are Hermitian and positive definite. With $A := M^{-1}K$ the discrete system is equivalent to the eigenvalue problem $Aq = \mu q$. 
Seeking for the smallest eigenvalue, we apply the {\em inverse power method} with normalization, i.e., 
\begin{align}
\label{eq:firstLinearize}
  q^j 
  = A^{-1} \tilde q^{j-1} 
  = A^{-1} \frac{q^{j-1}}{\Vert q^{j-1}\Vert_Z} 
  = K^{-1}M \frac{q^{j-1}}{\Vert q^{j-1}\Vert_Z},
\end{align}
where $Z$ is a symmetric and positive definite matrix and $\Vert q\Vert_Z := (q^T Zq)^{1/2}$ the corresponding norm. As approximation for the eigenvalue we consider $\mu^j := 1 / \Vert q^j\Vert_Z$. The starting vector is denoted by $q^0$. We emphasize that the iteration converges despite of the choice of the norm as long as $q^0$ contains a non-zero component in direction of the first eigenvector.

%
We now consider the spatial discretization of~\eqref{eq:iter:power2}, i.e., we first apply the inverse power method to the PDE eigenvalue problem~\eqref{eq:evp:op:shift} and then discretize. We emphasize that this allows a different discretization scheme in each iteration step and thus, adaptivity. If we consider, however, the same spatial mesh as before in each iteration step, then the same matrices $K$ and $M$ appear and we get 
\[
  K q^j 
  = M \tilde q^{j-1}
  = M \frac{q^{j-1}}{\Vert q^{j-1} \Vert_M}.
\]
Note that the applied normalization is here w.r.t the $M$-norm, since this corresponds to the $L^2$-inner product in the infinite-dimensional case. Thus, the resulting iteration equals~\eqref{eq:firstLinearize} if we choose the normalization matrix $Z=M$. 
%
%
\subsection{A Krylov subspace method}
The natural extension of the inverse power method in order to approximate several eigenvalues is a subspace iteration, cf.~\cite[Ch.~5]{Saa11}. This includes several starting functions, for which a power iteration is applied, and an additional orthogonalization step is performed. The computation of several eigenvalues is also of interest in the calculation of band-gaps of a photonic crystal.
Here, we consider the Arnoldi method in the operator setting. For this, we need an extension of the Krylov subspaces used in numerical linear algebra. 
%
\subsubsection{Krylov spaces}
Krylov subspaces play a crucial role for iterative eigenvalue computations, see, e.g., \cite[Ch.~6.1]{Saa11}. In order to generalize these methods to the PDE setting, we need Krylov subspaces for general Hilbert spaces~\cite{GueHS14}.

Let $u^0$ be a function in $\V$, e.g., an initial guess for the power method in Section~\ref{sect:convHerm:power}. With this, we define the {\em Krylov subspace}  
\begin{align}
\label{def_krylov}  
  \krylov_\beta^m(u^0) 
  := \sspan \big\{ u^0,\ \calAb^{-1}\calI u^0,\ \dots,\ (\calAb^{-1}\calI)^{m-1} u^0  \big\} \subseteq \V.
\end{align}
Obviously, this defines a closed subspace of $\V$. We emphasize that $\krylov_\beta^m(u^0)$ is spanned -- as in the finite-dimensional case -- by the iterates of the power method. To see this, note that $(\calAb^{-1}\calI)^{j} u^0$ equals the corresponding iterate of the power method up to a multiplicative constant. Thus, if we denote the sequence resulting from the modified inverse power method~\eqref{eq:iter:power2} by~$u^j_\text{pow}$, then we have
\[
  \krylov_\beta^m(u^0) 
  = \sspan \big\{ u^0,\ u^1_\text{pow},\ \dots,\ u^{m-1}_\text{pow} \big\}. 
\]
%
%
\subsubsection{Arnoldi's method}
We translate the Arnoldi algorithm from the matrix setting~\cite[Ch.~6.2]{Saa11} to the present operator case. 
A similar infinite-dimensional Arnoldi algorithm is considered in~\cite{JarMM12}. 
Let $\{v_1, \dots, v_m\}$ denote a basis of $\krylov_\beta^m(u^0)$, e.g., obtained by a Gram-Schmidt orthogonalization process. Then, the new iterate of the Arnoldi method is given by $u^m := \sum_{j=1}^m \zeta_j v_j \in \krylov_\beta^m(u^0)$, whose coefficients $\zeta := [\zeta_1, \dots, \zeta_m]^T$ and corresponding $\mu^m\in\R$ are derived by the Galerkin projection
\[
  \sum_{j=1}^m \zeta_j\, \ab(v_j,v_k) 
  = \mu^m \sum_{j=1}^m \zeta_j\, (v_j, v_k) 
  \qquad\text{for } k=1,\dots, m. 
\]
This is equivalent to the $m\times m$ eigenvalue problem $\tilde K \zeta = \mu^m \tilde M \zeta$, for which we search for the smallest eigenpair. Here, $\tilde K$ and $\tilde M$ are stiffness and mass matrices restricted to the Krylov basis, i.e., 
\[
  \tilde K_{kj} := \ab(v_j,v_k), \qquad
  \tilde M_{kj} := (v_j,v_k).
\]
Thus, the extra costs going from the power method to the Arnoldi method are identical as in the finite-dimensional setting, namely the solution of a small (but dense) eigenvalue problem. Note that the resulting approximation of the eigenvalue, namely~$\mu^m$, is again the Rayleigh quotient of the iterate $u^m$.
\begin{remark}
The commutativity result for the inverse power method does not carry over to the Arnoldi method, since the discretization of the operator $(\calAb^{-1}\calI)^{j}$ leads, in general, not to the matrix $(K^{-1}M)^{j}$. 
\end{remark}
The obtained pair of the Arnoldi method provides the best-approximation within the Krylov subspace $\krylov_\beta^m(u^0)$ in the sense that 
\[
  \langle \Res(u^m, \mu^m), v \rangle = 0 
\]
for all $v \in \krylov_\beta^m(u^0)$ and with the residual defined by $\Res(u, \mu) \coloneqq \calAb u -\mu \calI u \in\V^*$ for any pair $(u, \mu)\in \V\times \C$. Obviously, this implies that the Arnoldi method is superior to the inverse power method. The norm of the residual may also be used as an error estimator as it can be transformed to the backward error. Thus, small residuals indicate good approximations of the eigenpair~\cite[Ch.~4]{Mie11}.

The gain of the Arnoldi method can also be characterized in terms of the Courant {\em min-max principle} in Hilbert spaces, cf.~\cite{Cou20} or~\cite[Ch.~1]{WeiS72}. 
This means that the $\ell$th eigenvalue is defined by minimizing over all $\ell$-dimensional subspaces of $\V$, i.e., 
\[
  \lambda_\ell + \beta
  = \mu_\ell 
  = \min_{\substack{ \V^{(\ell)} \subset\, \V,\\ \dim \V^{(\ell)} =\, \ell}} \max_{\ v\, \in\, \V^{(\ell)}} \frac{a_\beta(v,v)}{(v,v)}.
\]
With this, one shows that computed approximations of the eigenvalues are larger than the exact ones. With the same arguments one can show that the Arnoldi iteration provides better approximations than the inverse power method and thus, converges as well. More precisely, the Arnoldi method computes an approximation $\mu^m_\text{Arnoldi}$ satisfying 
\[
  \mu_1
  \le \mu^m_\text{Arnoldi} 
  = \min_{\substack{\V^{(1)} \subset\, \krylov_\beta^m(u^0),\\ \dim \V^{(1)} = 1}} \max_{\ v\in \V^{(1)}} \frac{a_\beta(v,v)}{(v,v)}
  \le \max_{\ v\, \in\, \sspan\{ u^{m-1}_\text{pow} \} } \frac{a_\beta(v,v)}{(v,v)} 
  = \mu^{m-1}_\text{pow} .
\]
Note that the inequality holds, since $u^{m-1}_\text{pow}$ is an element of the Krylov subspace $\krylov_\beta^m(u^0)$ and thus, $\sspan\{ u^{m-1}_\text{pow} \}$ is a particular one-dimensional subspace. 
\section{The Inverse Power Method for a Nonlinear Model Problem}\label{sect:nonlinearHermit}
In this section, we consider an extension of the TM mode from Example~\ref{exp:TM}, in which the relative electric permittivity~$\eps$ depends on the frequency and thus, the eigenvalue. This then leads to a nonlinear eigenvalue problem. Assuming that $\eps$ is a rational function in the frequency, we are able to reformulate the eigenvalue problem to a linear one satisfying Assumptions~\ref{ass_spaces} and~\ref{ass_a}. 
This linearization, however, leads to a lack of compactness, which calls for a novel convergence analysis of the inverse power method. 
This then leads to a slightly weaker convergence result, compared to Section~\ref{sect:convHerm}. 
%
%
\subsection{A simplified Drude-Lorentz model}
We consider a photonic crystal made up of two different materials. For this, we decompose the computational domain $\Omega$ into two subdomains $\Omega=\Omega_1\cup\Omega_2$, each representing one material. We define the corresponding indicator functions on $\Omega_j$ by $\chi_j\colon \Omega\to \{1,0\}$, $j=1,2$. On both subdomains we assume the relative electric permittivity to be constant in space and thus, 
\begin{equation}
\label{eq:distributionPermittivity}
	\eps(x,\omega) = \eps_1(\omega)\chi_1(x)+\eps_2(\omega)\chi_2(x).
\end{equation}
For the sake of brevity, the material contained in $\Omega_1$ is assumed to be linear, i.e., we set the relative permittivity in this subdomain to a constant $\eps_1(\omega) \equiv \alpha_1>0$. For the frequency dependence in the second material we consider a simplified version of the Drude-Lorentz model, see e.g.~\cite{LuoL10} or \cite[Ch.~7.5]{Jac99}. 
More precisely, we assume the electric permittivity to be of the form 
\begin{equation}
\label{eq:DrLoModel}
	\eps_2(\omega) =
	\alpha_2 + \sum\limits_{\ell=1}^L \frac{\xi_\ell^2}{\eta_\ell^2-\omega^2}
\end{equation}
with a positive constant $\alpha_2 > 0$ and real parameters $\eta_\ell, \xi_\ell$ such that $\eta_\ell^2, \xi_\ell^2 \ge 0$. This corresponds to the 'lossless' case considered in~\cite{Eng10}. In order to stay bounded, we only consider~$\omega$ away from the poles given by~$\eta_\ell$. Since $\omega$ appears in~\eqref{eq:DrLoModel} only squared, we introduce $\lambda\coloneqq \omega^2$ and obtain 
\[ 
	\lambda\, \eps_2(\lambda)
	= \lambda\, \alpha_2 + \sum_{\ell=1}^L \frac{\lambda\,  \xi_\ell^2}{\eta_\ell^2-\lambda}	
	= \lambda\, \alpha_2 - \Xi + \sum\limits_{\ell=1}^L \frac{\xi_\ell^2\eta_\ell^2}{\eta_\ell^2-\lambda} 
\]
with 
\[
	\Xi 
	:= \sum_{\ell=1}^L \xi_\ell^2
	\ge 0.
\]
Due to the inclusion of $\Xi$, the fractional terms are in a strictly proper form, i.e., the degree of the polynomial in terms of $\lambda$ in the numerator is strictly smaller than the degree in the denominator. All in all, this leads to the nonlinear eigenvalue problem 
\begin{align}
\label{eq:nonlinEVP:realCase}
	-  \nabla_\k \cdot \nabla_\k u_\k(x) &+ \Xi\, \chi_2(x)u_\k(x) \notag \\
	&\quad = \lambda\, \big( \alpha_1\chi_1(x)+\alpha_2\chi_2(x) \big)u_\k(x) +  \sum\limits_{\ell=1}^L \frac{\xi_\ell^2\eta_\ell^2}{\eta_\ell^2-\lambda} \chi_2(x)u_\k(x)  
\end{align}
with $\nabla_\k$ denoting again the shifted gradient introduced in Example~\ref{exp:TM}. Our aim is to turn this into a linear eigenvalue problem in order to apply the iterative methods from the previous section. For this, we follow the ideas presented in~\cite{SuB11, EffKE12, Eff13}, which consider the corresponding finite-dimensional case. The main clue is to rewrite the sum, which may be regarded as a transfer function, by means of a realization, i.e., 
\begin{equation}
\label{eq:realization:realCase}
	\sum\limits_{\ell=1}^L \frac{\xi_\ell^2\eta_\ell^2}{\eta_\ell^2-\lambda}
	= \bVec^\ast\big(\AMat-\lambda\IMat \big)^{-1}\bVec.
\end{equation}
Due to the simple structure of the permittivity, we can directly read off the vector  $\bVec=[\xi_1\eta_1, \ldots, \xi_L\eta_L]^T \in \mathbb{\R}^L$ and $\AMat\in \R^{L\times L}$ as the diagonal (and thus Hermitian) matrix with $\AMat_{j,j}=\eta_j^2$ for $j=1,\ldots,L$. By $\IMat\in \R^{L\times L}$ we denote the identity matrix. 
\begin{remark}
For other models of the permittivity, the choice (and even the dimension) of $\AMat$ and $\bVec$ may not be as straightforward. Further, the identity matrix $\IMat$ may be replaced by another positive Hermitian matrix. Proper realizations for such cases may be found using the techniques discussed in~\cite{SchUBG18}.
\end{remark}
%
%
\subsection{Spaces and embeddings}
For the weak formulation and linearization of the eigenvalue problem~\eqref{eq:nonlinEVP:realCase} we introduce several function spaces. First, we introduce 
\[
  H\coloneqq L^2(\Omega), \qquad
  V\coloneqq \Hper, \qquad 
  \LOmgII\coloneqq\{ v\in H\ |\ v \text{ vanishes on } \Omega_1 \}.
\] 
These spaces form Hilbert spaces equipped with the inner products 
\begin{align*}
  (u, v)\coloneqq(u,v)_H\coloneqq\int_{\Omega} u\overline{v} \dx,\quad
  (u, v)_V\coloneqq (u,v)+(\nabla u, \nabla v),\quad
  (u, v)_\LOmgII\coloneqq \int_{\Omega_2} u\overline{v} \dx 
\end{align*}
for $u$ and $v$ in the respective spaces $H$, $V$, or $\LOmgII$. Second, we define the product spaces 
\[
  \cH\coloneqq H \otimes \LOmgII^L, \qquad 
  \V\coloneqq V \otimes \LOmgII^L
\]
with $\LOmgII^L := \LOmgII \otimes \dots \otimes \LOmgII$. 
Also these product spaces are Hilbert spaces. In $\cH$ we consider the inner product 
\[
  (\zVec_1, \zVec_2)_\cH
  \coloneqq (u, v) + (\xVec, \yVec)_{\LOmgII^L} 
  \coloneqq (u, v) + \sum_{\ell=1}^L\, (x_{\ell}, y_{\ell})_{\LOmgII}
\]
for $\zVec_1 = [u; \xVec]$, $\zVec_2 = [v; \yVec] \in\cH$ consisting of $u,v\in H$ and $\xVec, \yVec \in \LOmgII^L$ with $\xVec = [x_1; \dots; x_L]$, $\yVec = [y_1; \dots; y_L]$. 
Note that we use here the notation $[u; \xVec] := [u, \xVec^T]^T$. Analogously, we define an inner product in~$\V$ by replacing $(u, v)$ by $(u, v)_V$. We also define the corresponding norms $\Vert \zVec \Vert_\cH^2 := (\zVec, \zVec)_\cH$ and $\Vert \zVec \Vert_\V^2 := (\zVec, \zVec)_\V$.
\begin{remark}
\label{rem_notComp}
Although the embedding $V \hook H$ is compact, the embedding $\V \hook \cH$ is not. This is due to the fact that the identity operator is not compact in infinite dimensions.  
\end{remark}
For the weak formulation we further need several embeddings. First, $\calI\colon V\to V^*$ denotes the continuous inclusion map defined by the Gelfand triple $V$, $H$, $V^\ast$, cf.~Section~\ref{sect:formulation:op}. Second, we define the extension of the mapping $u \mapsto (u, \cdot\, )_\LOmgII$ as $\calI_2\colon V\to V^*$, i.e.,  
\[
  u 
  \mapsto \langle \calI_2 u, \cdot\, \rangle_{V^\ast, V}
  := (\chi_2 u,\,  \chi_2\, \cdot\, )_\LOmgII
  = \int_{\Omega_2} u\, \overline{\, \cdot\,  } \dx.
\]
In the same manner, but based on the indicator function $\chi_1$, we define $\calI_1\colon V\to V^*$. The weighted combination of these two embeddings yields $\calIa\colon V\to V^*$, given by 
\[
  \calIa\coloneqq \alpha_1 \calI_1 + \alpha_2 \calI_2.
\]
Finally, we introduce the embedding~$\oI\colon \LOmgII\to V^\ast$. For $u\in \LOmgII$ this is defined by 
\[
  u 
  \mapsto \langle \oI u, \cdot\, \rangle_{V^\ast, V}
  := (u,\,  \chi_2\, \cdot\, )_\LOmgII
  = \int_{\Omega_2} u\, \overline{\, \cdot\,  } \dx .
\]
The corresponding dual operator $\oI^*\colon V \to \LOmgII^*$ satisfies for $v\in V$ and $u \in \LOmgII$ that 
\[
  \langle \oI^* v, u\rangle_{\LOmgII^*, \LOmgII}
  = \langle v, \oI u\rangle_{V, V^*}
  = (\chi_2 v, u)_\LOmgII.
\]
\begin{lemma}
\label{lem_I2I2}
With the Riesz isomorphism $j_\LOmgII\colon \LOmgII\to \LOmgII^\ast$, the introduced embeddings satisfy the identity $\oI\, j_\LOmgII^{-1}\, \oI^\ast=\calI_2\colon V\to V^*$. 
\end{lemma}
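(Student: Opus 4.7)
The plan is to verify the operator identity by testing both sides against an arbitrary $w \in V$, reducing the claim to an equality of conjugate-linear functionals on $V$. Since all three operators $\oI$, $j_\LOmgII^{-1}$, $\oI^{\ast}$ are defined purely through their action via the $\LOmgII$-inner product, the proof should amount to unpacking the definitions in the correct order, with the main care needed to track which side of each pairing is conjugate-linear.

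First, I would fix $v \in V$ and identify $\oI^{\ast} v \in \LOmgII^{\ast}$. From the definition given in the paper, $\oI^{\ast} v$ is the functional on $\LOmgII$ sending $u$ to $(\chi_2 v, u)_\LOmgII$. Next, I would apply the Riesz isomorphism $j_\LOmgII^{-1}$: by definition of $j_\LOmgII$, the element $w^{\ast} := j_\LOmgII^{-1}(\oI^{\ast} v) \in \LOmgII$ is the unique function satisfying $(w^{\ast}, u)_\LOmgII = \langle \oI^{\ast} v, u\rangle_{\LOmgII^{\ast}, \LOmgII}$ for all $u \in \LOmgII$. Comparing with the formula for $\oI^{\ast} v$ shows $w^{\ast} = \chi_2 v$, i.e., the restriction of $v$ to $\Omega_2$, viewed as an element of $\LOmgII$.

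Finally, I would apply $\oI$ to $w^{\ast} = \chi_2 v$ and compute the resulting functional on $V$. By the definition of $\oI$, for any test function $w \in V$,
\[
	\langle \oI(\chi_2 v), w\rangle_{V^{\ast}, V}
	= (\chi_2 v,\, \chi_2 w)_\LOmgII
	= \int_{\Omega_2} v\, \overline{w} \dx.
\]
On the other hand, the paper's definition of $\calI_2$ gives exactly the same expression $\langle \calI_2 v, w\rangle_{V^{\ast}, V} = \int_{\Omega_2} v\, \overline{w} \dx$. Since $v \in V$ and $w \in V$ were arbitrary, the two operators agree on $V$, which is the claimed identity.

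There is no real analytic obstacle here; the statement is a pure definition chase. The only point requiring attention is the convention that $V^{\ast}$ and $\LOmgII^{\ast}$ denote spaces of conjugate-linear functionals and that $j_\LOmgII$ represents these via the $\LOmgII$-inner product in its standard orientation, so that the conjugations in the three pairings cancel consistently and no spurious complex conjugation survives in the final formula.
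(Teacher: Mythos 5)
Your proof is correct and follows essentially the same route as the paper's: test the operator identity against arbitrary elements of $V$ and unwind the definitions of $\oI^\ast$, $j_\LOmgII^{-1}$, and $\oI$ in sequence. The only difference is presentational — you isolate the intermediate fact $j_\LOmgII^{-1}(\oI^\ast v) = \chi_2 v$ as a named step, whereas the paper runs the computation in a single chain of equalities.
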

\begin{proof}
Consider $u,v\in V$. Then, the claimed identity can be seen by 	
\[
	\langle \oI\, j_\LOmgII^{-1}\, \oI^\ast u, v \rangle_{V^\ast, V}
	= \langle \oI^\ast u, \chi_2 v \rangle_{X^\ast, X}
	= (\chi_2 u, \chi_2 v)_\LOmgII= \langle \calI_2 u, v \rangle_{V^\ast, V}. \qedhere
\]
\end{proof}
\begin{remark}
In the remainder of this paper, we often omit to write the Riesz isomorphisms $j_H\colon H\to H^\ast$ or $j_\LOmgII\colon \LOmgII\to \LOmgII^\ast$ if their presence is clear from the context. Thus, we may write $\oI\oI^\ast = \calI_2$.
\end{remark}
%
%
\subsection{Weak formulation and linearization}
This subsection is devoted to the transformation of the nonlinear eigenvalue problem into a linear one by the introduction of new variables. Thus, we aim to write~\eqref{eq:nonlinEVP:realCase} in the form    
\[
	\bigAMat \zVec = \lambda\, \bigIMat \zVec.
\]
Based on the proper form of the permittivity given in~\eqref{eq:realization:realCase}, we obtain the following weak form of the eigenvalue problem. For a given (and fixed) wave vector~$\k \in \calK$, find a non-trivial pair~$(u_\k, \lambda) \in V\times \R$ such that
\begin{equation}
\label{eq:nonlinearWeak:realCase}
	\Ak u_\k + \Xi\,  \calI_2 u_\k 
	= \lambda\, \calI_{\alpha} u_\k + \bVec^\ast\big(\AMat-\lambda\IMat \big)^{-1}\bVec\,  \calI_2 u_\k.
\end{equation}
We emphasize that this equation is stated in $V^*$. The operator~$\Ak\colon V\to V^*$ denotes the weak form of the shifted Laplacian, cf.~Example~\ref{exp:TM}. For the first order formulation of \eqref{eq:nonlinearWeak:realCase} we introduce a new variable 
\begin{align}
\label{def:xy:realCase}
  \xVec 
  \coloneqq  (\AMat - \lambda\IMat)^{-1}\bVec\, \oI^\ast u_\k \in \LOmgII^{L}.
\end{align}
Note that this includes a hidden application of $j_\LOmgII^{-1}$. With Lemma~\ref{lem_I2I2} this leads to a linear eigenvalue problem where we search for a pair $(\zVec, \lambda)$ with $\zVec := [u_\k; \xVec]\in \calV$ such that 
\begin{equation}
\label{eq:linearizedEVP:realCase}
	\left[\begin{array}{ccc}
	\Ak + \Xi\, \calI_2 & - \oI\bVec^* \\
	- \bVec\, \oI^* & \AMat\, j_\LOmgII  \end{array}\right] \zVec
	= \lambda \left[\begin{array}{ccc} \calIa & \\
		& \IMat\, j_\LOmgII \end{array}\right] \zVec.
\end{equation}
This formulation consists of two equations stated in the dual spaces of~$V$ and~$\LOmgII^L$, respectively. The Riesz isomorphism~$j_\LOmgII$ should be understood as a componentwise application.
\begin{remark}
The operator on the left-hand side of the first order eigenvalue problem~\eqref{eq:linearizedEVP:realCase} has a generalized saddle point structure. 
\end{remark}	
\begin{lemma}
The eigenvalue problems~\eqref{eq:nonlinearWeak:realCase} and~\eqref{eq:linearizedEVP:realCase} are equivalent. This means that an eigenpair $(u_\k, \lambda)$ of~\eqref{eq:nonlinearWeak:realCase} defines a solution of \eqref{eq:linearizedEVP:realCase} by $([u_\k; \xVec], \lambda)$ with $\xVec$ defined as in~\eqref{def:xy:realCase} and vice versa. 
\end{lemma}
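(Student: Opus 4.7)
The plan is to establish the two directions separately, with the core of the argument being Lemma~\ref{lem_I2I2}, which allows one to convert the composition $\oI\, j_\LOmgII^{-1}\, \oI^*$ back into $\calI_2$.

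\emph{Forward direction.} Suppose $(u_\k, \lambda) \in V\times\R$ solves \eqref{eq:nonlinearWeak:realCase}. Define $\xVec \in \LOmgII^L$ via \eqref{def:xy:realCase}; note that this formula requires $\lambda \neq \eta_\ell^2$ for every $\ell$, which is guaranteed by our standing assumption that frequencies stay bounded away from the poles. Set $\zVec := [u_\k;\xVec]$. I first verify the second row of \eqref{eq:linearizedEVP:realCase}: apply $j_\LOmgII$ to the defining identity for $\xVec$ and multiply both sides by $(\AMat-\lambda\IMat)$ (acting componentwise) to obtain $(\AMat-\lambda\IMat)\, j_\LOmgII \xVec = \bVec\, \oI^* u_\k$, which rearranges precisely into the second row. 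Next I verify the first row: from the definition of $\xVec$,
\[
	\oI\bVec^*\xVec
	= \bVec^*(\AMat-\lambda\IMat)^{-1}\bVec\; \oI\, j_\LOmgII^{-1}\, \oI^* u_\k
	= \bVec^*(\AMat-\lambda\IMat)^{-1}\bVec\; \calI_2 u_\k,
\]
where the second equality is exactly Lemma~\ref{lem_I2I2} (the scalars commute with the operators since $\bVec$ is constant and $\AMat$ is a scalar matrix for each component). Substituting this back, the first row of \eqref{eq:linearizedEVP:realCase} reduces to \eqref{eq:nonlinearWeak:realCase}.

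\emph{Backward direction.} Suppose $([u_\k;\xVec], \lambda)$ solves \eqref{eq:linearizedEVP:realCase}. The second row reads $(\AMat-\lambda\IMat)\, j_\LOmgII \xVec = \bVec\, \oI^* u_\k$. Since $\AMat$ is diagonal with entries $\eta_\ell^2$ and $\lambda \neq \eta_\ell^2$, the matrix $(\AMat-\lambda\IMat)$ is invertible, so applying $j_\LOmgII^{-1}(\AMat-\lambda\IMat)^{-1}$ recovers the formula \eqref{def:xy:realCase} for $\xVec$. Substituting this expression for $\xVec$ into the first row and again invoking Lemma~\ref{lem_I2I2} to collapse $\oI\, j_\LOmgII^{-1}\, \oI^*$ into $\calI_2$ yields~\eqref{eq:nonlinearWeak:realCase}. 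Finally, nontriviality: if $u_\k=0$ then the second row forces $(\AMat-\lambda\IMat)\, j_\LOmgII \xVec = 0$, so $\xVec=0$ and $\zVec$ would be trivial; hence $u_\k\neq 0$ whenever $\zVec\neq 0$.

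There is no real obstacle here; the only mildly delicate point is keeping track of which object lives in~$\LOmgII$ and which lives in $\LOmgII^*$ (equivalently, being careful about the implicit Riesz isomorphism flagged in the remark after Lemma~\ref{lem_I2I2}), and observing that the invertibility of $\AMat-\lambda\IMat$ needed for the backward direction is precisely the pole-avoidance condition already imposed on the admissible frequency range.
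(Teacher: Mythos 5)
Your proof is correct and takes essentially the same route as the paper: read off $\xVec$ from the second block row, substitute into the first, and collapse $\oI\, j_\LOmgII^{-1}\, \oI^*$ into $\calI_2$ via Lemma~\ref{lem_I2I2}. The paper presents this more tersely as a single argument rather than two directions, but the underlying computation is identical; your extra remarks on pole-avoidance and nontriviality are sound but not needed for the statement as formulated.
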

\begin{proof}
The second block row of~\eqref{eq:linearizedEVP:realCase} is given by $(\AMat-\lambda\IMat)j_\LOmgII \xVec = \bVec\, \oI^\ast u_\k$, which implies \eqref{def:xy:realCase}. Substituting this formula for $\xVec$ into the first block row yields together with $\oI\oI^\ast=\calI_2$ from Lemma~\ref{lem_I2I2} that \eqref{eq:linearizedEVP:realCase} is indeed equivalent to the nonlinear eigenvalue problem~\eqref{eq:nonlinearWeak:realCase}. 
\end{proof}
Defining $\bigAMat, \bigIMat\colon \V\to\V^*$ in an obvious manner, we can write~\eqref{eq:linearizedEVP:realCase} in the form $\bigAMat \zVec = \lambda\, \bigIMat \zVec$. The sesquilinear form corresponding to~$\bigIMat$ reads $\bigIForm\colon \calV\times \calV\to \C$, 
\begin{align*}
	\bigIForm(\zVec_1, \zVec_2)
	:= \big\langle \calIa u, v \big\rangle_{V^*,V} 
	+ \big(\xVec_1, \xVec_2 \big)_{\LOmgII^L}
\end{align*}
for $\zVec_1 = [u; \xVec_1]$, $\zVec_2 = [v; \xVec_2] \in \calV$. Note that we may also consider~$\bigIForm$ as a sesquilinear form mapping from $\calH\times \calH$ to $\C$. We now show that $\bigIForm$ defines an inner product in $\calH$. 
\begin{lemma}
\label{lem:bigIForm:first}	
The sesquilinear form $\bigIForm\colon \calH\times\calH\to \C$, defines an inner product in $\calH$. 
\end{lemma}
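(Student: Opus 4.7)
The plan is to unfold the definition of $\bigIForm$ into a concrete sum of weighted $L^2$-integrals and then verify each of the three defining properties of an inner product in $\calH$ directly. Since the operators $\calI_1,\calI_2$ act on test functions by $\langle \calI_j u, v\rangle_{V^*,V} = \int_{\Omega_j} u\overline{v}\dx$, these pairings extend verbatim to $u,v\in H$. Thus for $\zVec_s = [u_s;\xVec_s]\in\calH$ with $\xVec_s=[x_{s,1};\dots;x_{s,L}]$, $s=1,2$, we may write
\begin{equation*}
  \bigIForm(\zVec_1,\zVec_2)
  = \alpha_1\int_{\Omega_1} u_1\overline{u_2}\dx
  + \alpha_2\int_{\Omega_2} u_1\overline{u_2}\dx
  + \sum_{\ell=1}^L \int_{\Omega_2} x_{1,\ell}\overline{x_{2,\ell}}\dx.
\end{equation*}

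With this explicit representation, the verification is essentially mechanical. First, sesquilinearity (linear in the first argument, conjugate linear in the second) follows from the linearity of the integral and the definition of complex conjugation. Second, Hermitian symmetry $\bigIForm(\zVec_1,\zVec_2) = \overline{\bigIForm(\zVec_2,\zVec_1)}$ follows from the fact that $\alpha_1,\alpha_2\in\R$ and from swapping the order of multiplication under conjugation in each integrand. Third, for positive semi-definiteness, setting $\zVec_1=\zVec_2=\zVec$ gives
\begin{equation*}
  \bigIForm(\zVec,\zVec)
  = \alpha_1\|u\|_{L^2(\Omega_1)}^2
  + \alpha_2\|u\|_{L^2(\Omega_2)}^2
  + \sum_{\ell=1}^L \|x_\ell\|_{\LOmgII}^2 \geq 0,
\end{equation*}
which is a sum of nonnegative terms precisely because $\alpha_1,\alpha_2>0$.

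The remaining check is strict positivity, which is where the assumptions on the subdomains play a role: if $\bigIForm(\zVec,\zVec)=0$, each nonnegative summand must vanish, so $u=0$ almost everywhere on $\Omega_1$ and on $\Omega_2$, hence $u=0$ in $H$; likewise $x_\ell=0$ in $\LOmgII$ for every $\ell$, so $\xVec=0$ in $\LOmgII^L$, giving $\zVec=0$ in $\calH$. The only subtle point worth flagging explicitly is that $\bigIForm$ was originally written using the duality pairing $\langle\calIa u,v\rangle_{V^*,V}$, so one must first justify that this pairing extends continuously from $\V\times\V$ to $\calH\times\calH$ via the integral representation; this is immediate once one observes that the $\calI_j$ were defined precisely through $L^2$-integrals over $\Omega_j$, which require no Sobolev regularity. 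No obstacle beyond this bookkeeping is expected.
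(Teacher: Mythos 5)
Your proof is correct and follows essentially the same route as the paper's: unfold $\bigIForm$ into explicit weighted $L^2$-integrals over $\Omega_1$, $\Omega_2$, and then check the inner-product axioms. The only distinction is in the final positivity step: the paper does not argue by vanishing of summands but instead records the coercivity bound $\bigIForm(\zVec,\zVec)\geq \aalpha\,\Vert u\Vert^2 + (\xVec,\xVec)_{\LOmgII^L}\geq \min(\aalpha,1)\,\Vert\zVec\Vert_\calH^2$ with $\aalpha=\min\{\alpha_1,\alpha_2\}>0$, which yields positive definiteness immediately and, as a slight bonus, norm equivalence between $\bigIForm(\cdot,\cdot)^{1/2}$ and $\Vert\cdot\Vert_\calH$; your kernel-is-trivial argument is equally valid for the stated lemma.
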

\begin{proof}
Obviously, $\bigIForm$ is Hermitian and sesquilinear. Further, for any $\zVec=[u; \xVec] \in \calH$ and $\aalpha := \min\{\alpha_1, \alpha_2\}>0$, it holds that
\[
	\bigIForm(\zVec,\zVec)
	\geq \aalpha\, \Vert u\Vert^2 
	+ \big(\xVec, \xVec \big)_{\LOmgII^L}
	\geq \min(\aalpha, 1)\, \Vert \zVec\Vert_\calH^2, 
\]
which proves its positivity.
\end{proof}
%
%
\subsection{Shifted eigenvalue problem}
In the previous subsection, the nonlinear eigenvalue problem was brought into the form $\bigAMat \zVec = \lambda\, \bigIMat \zVec$. In order to use the framework presented in Section~\ref{sect:formulation}, we need to apply a shift to gain positivity of the differential operator. Recall from the discussion of the linear case that the operator $\Ak$ is not elliptic. From Lemma~\ref{lem:betaTM} we know, however, that $\Aktb := \Ak + \tbeta\calI$ is elliptic for every $\tbeta>0$. 

For fixed $\tbeta>0$ and $\aalpha := \min\{\alpha_1, \alpha_2\}$ we introduce $\beta := \tbeta/\aalpha$ and shift the linearized eigenvalue problem by $\beta\bigIMat\zVec$. This then provides the appearance of $\Aktb$. More precisely, we obtain the shifted operator
\begin{equation}
\label{eq:linearizedEVP:realCase:shift}
  \bigAMat_\beta
  := \bigAMat + \beta\, \bigIMat
  = \left[\begin{array}{cc}
  \Ak + \Xi\, \calI_2 + \beta\, \calIa & - \oI\bVec^* \\
  - \bVec\, \oI^\ast & \AMat + \beta\, \IMat
  \end{array}\right]. 
\end{equation}
Due to $\calI = \calI_1+\calI_2$ we have 
\begin{align*}
  \Ak + \Xi\, \calI_2 + \beta\, \calIa
  &= \Ak + \Xi\, \calI_2 + {\tbeta}\tfrac{\alpha_1}{\aalpha}\calI_1 + {\tbeta} \tfrac{\alpha_2}{\aalpha}\calI_2 \\
  &= \Aktb + \Xi\, \calI_2 + {\tbeta}\big({\aalpha}^{-1}\calIa - \calI \big)
\end{align*}
with $\big({\aalpha}^{-1}\calIa - \calI \big) \ge 0$. The shifted eigenvalue problem has the form $\bigAMat_\beta \zVec = (\lambda + \beta)\, \bigIMat\, \zVec$. Corresponding to the operator $\bigAMat_\beta$, we define $\bigAForm_\beta \colon \calV\times \calV\to \C$ by 
\begin{align*}
  \bigAForm_\beta(\zVec_1, \zVec_2)
  := \big\langle \bigAMat_\beta \zVec_1, \zVec_2\big\rangle  
  = \bigAForm(\zVec_1, \zVec_2) + \beta\, \bigIForm(\zVec_1, \zVec_2)
\end{align*}
for $\zVec_1 = [u; \xVec_1]$, $\zVec_2 = [v; \xVec_2] \in \calV$. 
\begin{lemma}
\label{lem_pos_bigAForm}
The sesquilinear form $\bigAForm\colon \calV\times \calV\to \C$ defined through $\bigAForm(\cdot\, , \cdot) := \langle \bigAMat\, \cdot\, , \cdot\rangle$ satisfies Assumption~\ref{ass_a}.
\end{lemma}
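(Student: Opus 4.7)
The plan is to verify, in turn, the three properties in Assumption~\ref{ass_a}: Hermitian symmetry (which then also gives $\bigAForm(\zVec,\zVec) \in \R$), continuity on $\V\times\V$, and a G{\aa}rding inequality. Writing $\zVec_i = [u_i; \xVec_i]$, the form unfolds into $a_\k(u_1, u_2) + \Xi\,(u_1, u_2)_\LOmgII + (\AMat\xVec_1, \xVec_2)_{\LOmgII^L}$ together with the two cross terms $-(\bVec^\ast\xVec_1, u_2)_\LOmgII$ and $-(\bVec\,\oI^\ast u_1, \xVec_2)_{\LOmgII^L}$ coming from $-\oI\bVec^\ast$ and $-\bVec\oI^\ast$. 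Hermitian symmetry is immediate: each of the three diagonal contributions is Hermitian (from Lemma~\ref{lem:betaTM} for $a_\k$, from the inner-product structure of $\calI_2$, and because $\AMat$ is a real diagonal matrix), and the two cross terms are mutual conjugate-adjoints by construction. Continuity is routine: $a_\k$ is continuous by Lemma~\ref{lem:betaTM}, and the remaining four terms are bounded by Cauchy--Schwarz together with the continuous inclusion $V \hookrightarrow H$.

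The substantive step is the G{\aa}rding inequality. On the diagonal,
\begin{equation*}
\bigAForm(\zVec, \zVec) = a_\k(u, u) + \Xi\,\|u\|_\LOmgII^2 + (\AMat\xVec, \xVec)_{\LOmgII^L} - 2\,\real\,(\bVec^\ast\xVec, u)_\LOmgII,
\end{equation*}
and the whole task is to absorb the indefinite cross term into the two positive ``matrix'' contributions. The key observation is that the compatible structure $\bVec = [\xi_1\eta_1,\ldots,\xi_L\eta_L]^T$ and $\AMat = \diag(\eta_\ell^2)$ coming from the realization~\eqref{eq:realization:realCase} gives, via Cauchy--Schwarz in $\R^L$,
\begin{equation*}
|(\bVec^\ast\xVec, u)_\LOmgII| \le \Xi^{1/2}\,(\AMat\xVec, \xVec)_{\LOmgII^L}^{1/2}\,\|u\|_\LOmgII,
\end{equation*}
after which Young's inequality (Lemma~\ref{lem:Young}) with a parameter $\delta\in(0,1)$ splits $2|(\bVec^\ast\xVec,u)_\LOmgII|$ into a $(\Xi/\delta)\|u\|_\LOmgII^2$ piece---to be absorbed into the $\beta\,\|\cdot\|_\calH^2$ side of the G{\aa}rding estimate via $\|u\|_\LOmgII \le \|u\|_H$---and a $\delta\,(\AMat\xVec,\xVec)_{\LOmgII^L}$ piece, leaving a coercive residual $(1-\delta)(\AMat\xVec,\xVec)_{\LOmgII^L}$ on the $\xVec$-component.

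To conclude, I combine this residual with the elementary bound $(\AMat\xVec,\xVec)_{\LOmgII^L} \ge \eta_{\min}^2\,\|\xVec\|_{\LOmgII^L}^2$, where $\eta_{\min}^2 := \min_\ell\eta_\ell^2 > 0$ under the standing assumption that $\omega$ stays away from the poles $\pm\eta_\ell$ of the Drude--Lorentz model, and apply the G{\aa}rding estimate for $a_\k$ from Lemma~\ref{lem:betaTM} with constants $\tilde\alpha>0$ and $\tilde\beta\ge 0$. Setting $\alpha := \min\{\tilde\alpha,\ (1-\delta)\eta_{\min}^2\} > 0$ and $\beta := \tilde\beta + \Xi\,(1/\delta - 1) \ge 0$, this delivers
\begin{equation*}
\bigAForm(\zVec,\zVec) \ge \alpha\,\|\zVec\|_\calV^2 - \beta\,\|\zVec\|_\calH^2,
\end{equation*}
which is the claim. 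The main obstacle, compared with the pure PDE situation of Lemma~\ref{lem:betaTM}, is the indefiniteness introduced by the off-diagonal blocks; it is resolved precisely because the realization pairs $\bVec$ and $\AMat$ in the compatible form above, which makes the Cauchy--Schwarz bound on the cross term tight enough for the absorption argument to leave room for $\xVec$-coercivity.
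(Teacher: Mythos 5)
Your argument targets the G{\aa}rding inequality by a genuinely different route than the paper's, and in one respect it is sharper. The paper works with the shifted form $\bigAForm_\beta$ throughout, controls the cross term by a termwise Young inequality (producing, among other terms, $(\max_\ell\eta_\ell^2)\,\|\xVec\|_{\LOmgII^L}^2$), and then absorbs that defect with the $\beta\,\|\xVec\|_{\LOmgII^L}^2$ piece of the shift, which forces $\beta > \max_\ell\eta_\ell^2 - \min_\ell\eta_\ell^2$. You instead work with $\bigAForm$ directly and apply Cauchy--Schwarz in $\R^L$, exploiting the compatibility between $\bVec$ and $\AMat=\diag(\eta_\ell^2)$ to obtain $|(\bVec^\ast\xVec,u)_\LOmgII| \le \Xi^{1/2}(\AMat\xVec,\xVec)_{\LOmgII^L}^{1/2}\,\|u\|_\LOmgII$, so that after Young with $\delta\in(0,1)$ the residual $(1-\delta)(\AMat\xVec,\xVec)_{\LOmgII^L}$ stays positive without any help from the shift. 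This is cleaner and removes the lower bound on $\beta$.

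There is, however, a genuine gap in the final absorption step. You set $\alpha := \min\{\tilde\alpha,\ (1-\delta)\eta_{\min}^2\}$ and claim $\eta_{\min}^2 := \min_\ell\eta_\ell^2 > 0$ ``under the standing assumption that $\omega$ stays away from the poles.'' That inference is invalid: the paper assumes only $\eta_\ell^2, \xi_\ell^2 \ge 0$, which permits $\eta_\ell = 0$ (a Drude-type term), and the requirement that $\omega$ avoids the poles $\pm\eta_\ell$ constrains $\omega$, not $\eta_\ell$ --- it is vacuous when $\eta_\ell = 0$. If $\eta_{\min}=0$, your $\alpha$ vanishes and no G{\aa}rding inequality results. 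The fix is to let the shift also act on the $\xVec$-component via the $\|\xVec\|_{\LOmgII^L}^2$ part of $\|\zVec\|_\cH^2$, which is exactly what the paper's proof implicitly does. Concretely, for any $\gamma>0$ add and subtract $\gamma\,\|\xVec\|_{\LOmgII^L}^2$ before splitting into $\V$- and $\cH$-norms, which yields
\begin{equation*}
  \bigAForm(\zVec,\zVec)
  \ge \min\bigl\{\tilde\alpha,\ (1-\delta)\eta_{\min}^2 + \gamma\bigr\}\, \|\zVec\|_\V^2
  - \max\bigl\{\gamma,\ \tilde\beta + \Xi(1/\delta - 1)\bigr\}\, \|\zVec\|_\cH^2,
\end{equation*}
with $\alpha > 0$ even when $\eta_{\min} = 0$. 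With that one-line repair your proof is complete.
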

\begin{proof}
Due to the given structure of $\bigAMat$ and the fact that $\AMat$ is Hermitian, it is easy to see that $\bigAForm$ is continuous and Hermitian. It remains to show that $\bigAForm_\beta$ is positive for some shift~$\beta>0$. For this, we consider $\zVec = [u; \xVec] \in \calV$ and note that 
\[
  \bigAForm_\beta(\zVec, \zVec)	
  \ge \big\langle  \Aktb u + \Xi\, \calI_2 u, u \big\rangle - 2 \real\, \langle \oI\bVec^* \xVec, u \rangle + \big((\AMat + \beta\, \IMat) \xVec, \xVec\big)_{\LOmgII^L}.
\]
The definition of $\bVec=[\xi_1\eta_1, \ldots, \xi_L\eta_L]^T$ yields the estimate 
\begin{align*}
  2\real\, \langle \oI\bVec^* \xVec, u\rangle 
  = 2\sum_{\ell=1}^L\real\, (\xi_\ell\eta_\ell x_\ell,  u)_{\LOmgII}
  &\le 2 \sum_{\ell=1}^L \norm{\eta_\ell x_\ell}_\LOmgII \norm{\xi_\ell u}_\LOmgII \\
  &\le \max \eta^2_\ell\,\Vert\xVec\Vert^2_{\LOmgII^L} + \max \xi^2_\ell\, \Vert u\Vert^2_\LOmgII
\end{align*}
and thus, with $\cell$ denoting the ellipticity constant of $\Aktb$,
\begin{align*}
  \bigAForm_\beta(\zVec, \zVec)	
  \ge \cell\, \Vert u\Vert^2_V + (\Xi-\max \xi^2_\ell)\, \Vert u\Vert^2_\LOmgII + (\min \eta_\ell^2 - \max \eta_\ell^2 + \beta)\, \Vert \xVec \Vert^2_{\LOmgII^L}.
\end{align*}
Assuming $\beta > \max \eta_\ell^2 -\min \eta_\ell^2$, we conclude 
the positivity of $\bigAForm_\beta$.
\end{proof}
%
%
\subsection{Convergence of the inverse power method}
We apply the inverse power method from Section~\ref{sect:convHerm:power} to the shifted eigenvalue problem~$\bigAMat_\beta \zVec = (\lambda + \beta)\, \bigIMat\, \zVec$. For an initial function $\zVec^0 = [u^0; \xVec^0] \in\calV$ we thus consider the iteration 
\begin{align}
\label{eq:linearizedEVP:invIteration}
	\bigAMat_\beta \zVec^j
	= \mu^{j-1}\, \bigIMat\, \tilde \zVec^{j-1}
	\qquad\text{in } \V^*
\end{align}
with the Rayleigh quotient 
\[
	\mu^{j}
	:= \frac{ \bigAForm_\beta(\zVec^{j}, \zVec^{j}) }{ \bigIForm (\zVec^{j}, \zVec^{j})} 
	= \frac{ \Vert \zVec^{j} \Vert_{\beta}^2 }{ \Vert \zVec^{j}\Vert^2} 
	\ge 0.
\]
Note that we use here the norms $\Vert \zVec \Vert_{\beta}^2 = \bigAForm_\beta(\zVec, \zVec)$ and $\Vert \zVec \Vert^2 = \bigIForm(\zVec, \zVec)$ in line with Section~\ref{sect:formulation:setting}. Further, $\tilde \zVec^{j-1}$ denotes the normalization in the $\bigIForm$-norm, i.e., we define $\tilde \zVec^{j-1} := \zVec^{j-1} / \Vert \zVec^{j-1}\Vert$. Note, however, that in the present setting the embedding $\calV \hook \calH$ is not compact and thus, Assumption~\ref{ass_compact} is not satisfied. Hence, Theorem~\ref{thm:power} is not applicable but we are able to show the following result.
\begin{theorem}
Consider the nonlinear eigenvalue problem~\eqref{eq:nonlinearWeak:realCase} for a fixed wave vector~$\k$ and a starting function $u^0 \in V$. Set~$\zVec^0 := [u^0; \xVec^0] \in\calV$ with any~$\xVec^0\in \LOmgII^L$. Then, the power iteration~\eqref{eq:linearizedEVP:invIteration} converges in the sense that there exists a subsequence of $u^j$, which satisfies $u^j \conv \ulim$ in $H$ with $\ulim$ being an eigenfunction of~\eqref{eq:nonlinearWeak:realCase}. 
\end{theorem}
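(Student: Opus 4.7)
The strategy is to follow the skeleton of the proof of Theorem~\ref{thm:power}, with $\bigAForm_\beta$ and $\bigIForm$ taking over the roles of $a_\beta$ and $(\cdot\,,\cdot)$. The principal obstruction, already flagged in Remark~\ref{rem_notComp}, is that the embedding $\calV\hookrightarrow\calH$ fails to be compact, so one cannot upgrade the weak limit $\tilde\zVec^j\wconv\zlimt$ in $\calV$ to a strong limit in $\calH$ as in the original argument. What remains available is the compact embedding $V\hookrightarrow H$ on the first factor, which is exactly strong enough to yield the conclusion $u^j\conv\ulim$ in $H$ along a subsequence, but not strong enough to transfer convergence to the auxiliary variable~$\xVec^j$.

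The first step is to replay the algebraic manipulations of Theorem~\ref{thm:power} verbatim: testing~\eqref{eq:linearizedEVP:invIteration} against $\zVec^j$ and $\tilde\zVec^{j-1}$ and combining via Cauchy--Schwarz produces $\sqrt{\mu^j}\le\sqrt{\mu^{j-1}}$, whence $\mu^j\downarrow\mulim$. Positivity of $\bigAForm_\beta$ from Lemma~\ref{lem_pos_bigAForm}, the normalization $\Vert\tilde\zVec^j\Vert_\calH=1$, and the equivalence of $\Vert\cdot\Vert_\beta$ with $\Vert\cdot\Vert_\calV$ bound $\mu^j=\Vert\tilde\zVec^j\Vert_\beta^2$ below by a positive constant, so $\mulim>0$. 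The same coercivity yields uniform $\calV$-bounds for $\tilde\zVec^j$ and $\zVec^j$. Extracting a subsequence, I obtain $\zVec^j\wconv\zlim=[\ulim;\xlim]$ and $\tilde\zVec^j\wconv\zlimt$ in $\calV$; the compact embedding $V\hookrightarrow H$ upgrades the first factors to $u^j\conv\ulim$ and $\tilde u^j\conv\ulimt$ strongly in $H$.

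Next, I pass to the limit in the variational identity $\bigAForm_\beta(\zVec^j,\vVec)=\mu^{j-1}\bigIForm(\tilde\zVec^{j-1},\vVec)$ for arbitrary $\vVec\in\calV$. Continuity of both sesquilinear forms, weak convergence in $\calV$, and $\mu^{j-1}\to\mulim$ produce the limit identity $\bigAMat_\beta\zlim=\mulim\,\bigIMat\,\zlimt$ in $\calV^*$. This is not yet an eigenvalue equation, because the right-hand side features $\zlimt$ rather than $\zlim$.

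The hard part will be to identify $\zlim$ as a genuine eigenpair. The plan is to extract a further subsequence with $\Vert\zVec^j\Vert\to c$ and to show $c>0$ from a uniform lower bound $\Vert\zVec^j\Vert_\calV\ge\delta>0$. Such a bound should follow from the iteration identity $\zVec^j=\mu^{j-1}\bigAMat_\beta^{-1}\bigIMat\,\tilde\zVec^{j-1}$ combined with $\mulim>0$, $\Vert\tilde\zVec^{j-1}\Vert_\calH=1$, and $\calV$-coercivity of $\bigAMat_\beta$. Once $c>0$ is secured, uniqueness of weak limits forces $\zlimt=\zlim/c$ and the limit identity collapses to $\bigAMat_\beta\zlim=(\mulim/c)\,\bigIMat\,\zlim$, so that $\zlim$ is an eigenvector. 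By the equivalence of~\eqref{eq:nonlinearWeak:realCase} with the linearized formulation~\eqref{eq:linearizedEVP:realCase}, the first component $\ulim$ is then an eigenfunction of~\eqref{eq:nonlinearWeak:realCase} with eigenvalue $\llim=\mulim/c-\beta$. Nontriviality $\ulim\neq 0$ needs a separate check: the degenerate scenario $\ulim=0$, $\xlim\neq 0$ would reduce the second block row to $(\AMat+\beta\IMat)\xlim=(\mulim/c)\xlim$, forcing $\mulim/c$ to be a diagonal entry of $\AMat+\beta\IMat$ and $\xlim$ to be supported in the corresponding component; the first block row then imposes $\oI\bVec^*\xlim=0$, which by the explicit form of~$\bVec$ excludes this case as a spurious branch disconnected from the nonlinear eigenvalue problem.
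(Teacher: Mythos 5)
Your proposal follows the paper's skeleton closely: monotone $\mu^j$, uniform $\calV$-bounds, subsequence extraction, compactness of $V\hookrightarrow H$ on the first factor, passage to the limit in the variational identity, and finally identification of the limit as an eigenpair of the linearized (hence the nonlinear) problem. The paper in fact simply asserts that $\zlimt = c\,\zlim$ without discussing why the scaling factor $\Vert\zVec^j\Vert$ stays bounded away from zero, and it never addresses nontriviality of $\ulim$; your instinct to flag both points is correct and goes beyond what is written in the paper.

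However, the mechanism you offer for securing $c>0$ does not work as stated. You propose to derive a uniform lower bound $\Vert\zVec^j\Vert_\calV \ge \delta > 0$ from the iteration identity and the $\calV$-coercivity of $\bigAMat_\beta$, and then to conclude $\Vert\zVec^j\Vert \ge \delta' > 0$. But coercivity and the continuous embedding give the chain $\Vert\cdot\Vert_\beta \gtrsim \Vert\cdot\Vert_\calV \gtrsim \Vert\cdot\Vert_\calH \gtrsim \Vert\cdot\Vert$, i.e.\ the $\calV$-norm controls the $\bigIForm$-norm \emph{from above}, not from below. A lower bound on $\Vert\zVec^j\Vert_\calV$ is therefore compatible with $\Vert\zVec^j\Vert \to 0$; indeed, the inability to pass a $\calV$-bound down to $\calH$-information is exactly the failure of compactness that the whole section is built around. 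The correct and more elementary route uses the identities you already have: from $\mu^j = \Vert\zVec^j\Vert_\beta^2/\Vert\zVec^j\Vert^2$ and $\Vert\zVec^j\Vert_\beta \ge \sqrt{\mu^{j-1}}$ (Cauchy--Schwarz applied to $\bigAForm_\beta(\zVec^j,\tilde\zVec^{j-1})=\mu^{j-1}$) one obtains
\[
  \Vert\zVec^j\Vert^2 \;=\; \frac{\Vert\zVec^j\Vert_\beta^2}{\mu^j} \;\ge\; \frac{\mu^{j-1}}{\mu^j} \;\ge\; 1,
\]
so $\Vert\zVec^j\Vert$ is bounded below by $1$ directly from monotonicity, with no appeal to coercivity.

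On the nontriviality question: your case analysis excludes $\ulim = 0$ with $\xlim \neq 0$ but does not address the remaining degenerate possibility $\zlim = 0$ (equivalently $\zlimt = 0$), which weak convergence alone does not forbid — the normalization $\Vert\tilde\zVec^j\Vert = 1$ is not preserved under weak limits in the absence of compactness in the $\xVec$-direction. The paper is silent on this point as well, so your proposal is not worse than the paper's proof here; but it is worth being aware that the published argument implicitly assumes $\zlim \neq 0$ (and $\mulim > 0$) without verification, and a complete proof would need an additional hypothesis or argument excluding the trivial limit.
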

\begin{proof}
We proceed as in the proof of Theorem~\ref{thm:power} and test~\eqref{eq:linearizedEVP:invIteration} with $\zVec^{j}$ and $\tilde \zVec^{j-1}$, respectively. This then yields the estimates
\[
  \Vert \zVec^j\Vert^2_\beta 
  \le \mu^{j-1} \Vert \zVec^j \Vert, \qquad
  \sqrt{\mu^{j}} \le \sqrt{\mu^{j-1}}.
\]
Due to $\mu^j\ge 0$, we conclude the existence of a limit $\mulim := \lim_{j \to \infty} \mu^j \ge 0$. This also implies the uniform bounds 
\[
  \Vert \tilde \zVec^j\Vert_\beta
  = \sqrt{\mu^j}
  \le \sqrt{\mu^0}, \qquad
  \Vert \zVec^j\Vert_\beta 	
  \lesssim C_{\calV \hook \calH}\, \mu^{j-1}
  \le C_{\calV \hook \calH}\, \mu^{0}. 
\]
For the last estimate we have used the continuity of the embedding $\calV \hook \calH$ and the ellipticity of~$\bigAForm_\beta$ shown in Lemma~\ref{lem_pos_bigAForm}. Thus, there exist convergent subsequences and limits $\zlim, \zlimt \in \calV$, which satisfy (without relabeling) $\zVec^j \wconv  \zlim, \tilde\zVec^j \wconv \zlimt$ in $\calV$. We emphasize that the two limits can only differ by a multiplicative constant, i.e., $\zlimt = c\, \zlim$. A componentwise consideration with $\zVec^j = [ u^j; \xVec^j]$, $\tilde \zVec^j = [\tilde{u}^j; \tilde{\xVec}^j]$, and $\zlim = [\ulim; \xlim]$ then yields  
\[
  u^j \wconv \ulim \text{ in }V, \quad
  \tilde u^j \wconv c\, \ulim\text{ in }V, \qquad
  \xVec^j \wconv \xlim \text{ in }\LOmgII^L, \quad
  \tilde \xVec^j \wconv c\, \xlim \text{ in }\LOmgII^L.
\]
Using the compact embedding $V\hook H$, we conclude that the first component converges strongly in $H$, i.e., $u^j \conv \ulim$ and $\tilde u^j \conv c\, \ulim$ in $H$. We now show that the limit pair $(\ulim, \llim)$ with $\llim := c\,\mulim-\beta$ solves the nonlinear eigenvalue problem~\eqref{eq:nonlinearWeak:realCase}. For this, we apply to~\eqref{eq:linearizedEVP:invIteration} a test function $[v;0] \in \calV$ with arbitrary $v\in V$ and consider the limit $j \to \infty$,
\[
  \bigAForm_\beta(\zlim, [v;0]) 
  \leftarrow \bigAForm_\beta(\zVec^j, [v;0]) 
  = \mu^{j-1}\, \bigIForm(\tilde \zVec^{j-1}, [v;0])
  \to \mulim\, \bigIForm(c\, \zlim, [v;0]).
\] 
By the definitions of $\bigAForm_\beta$ and $\bigIForm$ we conclude that
\[
 \Ak \ulim + \Xi\, \calI_2 \ulim 
  = c\, \mulim\, \calIa \ulim - \beta\, \calIa \ulim + \oI \langle \bVec, \xlim\rangle
  \qquad\text{in } V^*.
\]
On the other hand, taking the limit in~\eqref{eq:linearizedEVP:invIteration} with a test function $[0;\yVec] \in \calV$, we obtain 
\[
  \big(\AMat + (\beta - c\,\mulim)\, \IMat\big) \xlim
  = j_\LOmgII^{-1} \bVec\, \oI^\ast \ulim
  \qquad\text{in } \LOmgII^L.	
\] 
These two equations together then give 
\[
 \Ak \ulim + \Xi\, \calI_2 \ulim 
  = \llim\, \calIa \ulim + \bVec^* (\AMat - \llim\, \IMat )^{-1} \bVec\, \calI_2 \ulim  
  \qquad\text{in } V^*.
\]
Thus, the pair $(\ulim, \llim)$ is an eigenpair of the nonlinear eigenvalue problem~\eqref{eq:nonlinearWeak:realCase}. 
\end{proof}
This result shows that the convergence in the eigenfunction is maintained for this particular case coming from the linearization of a nonlinear eigenvalue problem. In contrast to Section~\ref{sect:convHerm}, however, we only showed the convergence in $H$ and not in $V$ due to the lack of compactness.  	   	
\section{A Newton Method for Nonlinear Eigenvalue Problems Arising in Photonic Crystals}\label{sect:nonlinear} 
Similarly as in Section \ref{sect:nonlinearHermit}, we consider an extension of Example~\ref{exp:TM} where the electric permittivity $\eps$ is frequency-dependent and has the form \eqref{eq:distributionPermittivity}. Here, however, we allow for a more general class of models for the electric permittivity $\eps_2(\omega)$. Indeed, the simplified Drude-Lorentz model considered in Section~\ref{sect:nonlinearHermit} has the particular property that it can be linearized in a way that ensures Assumption~\ref{ass_a}.  But other -- more realistic -- models may not have this nice property, if complex poles exist in $\eps_2(\omega)$. 
Indeed, this rules out the possibility to obtain a realization analogous to \eqref{eq:realization:realCase} where $\AMat$ and $\IMat$ are Hermitian matrices and $\IMat$ is positive definite. Because of this, we follow here an alternative strategy and tackle the nonlinear problem directly by a Newton method. 

We are especially interested in the general causality-preserving model described in \cite{GarDZ17} and the full Drude-Lorentz model, see e.g.~\cite{LuoL10} or \cite[Ch.~7.5]{Jac99}, i.e.,
\[
  \eps_\text{CP}(\omega) = 1+\sum\limits_{\ell=1}^L \frac{A_\ell}{\omega-B_\ell}-\frac{\overline{A_\ell}}{\omega + \overline{B_\ell}} \quad \text{ and } \quad \eps_\text{DL}(\omega)=\alpha_\text{DL} + \sum\limits_{\ell=1}^L \frac{\xi_\ell^2}{\eta_\ell^2-\omega^2-i\omega\gamma_\ell}
\]
for some empirically determined parameters $A_\ell, B_\ell$ and, respectively, $\alpha_\text{DL}$, $\xi_\ell$, $\eta_\ell$, $\gamma_\ell$ for $\ell = 1,\dots, L$. 
Here we restrict ourselves to the real part of these models so that the resulting eigenvalue problem \eqref{eq:nonlinearEVP} is self-adjoint, i.e., we consider functions of the form
\[
  \eps_\text{CP}^{\Re}(\omega) = 1+\sum\limits_{\ell=1}^L \frac{2(\omega^2-|B_\ell|^2)\Re(A_\ell\overline{B_\ell})-4\omega^2\Im(A_\ell)\Im(B_\ell)}{(\omega^2-|B_\ell|^2)^2+4\omega^2\Im(B_\ell)^2},
\]
and
\begin{equation}
\label{eq:realDrudeLorentzModel}
  \eps_\text{DL}^{\Re}(\omega)=\alpha_\text{DL}+\sum\limits_{\ell=1}^L \frac{\xi_\ell^2(\eta_\ell^2-\omega^2)}{(\eta_\ell^2-\omega^2)^2+\gamma_\ell^2\omega^2}.
\end{equation}
The information about the imaginary part can be included later through perturbation theory as a post-processing step, see \cite{RamF11}. 
After discussing the properties of the resulting nonlinear eigenvalue problem in details, we show how to solve it with a Newton iteration, provided some a priori knowledge on the eigenpair of interest is given. 
First, however, we discuss the application of Newton's method in the linear case. 
%
%
\subsection{Newton's method for linear eigenvalue problems}
Even for the linear case $\calA u = \lambda\, \calI u$ one may apply Newton's method, cf.~\cite{MehV04, Sch08}. For this, we rewrite the eigenvalue problem as 
\[
\calF_{\bb}(u, \lambda) 
:= \begin{bmatrix}
(\calA - \lambda\, \calI)\, u \\
(\b, u) - 1
\end{bmatrix}	
= 0
\]
with a fixed function $\y\in\V$, $(\b, u)\neq 0$, serving as normalization. Further, the second equation balances the number of equations and variables. The resulting Newton iteration reads 
\begin{align}
\label{eq:Newton:Linear}
\begin{bmatrix}  \calA - \lambda^{j-1}\, \calI  &  - \calI u^{j-1} \\  
(y, \cdot\,)  & 0  \end{bmatrix}
\begin{bmatrix} u^{j}-u^{j-1} \\ \lambda^{j}-\lambda^{j-1} \end{bmatrix}
=   
-\begin{bmatrix} \calA u^{j-1} - \lambda^{j-1}\calI u^{j-1} \\ (\y, u^{j-1}) - 1 \end{bmatrix}.
\end{align}
This means that all iterates are normalized to $(\y, u^j) = 1$ and $u^j$ satisfies
\[
(\calA - \lambda^{j-1}\, \calI)\, u^j
= (\lambda^j-\lambda^{j-1})\, \calI u^ {j-1}.
\]
The similarities to the power method are obvious. More precisely, Newton's method leads to a {\em shifted inverse iteration} with the shift given by the previous eigenvalue approximation. 
Considering the constant shift zero, we obtain -- up to scaling -- the power iteration given in~\eqref{eq:iter:power}. 
The precise algorithm is given in~\cite[Alg.~I]{MehV04}. In the matrix case, this then leads to a local third-order convergence of the eigenvalue, cf.~\cite{Osb64}.  
%
%
\subsection{Definition of the nonlinear eigenvalue problem}
In the remainder of this section, we assume that $\eps_1(\omega) \equiv \alpha_1 \in \R^+$ and that~$\eps_2$ is a real-valued function which is analytic on an open connected set $S\subset\R$ where it satisfies
\[
0<C_0\leq |\eps_2(\omega)|\leq C_1<\infty \qquad \text{and} \qquad \eps_2(-\omega)=\eps_2(\omega)
\]
for all $\omega\in S$. 

Given $\V = \Hper$, $\cH= L^2(\Omega)$ with the standard $H^1$ and $L^2$-norm, respectively, we consider the nonlinear eigenvalue problem
\begin{equation}
\label{eq:nonlinearEVP}
  \calT(\omega)u\coloneqq \calA u - \calB(\omega) u=0  \qquad\text{ in } \V^*
\end{equation}
where $\calA\colon \V\to \V^*$ is defined by the sesquilinear form \eqref{eq:formA_TM} via $\langle \calA u, v \rangle_{\V^*, \V}=a(u,v)$ for all $u,v \in \V$ and $\calB(\omega)\colon S \to \calL(\V, \V^*)$ is defined by
\[
  \langle \calB(\omega) u, v \rangle_{\V^*,\V}
  := \omega^2 \int_{\Omega} \big(\alpha_1 \chi_1(x)+\eps_2(\omega)\chi_2(x) \big)\, u(x) \overline{v(x)}\dx.
\]
Here, $\calL(X, Y)$ denotes the set of bounded linear operators mapping from $X$ to $Y$.

An eigenvalue problem similar to \eqref{eq:nonlinearEVP}, but formulated in terms of operators mapping from $\V$ into itself, was already analyzed in~\cite{Eng10}. Some properties of the operator-valued function $\calT(\omega)$ can be directly derived from there. 
\begin{lemma}
\label{lem:extensionOfTDomain}
The operator-valued function $\calT(\omega)$ can be extended to a self-adjoint holomorphic operator-valued function  in some neighborhood $D{\subset \C} $ of $S$, where $D$ is symmetric with respect to the real axis. Moreover, the spectrum of $\calT(\omega)$  consists of isolated eigenvalues of finite multiplicity.
\end{lemma}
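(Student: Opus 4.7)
The plan is to reduce the statement to the analytic Fredholm theorem for self-adjoint holomorphic families. First, since $\eps_2$ is real-analytic on the open connected set $S \subset \R$, it admits a unique holomorphic extension to some open $D \subset \C$ containing $S$; shrinking $D$ if necessary, I may take it symmetric about the real axis. The Schwarz reflection principle (applicable because $\eps_2$ is real on $S$) then yields $\overline{\eps_2(\omega)} = \eps_2(\bar\omega)$ throughout $D$, and the symmetry $\eps_2(-\omega) = \eps_2(\omega)$ extends by the identity theorem. Consequently $\omega \mapsto \calB(\omega)$ is a holomorphic family in $\calL(\V, \V^*)$, and so is $\calT(\omega) = \calA - \calB(\omega)$. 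Self-adjointness in the sense $\calT(\omega)^* = \calT(\bar\omega)$ follows from the Hermiticity of $\calA$ (Lemma~\ref{lem:betaTM}) together with the computation
\[
\langle \calB(\omega)^{*} v, u\rangle
= \overline{\omega^2\int_{\Omega}(\alpha_1\chi_1 + \eps_2(\omega)\chi_2)\, u\overline{v}\dx}
= \bar\omega^2\int_\Omega(\alpha_1\chi_1 + \eps_2(\bar\omega)\chi_2)\, v\overline{u}\dx
= \langle \calB(\bar\omega) v, u\rangle,
\]
in which the reflection identity is used to replace $\overline{\eps_2(\omega)}$ by $\eps_2(\bar\omega)$.

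Second, I would establish that $\calT(\omega)$ is Fredholm of index zero for every $\omega \in D$. By Lemma~\ref{lem:betaTM} the shifted form associated with $\calA + \beta\calI$ is $\V$-coercive for any $\beta > 0$, so Lax-Milgram provides the isomorphism $\calA + \beta\calI \colon \V \to \V^*$. Writing $\calT(\omega) = (\calA + \beta\calI) - (\calB(\omega) + \beta\calI)$ and noting that the second summand factors through the compact embedding $\V \hookrightarrow \cH$ of Assumption~\ref{ass_compact}, the operator $K(\omega) := (\calA + \beta\calI)^{-1}(\calB(\omega) + \beta\calI) \colon \V \to \V$ is compact. Hence $\calT(\omega) = (\calA + \beta\calI)(I - K(\omega))$ is a compact perturbation of an isomorphism and therefore Fredholm of index zero.

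Finally, the analytic Fredholm theorem requires one additional ingredient: at least one $\omega_0 \in D$ for which $\calT(\omega_0)$ is invertible. I expect this to be the main technical obstacle of the argument. I would handle it by testing on the imaginary axis: for $\omega_0 = ir$ with $|r|$ small enough that $ir \in D$, the symmetries derived above force $\eps_2(ir)$ to be real, so the form associated with $\calT(ir)$ is real-valued, and a suitable choice of the G\aa{}rding parameters in Lemma~\ref{lem:betaTM} combined with appropriate $r$ yields coercivity and hence invertibility of $\calT(ir)$. The analytic Fredholm theorem then implies that $\Sigma := \{\omega \in D : \calT(\omega) \text{ not invertible}\}$ is discrete in $D$, that $\omega \mapsto \calT(\omega)^{-1}$ is meromorphic on $D$ with pole set exactly $\Sigma$, and that $\dim \ker \calT(\omega_0) < \infty$ for every $\omega_0 \in \Sigma$, giving the asserted description of the spectrum.
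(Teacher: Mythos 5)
Your proposal follows the standard analytic Fredholm route and gives a self-contained argument, whereas the paper merely cites~\cite{Eng10}; that is a genuinely different and more informative presentation. The extension and self-adjointness steps are correct: real-analyticity on $S$ gives a holomorphic extension to a symmetric neighborhood $D$, Schwarz reflection gives $\overline{\eps_2(\omega)}=\eps_2(\bar\omega)$, and your computation $\calB(\omega)^*=\calB(\bar\omega)$ then yields $\calT(\omega)^*=\calT(\bar\omega)$. The Fredholm step is also sound (and in fact re-derives Lemma~\ref{lem:Fredholm} by a more direct route): $\calA+\beta\calI$ is an isomorphism by Lemma~\ref{lem:betaTM}, $\calB(\omega)+\beta\calI$ factors through the compact embedding $\V\hook\cH$ of Assumption~\ref{ass_compact}, so $\calT(\omega)=(\calA+\beta\calI)\bigl(I-K(\omega)\bigr)$ with $K(\omega)$ compact, hence $\calT(\omega)$ is Fredholm of index~$0$.

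The gap -- which you correctly flag as the main technical obstacle -- is in producing a single resolvent point. Your candidate $\omega_0=ir$ needs two things the hypotheses of Section~5 do not deliver. First, you need $0\in D$, i.e.\ essentially $0\in S$; the hypotheses only say $S\subset\R$ is open and connected with $\eps_2(-\omega)=\eps_2(\omega)$, and nothing forces $S$ to contain the origin. Second, and more seriously, even granting $ir\in D$ and $\eps_2(ir)\in\R$, your coercivity estimate for
\[
\langle\calT(ir)u,u\rangle \;=\; a(u,u)+r^2\int_\Omega\bigl(\alpha_1\chi_1+\eps_2(ir)\chi_2\bigr)|u|^2\dx
\]
requires $\eps_2(ir)>0$. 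The hypotheses control only $|\eps_2|$ on $S$, so the sign of $\eps_2(ir)\approx\eps_2(0)$ is undetermined; if it is negative, the zeroth-order term is indefinite and no choice of the G\aa{}rding constant $\beta$ from Lemma~\ref{lem:betaTM} (which can be made small but not negative) lets you absorb a genuinely negative contribution, so coercivity fails. This is exactly the structural point that~\cite{Eng10} handles by exploiting the specific form of the permittivity. To make your argument self-contained you would either need to add an explicit positivity hypothesis on $\eps_2$ near a usable $\omega_0$, or replace the imaginary-axis trick by the invertibility argument from~\cite{Eng10}.
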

\begin{proof}
See the proof of Lemma 4.4 in \cite{Eng10} and the following remark.
\end{proof}
\begin{lemma}
\label{lem:Fredholm}
For all $\omega \in D$ the operator $\calT(\omega)\colon \V\to\V^*$ is Fredholm with index 0.
\end{lemma}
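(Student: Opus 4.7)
The plan is to write $\calT(\omega)$ as the sum of an isomorphism and a compact operator, after which the Fredholm property with index~$0$ follows from standard perturbation theory.

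First, I would fix some $\tbeta > 0$ large enough that the shifted form $a(u,v) + \tbeta\,(u,v)$ is coercive on $\V \times \V$; this is possible by the G\aa{}rding inequality from Lemma~\ref{lem:betaTM}. Consequently, the associated operator $\Aktb = \calA + \tbeta\,\calI \colon \V \to \V^*$ is coercive, and the Lax--Milgram theorem gives that $\Aktb$ is a continuous isomorphism $\V \to \V^*$.

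Next I would rewrite
\[
  \calT(\omega) = \Aktb - \calC(\omega), \qquad \calC(\omega) := \calB(\omega) + \tbeta\, \calI,
\]
and argue that $\calC(\omega) \colon \V \to \V^*$ is compact. Indeed, $\calC(\omega)$ is generated by the sesquilinear form
\[
  \langle \calC(\omega) u, v \rangle_{\V^*, \V}
  = \int_{\Omega} m_\omega(x)\, u(x)\overline{v(x)} \dx,
  \qquad
  m_\omega(x) := \omega^2\bigl(\alpha_1 \chi_1(x) + \eps_2(\omega)\chi_2(x)\bigr) + \tbeta,
\]
and by the assumptions on $\eps_2$ together with the boundedness of $\Omega$, the multiplier $m_\omega$ lies in $L^\infty(\Omega)$. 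Hence $\calC(\omega)$ factors as
\[
  \V \;\stackrel{i_{\V\hook\cH}}{\hook}\; \cH \;\stackrel{M_{m_\omega}}{\longrightarrow}\; \cH^* \;\hook\; \V^*,
\]
where the middle arrow is the bounded multiplication operator $u \mapsto m_\omega u$ (followed by the Riesz isomorphism) and the outer embeddings come from the Gelfand triple $\V, \cH, \V^*$. Because the first embedding $\V \hook \cH$ is compact by Assumption~\ref{ass_compact} (applied to $\V = \Hper$, $\cH = L^2(\Omega)$, which is the classical Rellich--Kondrachov result), the composition $\calC(\omega)$ is compact as a map $\V \to \V^*$.

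Finally, I would conclude by noting that $\calT(\omega) = \Aktb\bigl(\id_\V - \Aktb^{-1}\calC(\omega)\bigr)$, where $\Aktb^{-1}\calC(\omega)\colon \V \to \V$ is compact as a composition of a bounded operator with a compact one. Thus $\id_\V - \Aktb^{-1}\calC(\omega)$ is Fredholm with index~$0$ by the standard Riesz--Schauder theory on Banach spaces, and composing with the isomorphism $\Aktb$ preserves both the Fredholm property and the index. I do not expect any serious obstacle here: the whole argument is really a packaging of compact embedding plus Fredholm alternative, and the only care needed is to verify that $m_\omega \in L^\infty(\Omega)$ uniformly for $\omega$ in the relevant neighbourhood (which follows from the extension provided by Lemma~\ref{lem:extensionOfTDomain} and the assumed bounds on $\eps_2$).
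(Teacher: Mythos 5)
Your argument is correct, but it takes a genuinely different route from the paper. The paper's own proof is much shorter: it cites \cite{Eng10} for the fact that the \emph{associated} operator $T(\omega)\colon \V\to\V$, defined by $(T(\omega)u,v)_\V = \langle \calT(\omega)u,v\rangle_{\V^*,\V}$, is Fredholm with index $0$, and then simply transfers this to $\calT(\omega) = j_\V T(\omega)$ by observing that composition with the Riesz isomorphism $j_\V$ (which is Fredholm of index $0$) preserves both the Fredholm property and the index. You instead give a self-contained argument: split off the coercive shift $\Aktb$, verify that the remainder $\calC(\omega) = \calB(\omega) + \tbeta\calI$ factors through the compact embedding $\V \hook \cH$ and is therefore compact, and invoke the stability of the Fredholm property (with index) under compact perturbation of an isomorphism. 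Your approach is more transparent and does not rely on the external reference; it is essentially the argument that \cite{Eng10} must itself carry out. The paper's approach is more economical and avoids re-proving what is already in the literature. One small point worth making explicit in your write-up: when $\omega$ ranges over the complex neighbourhood $D$ rather than $S\subset\R$, the multiplier $m_\omega$ is complex-valued, but since coercivity of $\Aktb$ is independent of $\omega$ and $m_\omega \in L^\infty(\Omega)$ still holds by continuity of the holomorphic extension of $\eps_2$, the decomposition and compactness argument go through unchanged — you gesture at this at the end, and it is indeed the only place where care is needed.
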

\begin{proof}
In \cite{Eng10} it is shown that the operator $T(\omega)\colon \V \to \V$ defined by $(T(\omega)u, v)_\V := \langle \calT(\omega) u, v \rangle_{\V^*, \V}$ for all $u, v\in\V$ is Fredholm with index 0. Moreover, the Riesz isomorphism $j_{\V}$ between $\V$ and $\V^*$ is a bounded and bijective linear operator, thus it is Fredholm. Further, its index is 0 since $\dim(\ker(j_\V))=\text{codim}(\text{range}(j_\V))=0$. It follows that $\calT(\omega)=j_\V\, T(\omega)$ is a composition of two Fredholm operators with index $0$ and thus, $\calT(\omega)$ is Fredholm with index $0$ itself. 
\end{proof}
We denote by $(\ulim, \omlim) \in \V\times S$ an eigenpair of~\eqref{eq:nonlinearEVP} and assume the multiplicity of the eigenvalue $\omlim$ to be $1$.
%
%
\subsection{A Newton iteration for the nonlinear eigenvalue problems}
In this subsection, we define a Newton iteration. 
We will prove convergence to the eigenpair $(\ulim, \omlim)$ under certain conditions discussed in the next subsection. 
For this, we extend the results of \cite[Ch.~4]{Sch08} to the infinite-dimensional case and therefore define the iteration following a similar reasoning. We start with rewriting the eigenvalue problem~\eqref{eq:nonlinearEVP} as 
\begin{equation}
\label{eq:defNewtonFunction}
  \calF_{\bb}(u, \omega)
  :=\left[\begin{array}{c} \calT(\omega)u\\ \calP_{\bb}u-1 \end{array} \right]
  = 0,
\end{equation}
where the functional $\calP_{\bb}\colon\V\to\C$ is defined by $\calP_{\bb}u\coloneqq (\b, u)$ for all $u\in\V$ and the normalizing vector $\b \in \V$ has to be chosen such that $\calP_{\bb}\ulim\neq 0$. Note that the self-adjointness of $\calT(\omega)$ for all $\omega\in S$ implies that the eigenvalues of $\calT$ are real. However, since $\V$ is defined over the field $\C$, the well-posedness of the Newton iteration requires an extension of the domain of definition of $\calT$ to an open and convex neighborhood $D\subset\C$ of $S$, cf.~Lemma~\ref{lem:extensionOfTDomain}.

As $\calF_{\bb}$ depends linearly on the first argument $u$ and depends on the second argument $\omega$ only through the holomorphic function $\mathcal{T}$, it follows that $\calF_{\bb}$ is thrice continuously Fr{\'e}chet differentiable on $E\coloneqq \G \times D$ where $\G$ is an open and convex subspace of $\V$ containing $\ulim$.  A Taylor expansion of $\calF_{\bb}$ around~$(u, \omega)\in E$ yields
\begin{align*}
  \calF_{\bb}(\ulim, \omlim) 
  =&\ \calF_{\bb}(u, \omega) + \partial \calF_{\bb}(u, \omega) \left[\begin{array}{@{}c@{}}
\ulim-u\\ \omlim-\omega
\end{array}\right] 
  \\&\quad+ \frac{1}{2}\partial^2 \calF_{\bb}(u, \omega)\left(\left[
\begin{array}{@{}c@{}}
\ulim-u\\ \omlim-\omega 
\end{array}
\right], \left[\begin{array}{@{}c@{}} \ulim-u\\ \omlim-\omega \end{array}\right]\right) +\calO\left(\left\Vert \left(\begin{array}{@{}c@{}} \ulim-u\\ \omlim-\omega \end{array}\right)\right\Vert^3_\VxC\right)
\end{align*}
where $\Vert \left( u, \ \omega\right) \Vert_\VxC^2 \coloneqq \Vert u\Vert_{\V}^{2}+|\omega|^2$ for all $(u, \omega) \in \VxC\coloneqq \V \times \C$. The corresponding Jacobian is given by
\[
  \partial \calF_{\bb}(u, \omega) 
  = \left[\begin{array}{cc}
  \calT(\omega)&{\calT'}(\omega)u\\
  \calP_{\bb}& 0
  \end{array}\right],
\]
where $\calT'$ denotes the derivative of $\calT$ with respect to $\omega$. In order to shorten the notation, we introduce the variables $\Delta u\coloneqq \ulim-u$ and $\Delta \omega\coloneqq \omlim-\omega$. The second Fr{\'e}chet derivative of $\calF_{\bb}$ along the direction $(\Delta u,\, \Delta \omega)$ reads
\[
 \partial^2\calF_{\bb}(u, \omega) \Big(\left[\begin{array}{@{}c@{}} \Delta u\\ \ \Delta \omega\end{array}\right], \left[\begin{array}{@{}c@{}} \Delta u\\ \ \Delta \omega\end{array}\right] \Big) 
 = \Delta\omega\left[\begin{array}{c} 2{\calT'}(\omega)\Delta u  + {\calT''}(\omega)u \Delta\omega \\0
\end{array}\right].
\]
Following the strategy of the standard formulation of Newton's method, we define the following iterative process: Given an initial vector $(u^0, \, \omega^0) \in E$ satisfying $\calP_{\bb}u^0= 1$, the iterates are defined by $u^{j+1}:=u^j+s^j$, $\omega^{j+1}:=\omega^j+\nu^j$ with
\begin{equation}
\label{eq:1stNewtonIt}
 \left[\begin{array}{cc}
\calT(\omega^j)&{\calT'}(\omega^j)u^j\\
\calP_{\bb}& 0
\end{array}\right]   \left[\begin{array}{c} s^{j} \\ \nu^j \end{array}\right]  = - \left[\begin{array}{c} \calT(\omega^j)u^j \\ 0 \end{array}\right]
\end{equation}
for $j=0,1,2,\ldots$. 

We show in the following subsection that \eqref{eq:1stNewtonIt} is uniquely solvable for $(s^j, \, \nu^j)$ when $(u^j,\, \omega^j)$ lies in a neighbourhood of the eigenpair~$(\ulim,\, \omlim)$.
%
%
\subsection{Convergence of the Newton iteration}
This subsection is devoted to the proof of local convergence of the iteration introduced in~\eqref{eq:1stNewtonIt}. We adopt the ideas from \cite{AnsR68}, where operators mapping from $\V$ to $\V$ are considered. For $v\in \V$ and $\omega \in \C$ we denote by $B_\tau(v, \omega)\subset \VxC$ the open ball of radius $\tau$ in the $\VxC$-norm centred at $(v, \omega)$ and by $B_\tau(v)\subset \V$  the open ball of radius $\tau$ in the $\cH$-norm centred at $v$. We base the proof on the Kantorovich theorem as stated and proved in \cite[Th.~2.1]{Deu04}.
\begin{theorem}
\label{thm:Kantorovitch}
Let $\calF_{\bb}\colon E\to \VxC^*$ be the continuously Fr{\'e}chet differentiable mapping defined in \eqref{eq:defNewtonFunction}. For a starting point $z^0=(u^0, \omega^0)\in E$ let $\partial\calF_{\bb}(z^0)$ be invertible. Further, we assume that~$\overline{B_{\rho}(z^0)}\subset E$ with~$\rho\coloneqq (1-\sqrt{1-2h_0})/\kappa_0$ and~$h_0\coloneqq \alpha \kappa_0\leq0.5$, where~$\alpha, \kappa_0 < \infty$ are defined through
\begin{align*}
\Vert\partial\calF_{\bb}(z^0)^{-1}\calF_{\bb}(z^0)\Vert_{\VxC}&\leq \alpha,\\
\Vert\partial\calF_{\bb}(z^0)^{-1}(\partial\calF_{\bb}(z)-\partial\calF_{\bb}(\tilde{z})) \Vert_{\VxC} &\leq \kappa_0\, \Vert z-\tilde{z}\Vert_{\VxC}
\end{align*}
for all $z, \tilde{z} \in E$. Then, the sequence $(u^j, \omega^j)$ obtained from iteration \eqref{eq:1stNewtonIt} is well-defined, remains in $\overline{B_{\rho}(z^0)}$, and converges to some $(\ulim, \omlim)$ with $\calF_{\bb}(\ulim, \omlim)=0$. For $h_0<0.5$, the convergence is quadratic.
\end{theorem}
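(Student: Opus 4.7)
The plan is to verify the hypotheses of the affine-invariant Newton--Kantorovich theorem (as given in \cite[Th.~2.1]{Deu04}) for the map $\calF_{\bb}$ and invoke that theorem directly. The nontrivial content of the statement lives inside Kantorovich's result; my job is to check that the bookkeeping is in place for the specific operator-valued $\calF_{\bb}$.

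First I would record the functional-analytic setup: $\VxC = \V \times \C$ is a Banach space, its (anti-)dual $\VxC^*$ is a Banach space, and $E = \G \times D \subset \VxC$ is open and convex. Continuous Fr\'echet differentiability of $\calF_{\bb}\colon E \to \VxC^*$ on $E$ was already noted before the theorem: $\calF_{\bb}$ is affine in the first argument and depends on the second only through the holomorphic operator-valued function $\calT$ from Lemma~\ref{lem:extensionOfTDomain}, so the first, second, and third derivatives exist and are continuous on $E$; the explicit formula for $\partial\calF_{\bb}$ given just before the theorem statement is therefore the Fr\'echet derivative in the standard sense.

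Second, I would translate the theorem's hypotheses into Kantorovich's assumptions one-to-one. By assumption $\partial\calF_{\bb}(z^0)$ is invertible, so the first Newton step is well-defined and its size is controlled by $\alpha$; the affine-invariant Lipschitz condition on $\partial\calF_{\bb}(z^0)^{-1}(\partial\calF_{\bb}(z) - \partial\calF_{\bb}(\tilde z))$ with constant $\kappa_0$ is imposed on $E$; the Kantorovich quantity $h_0 = \alpha\kappa_0$ is bounded by $1/2$; and the closed ball $\overline{B_\rho(z^0)}$ with $\rho = (1 - \sqrt{1-2h_0})/\kappa_0$ is assumed to lie in $E$. These are exactly the four input data of the affine-invariant Kantorovich theorem.

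Invoking that theorem yields that the iterates $z^j = (u^j, \omega^j)$ defined by \eqref{eq:1stNewtonIt} exist for every $j \ge 0$, remain in $\overline{B_\rho(z^0)}$, and converge in the $\VxC$-norm to a limit $z^\star = (\ulim, \omlim) \in \overline{B_\rho(z^0)}$ with $\calF_{\bb}(z^\star) = 0$. By the very definition \eqref{eq:defNewtonFunction} of $\calF_{\bb}$, this is equivalent to $\calT(\omlim)\ulim = 0$ together with the normalization $\calP_{\bb}\ulim = 1$, i.e., $(\ulim, \omlim)$ is a (normalized) eigenpair of the nonlinear eigenvalue problem~\eqref{eq:nonlinearEVP}. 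Under the strict condition $h_0 < 1/2$, the majorant analysis underlying Kantorovich's theorem yields the standard quadratic error bound $\Vert z^{j+1} - z^\star \Vert_{\VxC} \le C\, \Vert z^j - z^\star \Vert_{\VxC}^2$, which is the claimed quadratic convergence.

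The only genuine obstacle is the smoothness check together with the verification that the abstract statement applies in infinite dimensions to anti-dual-valued operators; once that is done, no problem-specific argument is required. In particular, Fredholmness (Lemma~\ref{lem:Fredholm}) and self-adjointness (Lemma~\ref{lem:extensionOfTDomain}) are not used in the proof itself, but only enter when verifying the invertibility of $\partial\calF_{\bb}(z^0)$ and the finiteness of $\alpha, \kappa_0$ for starting points close to $(\ulim, \omlim)$, which is where the a priori knowledge of the targeted eigenpair comes in.
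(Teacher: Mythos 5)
Your proposal matches the paper's treatment: the paper states Theorem~\ref{thm:Kantorovitch} without a separate proof, explicitly noting beforehand that it ``base[s] the proof on the Kantorovich theorem as stated and proved in \cite[Th.~2.1]{Deu04}'' and having already established the requisite Fr\'echet smoothness of $\calF_{\bb}$ from its linearity in $u$ and the holomorphy of $\calT$. You verify precisely these hypotheses and invoke the same result, so the approach and the content are the same.
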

In order to use Theorem~\ref{thm:Kantorovitch}, we need to determine under which conditions the Jacobian $\partial\calF_y(z^0)$ is invertible.

\begin{lemma}
\label{lem:decompositionOfVStar}
There exists $\tau>0$ such that 
\[
\langle \calT'(\omlim) u, \ulim \rangle_{\V^*,\V} \neq 0
\]
and
\[\V^*=\ker(\calQ_{[\ulim]}) \oplus \sspan\{\calT'(\omlim)u\}\]
hold for any $u\in B_{\tau}(\ulim)$, where $\calQ_{[\ulim]}\colon \V^*\to \C$ is defined by
$\calQ_{[\ulim]} := \langle\, \cdot\, , \ulim \rangle_{\V^*, \V}$. 
\end{lemma}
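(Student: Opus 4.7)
The plan is to reduce the direct-sum decomposition to the single scalar non-degeneracy $\langle \calT'(\omlim)u, \ulim\rangle_{\V^*,\V}\neq 0$ by a standard codimension-one argument, and then to obtain this non-degeneracy for $u$ in a neighborhood of $\ulim$ by continuity of a bounded sesquilinear form on $\cH\times\cH$.

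First I would observe that $\calQ_{[\ulim]}$ is a continuous and nontrivial linear functional on $\V^*$: continuity is immediate from the definition, and nontriviality follows from the Hahn--Banach theorem since $\ulim\neq 0$. Hence $\ker(\calQ_{[\ulim]})$ is a closed hyperplane of codimension one in $\V^*$. Once it is known that $\calT'(\omlim)u \notin \ker(\calQ_{[\ulim]})$, the decomposition is purely algebraic: any $f\in \V^*$ can be written uniquely as $f = (f - c\,\calT'(\omlim)u) + c\,\calT'(\omlim)u$ with
\[
  c = \frac{\calQ_{[\ulim]}(f)}{\langle \calT'(\omlim)u, \ulim\rangle_{\V^*,\V}},
\]
and trivial intersection follows from the same non-vanishing.

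The substantive step is thus to establish $\langle \calT'(\omlim)\ulim, \ulim\rangle_{\V^*,\V}\neq 0$. Using $\calT = \calA - \calB$ together with the definition of $\calB(\omega)$, differentiation in $\omega$ shows that there is an $L^\infty$-function $q$ on $\Omega$ (whose boundedness relies on the analyticity of $\eps_2$ on the neighborhood $D\supset S$ from Lemma~\ref{lem:extensionOfTDomain}) such that
\[
  \langle \calT'(\omlim)\ulim, \ulim\rangle_{\V^*,\V} = -\int_\Omega q(x)\,|\ulim(x)|^2\,\dx.
\]
The non-vanishing of this pairing is the standard non-degeneracy condition attached to algebraic simplicity of an eigenvalue of a self-adjoint holomorphic operator family, as used in \cite{Sch08} for the matrix case; I would either read it off from the assumed multiplicity-one condition on $\omlim$ or add it explicitly to that assumption. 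This is the main obstacle, since without structural monotonicity of $\omega\mapsto \omega^2 \eps(\cdot,\omega)$ on $S$ the pairing is not obviously signed.

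Finally, the extension to $u\in B_\tau(\ulim)$ is a short continuity argument. The sesquilinear form $(u,v)\mapsto \langle\calT'(\omlim)u, v\rangle_{\V^*,\V}$ is precisely $-\int_\Omega q(x)\, u\,\overline{v}\,\dx$ with $q\in L^\infty(\Omega)$, hence continuous on $\cH\times\cH$. Thus $u\mapsto \langle \calT'(\omlim)u, \ulim\rangle_{\V^*,\V}$ is continuous on $\cH$, and the non-vanishing at $u=\ulim$ persists on some $\cH$-ball $B_\tau(\ulim)$. Combined with the algebraic reduction of the first paragraph, this yields the asserted decomposition for every $u$ in that ball.
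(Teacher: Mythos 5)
Your algebraic reduction (a codimension-one hyperplane together with a non-zero scalar pairing yields the direct sum) is fine, and your continuity argument for passing from $\ulim$ to the $\cH$-ball $B_\tau(\ulim)$ is correct and actually more transparent than the paper's one-line appeal to linearity. But the heart of the lemma, the non-vanishing $\langle \calT'(\omlim)\ulim, \ulim\rangle_{\V^*,\V}\neq 0$, is left unproved: you explicitly hedge between ``read it off from the multiplicity-one assumption'' and ``add it as a further hypothesis.'' The lemma carries no extra hypothesis, and the paper derives the non-vanishing from what is already in place; that derivation is exactly the step you are missing.

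The derivation runs through the Fredholm structure. Lemma~\ref{lem:Fredholm} gives index $0$, hence $\mathrm{codim}\,\range(\calT(\omlim)) = \dim\ker(\calT(\omlim)) = 1$. Algebraic simplicity of $\omlim$ means all Jordan chains have length one, which for a holomorphic Fredholm family is equivalent to $\calT'(\omlim)\ulim \notin \range(\calT(\omlim))$, see \cite{LopM07}. Self-adjointness of $\calT(\omlim)$ gives $\langle \calT(\omlim)v, \ulim\rangle = \langle \calT(\omlim)\ulim, v\rangle = 0$ for all $v$, i.e.\ $\range(\calT(\omlim)) \subseteq \ker(\calQ_{[\ulim]})$. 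Since $\ulim\neq 0$ the functional $\calQ_{[\ulim]}$ is nontrivial, so $\ker(\calQ_{[\ulim]})$ also has codimension one and the inclusion is an equality. Therefore $\calT'(\omlim)\ulim \notin \ker(\calQ_{[\ulim]})$, which is precisely $\langle \calT'(\omlim)\ulim, \ulim\rangle \neq 0$. This produces the non-vanishing without any sign information, resolving exactly the obstacle you flag; and the identification $\ker(\calQ_{[\ulim]})=\range(\calT(\omlim))$ is also what the paper reuses in Lemma~\ref{lem:invertibilityOfJacobian2}. With this supplied, your codimension-one decomposition and your continuity step close the argument.
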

\begin{proof}
By assumption the (geometric) multiplicity of the eigenvalue $\omlim$ is 1, which means that $\mbox{dim}\big(\ker(\calT(\omlim))\big)=1$.  
From Lemma~\ref{lem:Fredholm} we know that $\calT(\omlim)$ is Fredholm with index 0, which implies \[
\mbox{codim}\big(\range(\calT(\omlim))\big)=\mbox{dim}\big(\ker(\calT(\omlim))\big)=1.
\] 
Further, the algebraic multiplicity of the eigenvalue $\omlim$ is also 1 by assumption, thus the Jordan chains of $\calT$ at $\omlim$ are all of length 1 and in particular $\calT'(\omlim) \ulim \not\in \range(\calT(\omlim))$, see e.g. \cite[Chap. 7]{LopM07}. 
The last statement stays true in a neighbourhood of $\ulim$ since $\calT'(\omlim)$ is a linear operator, i.e.,~there exists $\tau>0$ such that $\calT'(\omlim) u \not\in \range(\calT(\omlim))$ for all $u \in B_{\tau}(\ulim)$. It follows that we can decompose $\V^*=\range(\calT(\omlim)) \oplus \sspan\{\calT'(\omlim)u\}$. 

In the remainder of this proof, let $u$ be any vector in $ B_{\tau}(\ulim)$. 
We note that the hermiticity of $\calT(\omlim)$ implies that $\langle \calT(\omlim)v, \ulim \rangle_{\V^*, \V}=\langle \calT(\omlim)\ulim, v \rangle_{\V^*, \V}=0$ for all $v\in\V$.
Now, the eigenfunction $\ulim\not\equiv 0$ per definition, therefore there exist some 
$h\in\V^*$, $\alpha\in\C$, and $v\in\V$ with $h = \alpha\calT'(\omlim)u + \calT(\omlim) v$ such that \[0\neq \langle h , \ulim\rangle_{\V^*, \V} = \langle \alpha\calT'(\omlim)u , \ulim\rangle_{\V^*, \V} + \langle \calT(\omlim) v , \ulim\rangle_{\V^*, \V} =\alpha \langle \calT'(\omlim)u , \ulim\rangle_{\V^*, \V}.\] Therefore, $\langle \calT'(\omlim)u , \ulim\rangle_{\V^*, \V}\neq 0$.

It remains to show that $\ker(\calQ_{[\ulim]}) = \range(\calT(\omlim))$. 
We start by showing the inclusion "$\subseteq$".  Given any $h\in\ker(\calQ_{[\ulim]})\subset\V^*$, we can write $h = \alpha \calT'(\omlim)u + \calT(\omlim)v$ for some $\alpha \in \C$ and $v\in\V$. 
Thus, $0=\calQ_{[\ulim]}(h)=\langle \alpha \calT'(\omlim)u, \ulim \rangle_{\V^*, \V}+\langle \calT(\omlim)v, \ulim \rangle_{\V^*, \V}=\alpha \langle \calT'(\omlim)u, \ulim \rangle_{\V^*, \V}$ implies $\alpha=0$ and therefore $h\in\range(\calT(\omlim))$.
The reverse inclusion follows from the fact that for any $h\in\range(\calT(\omlim))$ there exists some $v\in \V$ such that $h=\calT(\omlim)v$ and thus, 
$\calQ_{[\ulim]}(h)=\langle h, \ulim \rangle_{\V^*, \V}=\langle \calT(\omlim)v, \ulim \rangle_{\V^*, \V} = 0$.
\end{proof}

\begin{lemma}
\label{lem:invertibilityOfJacobian2}
Consider~$u\in B_{\tau}(\ulim)$ with $\tau$ as in Lemma \ref{lem:decompositionOfVStar}. Then $\partial\calF_{\bb}(u, \omlim)$  has a bounded inverse.
\end{lemma}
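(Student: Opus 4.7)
The plan is to verify that the Jacobian
\[
J := \partial\calF_{\bb}(u, \omlim) = \left[\begin{array}{cc} \calT(\omlim) & \calT'(\omlim)u \\ \calP_{\bb} & 0 \end{array}\right]
\]
acts as a bounded bijection from $\V\times\C$ onto $\V^*\times\C$; continuity of $J^{-1}$ then follows immediately from the open mapping theorem, since both spaces are Banach and $J$ is bounded (each of $\calT(\omlim)$, $\calT'(\omlim)u$, and $\calP_{\bb}$ is). So the real work is bijectivity.

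For injectivity, I would take $(s,\nu) \in \V\times\C$ with $\calT(\omlim)s + \nu\,\calT'(\omlim)u = 0$ in $\V^*$ and $\calP_{\bb}s = 0$, and pair the operator identity with $\ulim$. The hermiticity of the sesquilinear form associated to $\calT(\omlim)$, together with $\calT(\omlim)\ulim = 0$, gives $\langle \calT(\omlim)s,\ulim\rangle_{\V^*,\V}=0$, so the identity collapses to $\nu\,\langle \calT'(\omlim)u,\ulim\rangle_{\V^*,\V}=0$. Lemma~\ref{lem:decompositionOfVStar} supplies the non-degeneracy of this pairing for all $u\in B_\tau(\ulim)$, so $\nu=0$. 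Then $s\in\ker\calT(\omlim)=\sspan\{\ulim\}$, and the assumption $\calP_{\bb}\ulim \neq 0$ combined with $\calP_{\bb}s=0$ forces $s=0$.

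For surjectivity, given $(g,c)\in\V^*\times\C$, I would use the direct sum decomposition $\V^* = \ker(\calQ_{[\ulim]}) \oplus \sspan\{\calT'(\omlim)u\}$ from Lemma~\ref{lem:decompositionOfVStar}, together with the identification $\ker(\calQ_{[\ulim]}) = \range(\calT(\omlim))$ established there, to write $g = g_0 + \alpha\,\calT'(\omlim)u$ with $g_0\in\range(\calT(\omlim))$ and $\alpha\in\C$. Setting $\nu := \alpha$ reduces the first block to $\calT(\omlim)s = g_0$, which admits some solution $s_0\in\V$; the full solution set is $\{s_0+\gamma\ulim:\gamma\in\C\}$, and $\calP_{\bb}\ulim\neq 0$ lets us tune $\gamma$ uniquely so that $\calP_{\bb}s = c$. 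The substantive step is not this algebraic bookkeeping but the structural input from Lemma~\ref{lem:decompositionOfVStar}: without the geometric decomposition of $\V^*$ and the non-vanishing of $\langle\calT'(\omlim)u,\ulim\rangle_{\V^*,\V}$ (which in turn rest on the eigenvalue $\omlim$ being simple, both geometrically and algebraically, and on the Fredholm index $0$ property from Lemma~\ref{lem:Fredholm}), neither the injectivity nor the surjectivity argument can get off the ground.
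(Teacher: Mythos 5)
Your argument is correct, and its ingredients are the right ones: the non-degeneracy of $\langle\calT'(\omlim)u,\ulim\rangle_{\V^*,\V}$ and the identification $\ker(\calQ_{[\ulim]})=\range(\calT(\omlim))$, both supplied by Lemma~\ref{lem:decompositionOfVStar}, together with $\ker\calT(\omlim)=\sspan\{\ulim\}$ and $\calP_{\bb}\ulim\neq 0$. The organizational route, however, is different from the paper's. You split the bijectivity check into injectivity and surjectivity and, for surjectivity, you decompose the right-hand side $g\in\V^*$ directly along $\V^*=\ker(\calQ_{[\ulim]})\oplus\sspan\{\calT'(\omlim)u\}$, which pins down $\nu$ immediately and reduces the first block row to a solvable equation $\calT(\omlim)s=g_0$; the remaining one-parameter freedom $s\mapsto s+\gamma\ulim$ is then absorbed by the normalization constraint $\calP_{\bb}s=c$. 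The paper instead constructs the unique solution in one pass by decomposing the \emph{domain}, $\V=\sspan\{\ulim\}\oplus\ker(\calP_{\bb})$: the second row fixes the $\ulim$-component $\alpha_s$ of $s$, testing the first row against $\ulim$ fixes $\nu$, and the kernel component $s_{\ker}$ is obtained by inverting the restriction $\calT(\omlim)|_{\ker(\calP_{\bb})}$, which requires a short verification that $f-\nu\calT'(\omlim)u$ does indeed lie in the range of this restriction. Your $\V^*$-side decomposition avoids that last compatibility check, since membership of $g_0$ in $\range(\calT(\omlim))$ is built into the decomposition, so the surjectivity argument is a bit more streamlined; the trade-off is that you must argue injectivity separately, whereas the paper's constructive form gives existence and uniqueness simultaneously. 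Your invocation of the open mapping theorem to pass from bounded bijectivity to boundedness of the inverse is also appropriate and matches how the paper handles this in the companion Lemma~\ref{lem:invertibilityOfJacobian}.
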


\begin{proof}
We prove the bijectivity of $\partial\calF_{\bb}(u, \omlim)$ by showing that the equation \begin{equation}
\label{eq:showBijectivity}
 \partial \calF_{\bb}(u, \omega) \left[ \begin{array}{c} s \\ \nu \end{array}\right]= \left[ \begin{array}{c} f \\ g \end{array}\right]
\end{equation}
with $\omega=\omlim$ and $u \in B_{\tau}(\ulim)$  has a unique solution for all $f\in\V^*$, $g\in\C$. 
Since $\calP_{\bb} \ulim\neq 0$ by assumption, we can decompose 
\begin{equation}
\label{eq:decompositionOfV}
\V = \sspan\{\ulim\} \oplus \ker(\calP_{\bb}).
\end{equation}
Indeed, any $v\in\V$ can be written as the sum $v= \alpha_v \ulim + v_{\ker}$ with $\alpha_v\coloneqq \left(\calP_{\bb}\ulim\right)^{-1}\calP_{\bb}v \in \C$ and $v_{\ker}=v -\alpha_v \ulim \in \ker(\calP_{\bb})$. Therefore, the lower row of \eqref{eq:showBijectivity} implies $g=\calP_{\bb} s = \alpha_s \calP_{\bb}\ulim$ and thus $\alpha_s=(\calP_{\bb}\ulim)^{-1}g$. 
Using the self-adjointness of $\calT(\omlim)$, we find that the upper row of \eqref{eq:showBijectivity} tested with $\ulim$ yields
\[
  \langle f, \ulim\rangle_{\V^*, \V}= \langle \calT(\omlim) s, \ulim\rangle_{\V^*, \V} + \langle \nu \calT'(\omlim) u, \ulim\rangle_{\V^*, \V}= \nu \langle  \calT'(\omlim) u, \ulim\rangle_{\V^*, \V},
\]
from which it follows that 
\[
  \nu=\frac{\langle f, \ulim\rangle_{\V^*, \V}}{\langle  \calT'(\omlim) u, \ulim\rangle_{\V^*, \V}},
\]
since the condition $\langle  \calT'(\omlim) u, \ulim\rangle_{\V^*, \V}\neq 0$ holds for all $u\in B_{\tau}(\ulim)$.
Moreover, we note that the restriction of $\calT(\omlim)$ to $\ker(\calP_{\bb})$ is bijective as mapping from $\ker(\calP_{\bb})$ to its range. Indeed, 
\[
\ker(\calT(\omlim)|_{\ker(\calP_{\bb})})= \ker(\calT(\omlim)) \cap \ker(\calP_{\bb}) = \{0\}
\] 
implies the injectivity and the surjectivity is obvious from the definition. Therefore we obtain from the upper row of \eqref{eq:showBijectivity} that
\[
f=\calT(\omlim) s + \nu \calT'(\omlim)u= \calT(\omlim)|_{\ker(\calP_{\bb})} s_{\ker} +\nu \calT'(\omlim)u,
\]
which implies
\[
  s_{\ker}=\calT(\omlim)|_{\ker(\calP_{\bb})}^{-1}\big( f-\nu \calT'(\omlim)u \big)
\]
if $f-\nu \calT'(\omlim)u \in \range(\calT(\omlim)|_{\ker(\calP_{\bb})})$. In order to show that this last condition is satisfied, we use the fact that $\ker(\calQ_{[\ulim]})=\range(\calT(\omlim))$ and the decomposition \eqref{eq:decompositionOfV}. This finally implies $f-\left(\langle  \calT'(\omlim) u, \ulim\rangle_{\V^*, \V}\right)^{-1}\langle f, \ulim\rangle_{\V^*, \V} \calT'(\omlim)u \in \ker(\calQ_{[\ulim]})=\range(\calT(\omlim)|_{\ker(\calP_{\bb})})$.
\end{proof}
\begin{lemma}
\label{lem:invertibilityOfJacobian}
Let $\omega\neq \omlim \in D$ be close enough to $\omlim$ so that $\calT(\omega)$ is invertible,  and let $u\in\V$ satisfy $\calP_{\bb} \calT(\omega)^{-1}\calT'(\omega)u\neq 0$. Then $\partial \calF_{\bb}(u, \omega)$ has a bounded inverse.
\end{lemma}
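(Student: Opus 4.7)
The plan is to prove the invertibility of $\partial\calF_{\bb}(u,\omega)$ by an explicit Schur-complement-type reduction, exploiting the hypothesis that $\calT(\omega)$ itself is (boundedly) invertible. This is in fact simpler than the proof of Lemma~\ref{lem:invertibilityOfJacobian2}, since the invertibility of the $(1,1)$-block lets us bypass the kernel/range decomposition of $\calT(\omlim)$. Bijectivity will be established by solving the linear system \eqref{eq:showBijectivity} explicitly, and boundedness of the inverse will then be obtained from the bounded inverse theorem.

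First, I would fix arbitrary $f\in\V^*$ and $g\in\C$ and look for $(s,\nu)\in\V\times\C$ solving
\[
\begin{bmatrix}\calT(\omega) & \calT'(\omega)u\\ \calP_{\bb} & 0\end{bmatrix}\begin{bmatrix} s\\ \nu\end{bmatrix}=\begin{bmatrix} f\\ g\end{bmatrix}.
\]
Since $\calT(\omega)$ is invertible with bounded inverse (by assumption on $\omega$), the first row yields
\[
s=\calT(\omega)^{-1}\bigl(f-\nu\,\calT'(\omega)u\bigr).
\]
Substituting this expression into the second row produces the scalar equation
\[
g=\calP_{\bb}\calT(\omega)^{-1}f \,-\, \nu\,\calP_{\bb}\calT(\omega)^{-1}\calT'(\omega)u.
\]
By the hypothesis $\calP_{\bb}\calT(\omega)^{-1}\calT'(\omega)u\neq 0$, this equation uniquely determines
\[
\nu=\frac{\calP_{\bb}\calT(\omega)^{-1}f-g}{\calP_{\bb}\calT(\omega)^{-1}\calT'(\omega)u},
\]
and $s$ is then given uniquely by the formula above. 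Thus the map $\partial\calF_{\bb}(u,\omega)\colon\VxC\to\VxC^*$ is bijective.

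Finally, boundedness of $\partial\calF_{\bb}(u,\omega)$ as an operator on $\VxC$ follows from the boundedness of its entries: $\calT(\omega)$ and $\calT'(\omega)$ are bounded since $\calT$ is holomorphic on $D$ (Lemma~\ref{lem:extensionOfTDomain}), $\calT'(\omega)u\in\V^*$ is a fixed element, and $\calP_{\bb}\in\V^*$. As $\VxC$ and $\VxC^*$ are both Banach spaces, the bounded inverse theorem now guarantees that $\partial\calF_{\bb}(u,\omega)^{-1}$ is bounded. There is no real obstacle in this argument; the only minor point to be careful about is checking that the closed-form solution for $\nu$ actually defines a bounded linear functional of $(f,g)$, which is immediate because $f\mapsto\calP_{\bb}\calT(\omega)^{-1}f$ is the composition of bounded maps and the denominator is a fixed nonzero constant.
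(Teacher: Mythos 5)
Your proposal is correct and follows essentially the same Schur-complement elimination the paper uses: solve the first row for $s$ via $\calT(\omega)^{-1}$, substitute into the normalization row, and use the nonvanishing scalar $\calP_{\bb}\calT(\omega)^{-1}\calT'(\omega)u$ to pin down $\nu$ uniquely, then invoke boundedness. No meaningful difference from the paper's argument.
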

\begin{proof}
Note that there exists a neighborhood around $\omlim$ in which $\calT(\omega)$ is invertible, since the spectrum of $\calT$ consists of isolated eigenvalues. We prove the bijectivity of $\partial\calF_{\bb}(u, \omega)$ by showing that \eqref{eq:showBijectivity} has a unique solution for all $f\in\V^*, g\in\C$.

The upper row of \eqref{eq:showBijectivity} implies
\[
  s= \calT^{-1}(\omega)(f- \nu\, \calT'(\omega)u )
\]
whereas the lower row implies
\[
  g=\calP_{\bb} s= \calP_{\bb}\calT^{-1}(\omega)(f- \nu \calT'(\omega)u ).
\]
Thus, $\nu\in\C$ is uniquely defined by 
\[
  \nu = \frac{\calP_{\bb}\calT^{-1}(\omega)f-g}{\calP_{\bb}\calT^{-1}(\omega)\calT'(\omega)u}
\] 
as long as the condition $\calP_{\bb}\calT^{-1}(\omega)\calT'(\omega)u\neq 0$ is satisfied. As a result, $\partial \calF_{\bb}(u, \omega)$ is linear, bijective, and bounded, which implies the existence of a bounded inverse. 
\end{proof}
The previous lemmata show that choosing $(u^0,\, \omega^0)$ close enough to $(\ulim, \omlim)$ in $E$ with
\begin{equation}
\label{eq:conditionStartingPoint}
\calP_{\bb}\calT^{-1}(\omega^0)\calT'(\omega^0)u^0\neq 0 \quad \text{ if } \quad \omega^0\neq \omlim
\end{equation} 
insures that $\partial \calF(u^0, \omega^0)$ has a bounded inverse. In this case, the first iteration step of \eqref{eq:1stNewtonIt} is well-defined. An estimate for the bound $\kappa_0$ appearing in Theorem~\ref{thm:Kantorovitch} can be derived from the fact that $\calF_{\bb}$ is twice continuously Fr{\'e}chet differentiable  and that $\partial^2\calF_{\bb}(u, \omega)$ is bounded for all $(u, \omega)\in E$, since
\[
\begin{aligned}
\Vert\partial^2\calF_{\bb}(u, \omega)\Vert_{\calL(E\times E,\VxC^*)}
&\coloneqq \sup\limits_{z_1, z_2 \in E} \frac{\Vert\partial^2\calF_{\bb}(u, \omega) (z_1, z_2)\Vert_{\VxC^*}}{\Vert z_1\Vert_{\VxC} \Vert z_2\Vert_\VxC}\\
& \leq 2\Vert\calT'(\omega)\Vert_{\calL(\V, \V^*)} + \Vert\calT''(\omega)u\Vert_{\V^*}.
\end{aligned}
\]
To sum up, the results of this section show the local convergence of the Newton iteration \eqref{eq:1stNewtonIt} under the assumption that the starting point~$(u^0,\, \omega^0)\in E$ is close enough to the eigenpair and satisfies the condition \eqref{eq:conditionStartingPoint}. 	
\section{Numerical Experiments}\label{sect:numerics}
In this final section we consider the proposed PDE eigenvalue iterations numerically.  
The first two examples in Sections~\ref{sect:numerics:lin} and~\ref{sect:numerics:nonlin}  consider the application of the inverse power method. 
Here, the essential idea is to combine the iterative solver and mesh refinement. 
Since we have proven the convergence on operator level, we can choose the spatial discretization in each step independently. 
Thus, we can start the computations on coarse grids and consider refinements as the eigenvalue iteration goes on, leading to substantial runtime improvements. 
Finally, Section~\ref{sect:numerics:Newton} shows results for the application of the Newton-type iteration, which was analyzed in Section~\ref{sect:nonlinear}.

In all three experiments the computational domain equals the unit square~$\Omega = (0,1)^2$ with a disk of radius $0.3$ in the middle defining~$\Omega_2$. The outer material is air, i.e., $\eps_1(\omega)\equiv \alpha_1 = 1$, whereas the relative electrical permittivity of the inner material varies. The spatial discretization has been performed with the software library {\em Concepts}~\cite{FraL02,ConceptsWeb} using piecewise polynomial finite element basis functions of degree~$2$ on a quadrilateral mesh with curved cells. We refer to~\cite{SchK09, SchK10} for more details on the application of finite elements to eigenvalue problems arising in photonic crystals simulations using {\em Concepts}. 
%
%
\subsection{Inverse power method for linear model}\label{sect:numerics:lin}
We consider the problem described in Example~\ref{exp:TM} for a fixed wave vector~$\k=[\pi/2, \pi]^T$ and a constant model $\eps_2(\omega)\equiv 8$ for the relative permittivity in $\Omega_2$. In order to find the smallest eigenvalue, we take as shift~$\beta=1$ and apply the Rayleigh quotient iteration~\eqref{eq:iter:power}. 

The obtained convergence history for the eigenvalue is shown in the top plot of Figure~\ref{fig:linear:conv}. More precisely, we show the relative error~$|\mu^j-{\protect \muref}|/|{\protect \muref}|$, where~$\muref$ denotes the reference solution (as approximation of~$\mulim$) obtained on a finer spatial mesh with a large number of inverse iteration steps. Here, the reference mesh corresponds to the sixth level of uniform refinement. Note that~$\muref > \mulim$ due to the spatial discretization error. 
We compare the results for~$3$, $5$, and $7$ iteration steps per mesh. This means that we perform a fixed number of iteration steps before we consider a uniform refinement of the mesh. When the fine mesh (corresponding here to the fifth level of refinement) is reached, the inverse power iterations continue without further mesh refinement. The corresponding convergence on this fine mesh is depicted by the dashed line in Figure~\ref{fig:linear:conv}. 
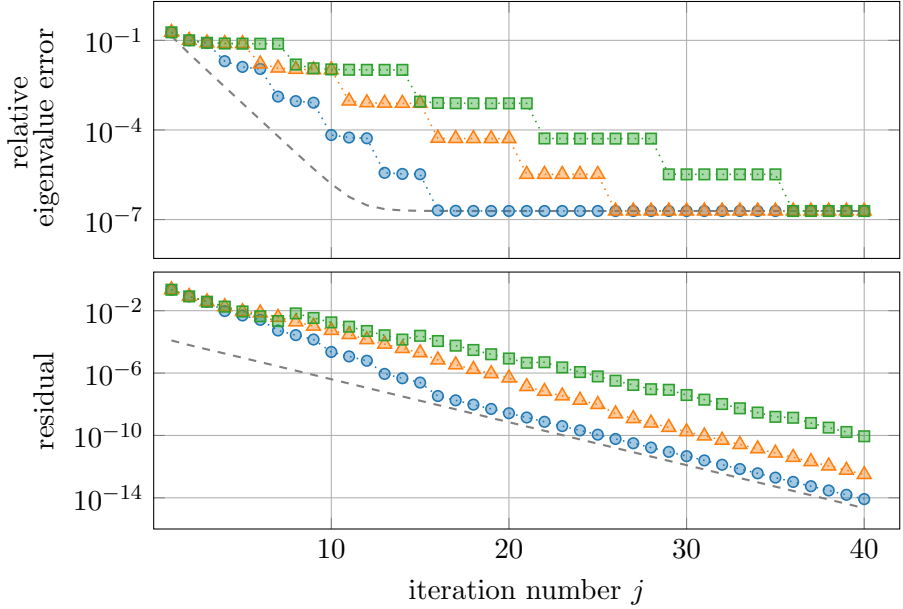
\begin{figure}
\begin{tikzpicture}
\begin{axis}[
width=11.5cm,
height=5cm,
ylabel={$\begin{aligned}&\qquad\text{relative}\\[-0.6em] &\text{eigenvalue error}\end{aligned}$},
xmin=0, xmax=42,
ymin=5.e-9, ymax=2,
ymode=log,
ylabel near ticks,
xtick = {10, 20, 30, 40},
xticklabels = {,,,,},
xmajorgrids,
x grid style={lightgray!92!black},
ytick = {1e-1, 1e-4, 1e-7},
ymajorgrids,
y grid style={lightgray!92!black}]
%
\addplot [semithick, color0, dotted, mark=*, mark size=2, mark options={solid,fill opacity=0.4}]
table {%
1 0.185826449978765
2 0.100372876106732
3 0.082652812028926
4 0.0200609648744385
5 0.0129159752888577
6 0.0110011615043542
7 0.00130959281714915
8 0.000922552089949104
9 0.000817665606795839
10 6.76454425724959e-05
11 5.58197490490863e-05
12 5.25417875934345e-05
13 3.66945012938705e-06
14 3.37259514366301e-06
15 3.28945005386341e-06
16 2.03137511939131e-07
17 1.96188967179214e-07
18 1.94236502460002e-07
19 1.93687542821225e-07
20 1.93533153913339e-07
21 1.9348972143378e-07
22 1.93477505860473e-07
23 1.93474075922652e-07
24 1.93473102993103e-07
25 1.93472838687903e-07
26 1.93472748994979e-07
27 1.93472739070972e-07
28 1.93472732287473e-07
29 1.93472728770104e-07
30 1.93472726634558e-07
31 1.93472729398205e-07
32 1.9347273831725e-07
33 1.93472730277548e-07
34 1.93472729775066e-07
35 1.93472729021344e-07
36 1.93472727011419e-07
37 1.93472732161853e-07
38 1.93472726885798e-07
39 1.93472729649446e-07
40 1.93472722740327e-07
};
\addplot [semithick, color1, dotted, mark=triangle*, mark size=3, mark options={solid, fill opacity=0.4}]
table {%
1 0.185826449978765
2 0.100372876106732
3 0.082652812028926
4 0.0784166323857641
5 0.0773917028112994
6 0.0162369352024553
7 0.011810429631089
8 0.0106899808636894
9 0.0103850192458063
10 0.0103010551727703
11 0.000931217673541885
12 0.000816208967085901
13 0.000787787822289832
14 0.000780102318968278
15 0.000777980408266764
16 5.27161094706512e-05
17 5.16422994934839e-05
18 5.13697934525666e-05
19 5.12954004832481e-05
20 5.12747598558394e-05
21 3.26592197405154e-06
22 3.25934190026089e-06
23 3.25762829621555e-06
24 3.25715688264205e-06
25 3.25702563036316e-06
26 1.93518505074006e-07
27 1.93484992581621e-07
28 1.93476125041673e-07
29 1.93473672555734e-07
30 1.93472997346395e-07
31 1.93472794972024e-07
32 1.93472746482572e-07
33 1.93472736307324e-07
34 1.93472726885798e-07
35 1.93472721609744e-07
36 1.93472727011419e-07
37 1.93472727262659e-07
38 1.93472730403168e-07
39 1.93472728770104e-07
40 1.93472727890761e-07
};
\addplot [semithick, color2, dotted, mark=square*, mark size=2, mark options={solid, fill opacity=0.4}]
table {%
1 0.185826449978765
2 0.100372876106732
3 0.082652812028926
4 0.0784166323857641
5 0.0773917028112994
6 0.0771433390339239
7 0.0770831049468819
8 0.0155157377354382
9 0.0115990234633677
10 0.0106301399470583
11 0.0103682524403855
12 0.0102963786830835
13 0.0102765423337779
14 0.0102710526108279
15 0.000885473902434554
16 0.000803373568043738
17 0.000784187205105084
18 0.00077909145128097
19 0.000777696466188945
20 0.000777309682890559
21 0.000777201750461937
22 5.20022489602464e-05
23 5.14437576982494e-05
24 5.13142781358071e-05
25 5.12798335411125e-05
26 5.12703882346449e-05
27 5.12677657008456e-05
28 5.12670330244714e-05
29 3.26037676465897e-06
30 3.25779608579905e-06
31 3.25719549569719e-06
32 3.2570354371664e-06
33 3.25699150270796e-06
34 3.25697929969668e-06
35 3.25697588948126e-06
36 1.93486834050216e-07
37 1.93476141623558e-07
38 1.9347364931597e-07
39 1.93472980638889e-07
40 1.93472803388587e-07
};
\addplot [thick, gray, dashed]
table {
1 0.140113394342501
2 0.0358007253251508
3 0.00995102455956826
4 0.00277888835768941
5 0.000777759905538272
6 0.000218216946908095
7 6.14112095763769e-05
8 1.73970442083047e-05
9 5.03037466166991e-06
10 1.55375540177945e-06
11 5.76078803064203e-07
12 3.01096179588431e-07
13 2.23747377564114e-07
14 2.01989214758922e-07
15 1.95868504536081e-07
16 1.94146690698899e-07
17 1.93662317377189e-07
18 1.93526065534295e-07
19 1.93487726331506e-07
20 1.93476949236737e-07
21 1.93473915002994e-07
22 1.93473063799556e-07
23 1.93472819719231e-07
24 1.93472757914023e-07
25 1.93472735302361e-07
26 1.93472728518863e-07
27 1.93472726885798e-07
28 1.93472729649446e-07
29 1.93472729272585e-07
30 1.93472725378354e-07
31 1.93472724750253e-07
32 1.93472722237846e-07
33 1.93472719851059e-07
34 1.93472728142002e-07
35 1.9347273103127e-07
36 1.93472722112225e-07
37 1.93472725880836e-07
38 1.93472726508937e-07
39 1.9347272361967e-07
40 1.93472725378354e-07
};
\end{axis}
\begin{axis}[
yshift=-3.6cm,
width=11.5cm,
height=5cm,
xlabel={iteration number $j$},,
ylabel={residual},
xmin=0, xmax=42,
ymin=1.e-16, ymax=3,
ymode=log,
ylabel near ticks,
xtick = {10, 20, 30, 40},
xmajorgrids,
x grid style={lightgray!92!black},
ytick = {1e-2, 1e-6, 1e-10, 1e-14},
ymajorgrids,
y grid style={lightgray!92!black}]
%
\addplot [semithick, color0, dotted, mark=*, mark size=2, mark options={solid, fill opacity=0.4}]
table {%
1 0.221484960861903
2 0.0827753165289429
3 0.0375935852361939
4 0.00946506394141359
5 0.00493935723147978
6 0.00259185687695923
7 0.000538085203566346
8 0.000270888767274281
9 0.00014161196649794
10 2.25800421305903e-05
11 1.1709579863994e-05
12 6.17305741186483e-06
13 8.84360429340744e-07
14 4.64677191347415e-07
15 2.46006531979232e-07
16 3.3730457706028e-08
17 1.78274447722144e-08
18 9.45236919986151e-09
19 5.01693595445998e-09
20 2.66319176110113e-09
21 1.41357422532451e-09
22 7.50181379065015e-10
23 3.98063217640799e-10
24 2.11197357872334e-10
25 1.12043667063581e-10
26 5.94372132311524e-11
27 3.15289056507995e-11
28 1.67241641176126e-11
29 8.8709271372815e-12
30 4.70528063668324e-12
31 2.49572111942084e-12
32 1.32373393647983e-12
33 7.0210176669177e-13
34 3.72391768173602e-13
35 1.97509398258372e-13
36 1.04752991082738e-13
37 5.55566814440155e-14
38 2.94616712802113e-14
39 1.56218717484272e-14
40 8.28263385136748e-15
};
\addplot [semithick, color1, dotted, mark=triangle*, mark size=3, mark options={solid, fill opacity=0.4}]
table {%
1 0.221484960861903
2 0.0827753165289429
3 0.0375935852361939
4 0.0181573009618359
5 0.00890597557606109
6 0.00720549357132703
7 0.00370709690233899
8 0.00193121399837388
9 0.00102185681689574
10 0.000542296975246995
11 0.000294500638484862
12 0.000139401280299951
13 7.12651569310237e-05
14 3.76063571052986e-05
15 2.00161025428083e-05
16 6.93972620149236e-06
17 3.36723112425815e-06
18 1.73115966331441e-06
19 9.13008722500774e-07
20 4.85333015308804e-07
21 1.36030415929596e-07
22 6.66444897419712e-08
23 3.44264711834694e-08
24 1.8172919939263e-08
25 9.65728220763336e-09
26 2.43274228472124e-09
27 1.19953488696445e-09
28 6.21501923061147e-10
29 3.28269107220309e-10
30 1.74413234724912e-10
31 9.2781118416056e-11
32 4.93462197087998e-11
33 2.62312030909206e-11
34 1.39365158870151e-11
35 7.40119375435919e-12
36 3.92919636540962e-12
37 2.0854316542793e-12
38 1.10664057656891e-12
39 5.87160068968701e-13
40 3.11498588746912e-13
};
\addplot [semithick, color2, dotted, mark=square*, mark size=2, mark options={solid, fill opacity=0.4}]
table {%
1 0.221484960861903
2 0.0827753165289429
3 0.0375935852361939
4 0.0181573009618359
5 0.00890597557606109
6 0.00438317201379794
7 0.00215835842493368
8 0.00668554421023508
9 0.00342086227469078
10 0.00177740818061363
11 0.000940312982605482
12 0.00049930752687016
13 0.000264911509127821
14 0.000140321727050614
15 0.00024637839859436
16 0.000113375685169391
17 5.73462715664257e-05
18 3.01927952719651e-05
19 1.60754777021312e-05
20 8.57559326552729e-06
21 4.5717658525111e-06
22 4.95175815228783e-06
23 2.29882158867032e-06
24 1.16148445826052e-06
25 6.09822701704899e-07
26 3.24142066287734e-07
27 1.7277878869536e-07
28 9.20841852703284e-08
29 8.45058978901141e-08
30 3.90062420420405e-08
31 1.97075669346226e-08
32 1.03449993932649e-08
33 5.49736186671598e-09
34 2.92966920141641e-09
35 1.56115312690422e-09
36 1.36680536702649e-09
37 6.28332477508002e-10
38 3.17489776632386e-10
39 1.66655604278281e-10
40 8.85548667616669e-11
};
\addplot [thick, gray, dashed]
table {%
1 0.000123998951860924
2 6.23503408001514e-05
3 3.32444869744427e-05
4 1.78688379095011e-05
5 9.55644616777062e-06
6 5.08759362542443e-06
7 2.70127255870723e-06
8 1.43237676249321e-06
9 7.59134757845057e-07
10 4.02278736715814e-07
11 2.13187134185021e-07
12 1.12992574689523e-07
13 5.98961461632084e-08
14 3.17542959696498e-08
15 1.68364976006586e-08
16 8.92765499265106e-09
17 4.73425010694947e-09
18 2.5106511318791e-09
19 1.33148892681186e-09
20 7.0615597738347e-10
21 3.74517830624973e-10
22 1.98632743952279e-10
23 1.05349816465022e-10
24 5.58753596336542e-11
25 2.9635297082631e-11
26 1.5718096155955e-11
27 8.33665910556533e-12
28 4.42165707711796e-12
29 2.34518679750731e-12
30 1.24385861128495e-12
31 6.59728361498239e-13
32 3.49910727668503e-13
33 1.85587440375055e-13
34 9.84276708125223e-14
35 5.22017340437774e-14
36 2.76837560658183e-14
37 1.46815240608305e-14
38 7.78360211942783e-15
39 4.12318147809444e-15
40 2.18217375776213e-15
};
\end{axis}
\end{tikzpicture}
\caption{The two plots show the relative error in the eigenvalue, i.e., $|\mu^j-{\protect \muref}|/|{\protect \muref}|$ (top) and the residual estimated in the dual norm, i.e., $\Vert\Res(u^j,\lambda^j)\Vert_{\V^*}$ (bottom). In both plots we compare the results for~$3$ ({\protect \blueCirLight{}}), $5$ ({\protect \orangeTriLight{}}), and $7$ ({\protect \greenSquLight{}}) iteration steps per mesh. The dashed line corresponds to computations without refinements on the fine mesh.}
\label{fig:linear:conv}
\end{figure}

One observation is that the results for~$3$ iteration steps ({\protect \blueCirLight{}}) per mesh are comparable to the results obtained on the fine mesh. 
We emphasize that the first $15$ iteration steps are roughly as expensive as a single step on the fine mesh but leading to a much better accuracy. 
Further note that the saturation of the convergence of the eigenvalue is due to the discretization error, since $\muref$ is computed on a finer spatial mesh than the iterates~$\mu^j$. The computational gain is even more distinct for the convergence of the residuum (and thus the eigenfunction), which is shown in the bottom plot of Figure~\ref{fig:linear:conv}. Here, the residuum~$\Res\colon \V\times \C \to \V^*$ is defined by 
\[
  \Res(u, \lambda) 
  \coloneqq \big(\Ak - \lambda \calI\big)u \in \V^*
\]
and measured in the dual norm, i.e., 
\[
  \norm{\Res(u, \lambda)}_{\V^*}
  \coloneqq \sup_{\substack{v\, \in\, \V,\\ \norm{v}_{\V}=1}} \big|\langle \Res(u, \lambda), v \rangle_{\calV^*, \calV} \big|.
\]
In the discrete setting, the dual norm is given by $\Vert \cdot \Vert_{M(K+M)^{-1}M}$. 
The results for $5$ ({\protect \orangeTriLight{}}) and $7$ ({\protect \greenSquLight{}}) iteration steps per mesh exhibit a similar behavior, i.e., a given tolerance is reached faster, since the first iteration steps are performed on coarse meshes, cf.~Table~\ref{tab:runtimes:lin}. 

\begin{table}	
\caption{Runtime comparison (in seconds) until the residual reaches the tolerance tol=$10^{-10}$ in the first numerical experiment.}
\begin{tabular}{r|r|r|r}
	 on fine mesh & $3$ steps per mesh & $5$ steps per mesh & $7$ steps per mesh \\[0.1em] \hline 
	 129.136 & 72.396 & 46.585 & 42.308		\\
\end{tabular}
\label{tab:runtimes:lin}
\end{table}
%
%
\subsection{Inverse power method for nonlinear model}\label{sect:numerics:nonlin}
Motivated by the numerical example in~\cite{EffKE12} we consider a material within the disk $\Omega_2$ having a frequency-dependent permittivity described by a two-term Lorentz model, cf.~\eqref{eq:DrLoModel}, with positive and real parameters 
\[
  \alpha_2 = 2, \qquad
  \xi^2_1 = 98.6960, \qquad
  \xi^2_2 = 197.3921, \qquad 
  \eta^2_1 = 55.2698, \qquad
  \eta^2_2 = 63.1655. 
\]
Recall that we consider here relative permittivities as discussed in Section~\ref{sect:formulation:exp}. As in the previous example, we are interested in the lower-most eigenvalue of the corresponding nonlinear eigenvalue problem~\eqref{eq:nonlinEVP:realCase} for the fixed wave vector~$\k=[\pi/2, \pi]^T$. 

For the numerical solution of the eigenvalue problem, we apply the inverse power method~\eqref{eq:linearizedEVP:invIteration} to the linearized system with shift~$\beta=\eta_2^2-\eta_1^2+1$. This shift guarantees the positivity of the involved operator, cf.~Lemma~\ref{lem_pos_bigAForm}. In Figure~\ref{fig:nonlinear:conv} we compare again the results for a fixed number of iteration steps per mesh. Here, the computational results are even more convincing as the plots for~$3$ and~$7$ iteration steps per mesh are very similar to the results obtained on the fine mesh, although the computational costs are significantly smaller. 
Note that the iterations are based on the linearization~\eqref{eq:linearizedEVP:realCase} but the residuals are computed in terms of the nonlinear eigenvalue problem~\eqref{eq:nonlinEVP:realCase}. 
\begin{figure}
\begin{tikzpicture}
\begin{axis}[
width=11.5cm,
height=5cm,
ylabel={$\begin{aligned}&\qquad\text{relative}\\[-0.6em] &\text{eigenvalue error}\end{aligned}$},
xmin=0, xmax=42,
ymin=2.e-7, ymax=1,
ymode=log,
ylabel near ticks,
xtick = {10, 20, 30, 40},
xticklabels = {,,,,},
xmajorgrids,
x grid style={lightgray!92!black},
ytick = {1e-1, 1e-3, 1e-5},
ymajorgrids,
y grid style={lightgray!92!black}]
%
\addplot [semithick, color0, dotted, mark=*, mark size=2, mark options={solid,fill opacity=0.4}]
table {%
1 0.175172144078689
2 0.0803178018098995
3 0.0537995463048197
4 0.0329138647203931
5 0.0218875397465976
6 0.0149639433470159
7 0.00860023252116874
8 0.00547585345938661
9 0.00351426525530772
10 0.00209340790353609
11 0.00130951884713874
12 0.000820917074847774
13 0.000502433506628343
14 0.000312649933833394
15 0.000194660973449482
16 0.000121330388674719
17 7.57639867144935e-05
18 4.74530139171438e-05
19 2.98642075686684e-05
20 1.89371805445085e-05
21 1.21489106212013e-05
22 7.93183047941195e-06
23 5.31206363155383e-06
24 3.6845945508762e-06
25 2.67356739882883e-06
26 2.04549000386232e-06
27 1.65531100873634e-06
28 1.4129208366658e-06
29 1.26234111855998e-06
30 1.16879661760483e-06
31 1.11068401899076e-06
32 1.07458274493104e-06
33 1.05215555095524e-06
34 1.03822310324783e-06
35 1.02956784414356e-06
36 1.0241909337212e-06
37 1.02085063393564e-06
38 1.01877553790172e-06
39 1.01748642501025e-06
40 1.0166855876872e-06
};
\addplot [semithick, color2, dotted, mark=square*, mark size=2, mark options={solid, fill opacity=0.4}]
table {%
1 0.175172144078689
2 0.0803178018098995
3 0.0537737355996108
4 0.0412873730617694
5 0.0343492217763272
6 0.0303710037562348
7 0.0281533727127705
8 0.012678340686425
9 0.00891651777687578
10 0.0067830979633678
11 0.00548256865848802
12 0.00468072381969973
13 0.00418565908658012
14 0.00387932760607062
15 0.000890923252314097
16 0.000640144905637861
17 0.000493125594678522
18 0.000403165325528262
19 0.000347664677915481
20 0.0003133279947918
21 0.000292056065468048
22 4.4350065621468e-05
23 3.38321453635566e-05
24 2.73998815995662e-05
25 2.3421890871173e-05
26 2.09567187160714e-05
27 1.94277271967552e-05
28 1.84789400209521e-05
29 2.05693376441782e-06
30 1.66137187441648e-06
31 1.41643728884386e-06
32 1.26442667312641e-06
33 1.17004920406392e-06
34 1.11144294891034e-06
35 1.07504553939396e-06
36 1.05243911508592e-06
37 1.03839747052933e-06
38 1.02967535069728e-06
39 1.02425734858843e-06
40 1.02089172364996e-06
};
\addplot [thick, gray, dashed]
table {
1 0.105187160700136
2 0.050781778128737
3 0.0310692041456039
4 0.0195920616709266
5 0.0123513269176417
6 0.00775214078562765
7 0.00484695731517787
8 0.00302265878405023
9 0.00188200605454039
10 0.00117078178316744
11 0.000728065868195904
12 0.000452761478627043
13 0.00028165833073459
14 0.000175348947450321
15 0.000109307154858227
16 6.82833165175133e-05
17 4.28007208416957e-05
18 2.69718106163181e-05
19 1.71393335067245e-05
20 1.10315858644547e-05
21 7.23751393785852e-06
22 4.88064307495427e-06
23 3.41654246479468e-06
24 2.50702701014008e-06
25 1.94202178658898e-06
26 1.59102965999165e-06
27 1.37298560977021e-06
28 1.23753136620252e-06
29 1.15338369407816e-06
30 1.10110887991765e-06
31 1.06863430437934e-06
32 1.04846016472451e-06
33 1.035927401052e-06
34 1.02814167684092e-06
35 1.02330495235328e-06
36 1.02030023523026e-06
37 1.01843361215589e-06
38 1.01727400939331e-06
39 1.01655362798007e-06
40 1.01610610640315e-06
};
\end{axis}
\begin{axis}[
yshift=-3.6cm,
width=11.5cm,
height=5cm,
xlabel={iteration number $j$},,
ylabel={residual},
xmin=0, xmax=42,
ymin=0.5e-7, ymax=5,
ymode=log,
ylabel near ticks,
xtick = {10, 20, 30, 40},
xmajorgrids,
x grid style={lightgray!92!black},
ytick = {1e0, 1e-2, 1e-4, 1e-6},
ymajorgrids,
y grid style={lightgray!92!black}]
%
\addplot [semithick, color0, dotted, mark=*, mark size=2, mark options={solid,fill opacity=0.4}]
table {%
1 1.17416043209588
2 0.674530144579041
3 0.38269750673664
4 0.0592695473262225
5 0.046215243441817
6 0.036920495343898
7 0.00743553551530593
8 0.00600090073591261
9 0.00483287315788085
10 0.000957691600617551
11 0.000767176478598096
12 0.000612667170383346
13 0.000121349978635718
14 9.64507093875243e-05
15 7.65253985429383e-05
16 6.06307353262659e-05
17 4.79829599363156e-05
18 3.79392612214528e-05
19 2.99765327391922e-05
20 2.36718226190572e-05
21 1.86850006871819e-05
22 1.47437408757639e-05
23 1.16307664955964e-05
24 9.17319794371116e-06
25 7.23377506355218e-06
26 5.7036992816423e-06
27 4.49684302807301e-06
28 3.54509368305854e-06
29 2.79462709620958e-06
30 2.20293564512049e-06
31 1.73646448179448e-06
32 1.36873542390596e-06
33 1.07886027843639e-06
34 8.5036421896269e-07
35 6.70255443915683e-07
36 5.282901084245e-07
37 4.16391874743355e-07
38 3.28193758830743e-07
39 2.58676630670689e-07
40 2.03884050906466e-07
};
\addplot [semithick, color2, dotted, mark=square*, mark size=2, mark options={solid, fill opacity=0.4}]
table {%
1 1.17416043209588
2 0.674530144579041
3 0.38269750673664
4 0.247136452563845
5 0.176467064629849
6 0.1324190945795
7 0.101193192560847
8 0.034721523948619
9 0.0279984563513698
10 0.0218131224936807
11 0.0171440677729511
12 0.0135631401337355
13 0.010753179105889
14 0.00852338584839072
15 0.00241921286909601
16 0.00183302194952671
17 0.0014045508461
18 0.00109367255143312
19 0.000858860148839966
20 0.000677132961031907
21 0.000534731100874874
22 0.000119905126631267
23 9.32175693964593e-05
24 7.27428939919476e-05
25 5.70652847173276e-05
26 4.49080529059341e-05
27 3.53981672851416e-05
28 2.79228419025384e-05
29 5.7553928263497e-06
30 4.51322332494001e-06
31 3.54576405758959e-06
32 2.79077824119236e-06
33 2.19893151987355e-06
34 1.73355068070781e-06
35 1.36699336569284e-06
36 1.0780314739734e-06
37 8.50153047472934e-07
38 6.70420811616633e-07
39 5.28659629234278e-07
40 4.16851612707478e-07
};
\addplot [thick, gray, dashed]
table {
1 0.00210633187918105
2 0.00128463915290457
3 0.000844500700193529
4 0.000637575277742926
5 0.000513171233030634
6 0.000420227565177221
7 0.000343801694059824
8 0.000279568818297522
9 0.000225847334814165
10 0.000181418233452752
11 0.000145072547771062
12 0.000115604507637002
13 9.1877692710403e-05
14 7.28735811450501e-05
15 5.77121087616966e-05
16 4.56521074879638e-05
17 3.6080518844788e-05
18 2.849668104598e-05
19 2.24954528104974e-05
20 1.77511667403522e-05
21 1.40033189624201e-05
22 1.10442867928338e-05
23 8.70904355818688e-06
24 6.86668781814531e-06
25 5.41354735716644e-06
26 4.26761154532038e-06
27 3.3640632408865e-06
28 2.65170902982156e-06
29 2.09013664749228e-06
30 1.64745709577457e-06
31 1.2985145684739e-06
32 1.02346929211334e-06
33 8.06676812851995e-07
34 6.35802455167092e-07
35 5.01122042394893e-07
36 3.94969969093739e-07
37 3.11303732572416e-07
38 2.45360446675703e-07
39 1.93385964264679e-07
40 1.52421294672439e-07
};
\end{axis}
\end{tikzpicture}
\caption{Convergence history of the inverse power iteration for the nonlinear example. The relative error in the eigenvalue (top) and the residual estimated in the dual norm (bottom) are shown. We compare the results for~$3$ ({\protect \blueCirLight{}}) and $7$ ({\protect \greenSquLight{}}) iteration steps per mesh. The dashed line corresponds to computations on the fine mesh.}
\label{fig:nonlinear:conv}
\end{figure}
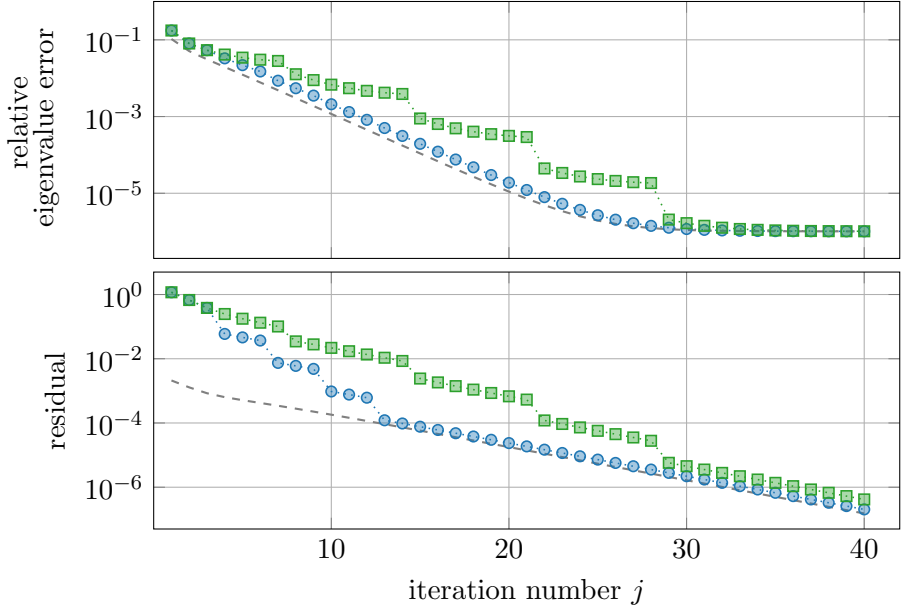
A comparison of the runtimes, showing the speed-up due to the combination of the iterative solver and mesh refinement, is shown in Table~\ref{tab:runtimes:nonlin}. 
\begin{table}	
\caption{Runtime comparison (in seconds) until the residual reaches the tolerance tol=$10^{-6}$ in the second numerical experiment.} 
\begin{tabular}{r|r|r|r}
	on fine mesh & $3$ steps per mesh & $5$ steps per mesh & $7$ steps per mesh \\[0.1em] \hline 
	100.56 & 71.96 & 51.39 & 34.49		\\
\end{tabular}
\label{tab:runtimes:nonlin}
\end{table}

Finally, the eigenmode corresponding to the first eigenvalue obtained by the inverse power method \eqref{eq:linearizedEVP:invIteration} applied to the linearized system is depicted in Figure~\ref{fig:eigenmode}. 
\begin{figure}
\begin{subfigure}{.5\textwidth}
  \centering
  \includegraphics[width=.8\linewidth]{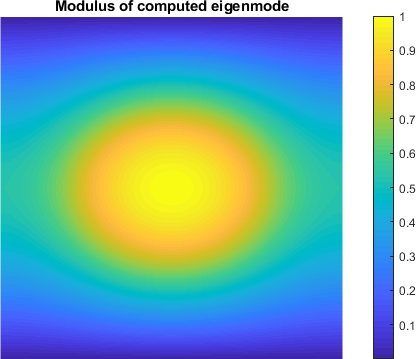}
\end{subfigure}%
\hspace{-1.15cm}
\begin{subfigure}{.5\textwidth}
  \centering
  \includegraphics[width=.8\linewidth]{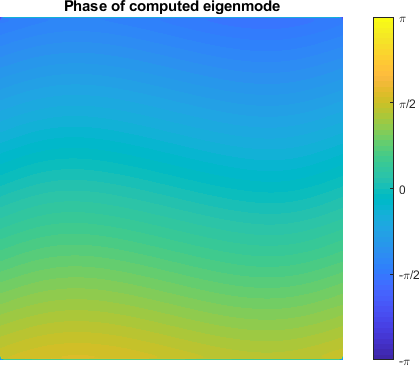}
\end{subfigure}
\caption{Illustration of the eigenmode corresponding to the first eigenvalue of the nonlinear eigenvalue problem of Section~\ref{sect:numerics:nonlin} showing its modulus (left) and phase (right).}
\label{fig:eigenmode}
\end{figure}
%
%
\subsection{Newton method for full Drude-Lorentz model}\label{sect:numerics:Newton}
In this final experiment we consider 66\% porous silicon within the disk $\Omega_2$ and model its relative permittivity with a function of the form \eqref{eq:realDrudeLorentzModel} using $\alpha_\text{DL} = 1.143$ and the material parameters given in~Table~\ref{tab:pourousSilicon}. 
\begin{table}
\caption{Material parameters representing 66\% porous silicon, used in the third numerical experiment, cf.~\cite{EffKE12}.}
\begin{tabular}{c|ccccccc}
$\ell$&1&2&3&4&5&6&7\\ \hline
$\xi_\ell^2$& 416.6166 & 352.7054&-339.9124&492.5687&-19.6143&-527.5597&98.0101
\\[0.1cm]
$\eta_\ell^2$&92.1086& 71.6269& 71.4552& 227.8301& 47.4923&  93.5605& 121.3762\\[0.1cm]
$\gamma_\ell$&2.7820& 0.9597& 0.9500& 13.1508& 9.2697& 3.2624& 2.2712\\[0.1cm]
\end{tabular}
\label{tab:pourousSilicon}
\end{table}
We apply the Newton iteration \eqref{eq:1stNewtonIt} with a randomly chosen normalizing vector $y$ to the nonlinear eigenvalue problem \eqref{eq:nonlinearEVP} for the fixed wave vector~$\k=[\pi/2, \pi]^T$. Since this eigenvalue problem depends only on the square of $\omega$, we set $\lambda\coloneqq\omega^2$ and actually consider $\lambda$ to be the eigenvalue instead of $\omega$.

In order to produce starting values $u^0$ and $\lambda^0$ for the Newton iteration, we approximate $\eps_\text{DL}^{\Re}(\omega)$ by the constant value $2$ in the disk $\Omega_2$ and apply 8 Rayleigh quotient iteration steps, similarly as in Section~\ref{sect:numerics:lin}.   

The resulting convergence history of the eigenvalue and the residual are shown in Figure \ref{fig:Newton:conv}. 
One can observe that already 6 iterations on the fine mesh (corresponding to the fourth level of refinement) are sufficient to converge to an eigenpair for which the residual is of the order of magnitude of the machine precision. 
With such a fast converging method, a mesh refinement should be applied after no more than 2 iterations per mesh. 
%
%
\begin{figure}
\begin{tikzpicture}
\begin{axis}[
width=11.5cm,
height=5cm,
ylabel={$\begin{aligned}&\qquad\text{relative}\\[-0.6em] &\text{eigenvalue error}\end{aligned}$},
xmin=0.5, xmax=12.5,
ymin=1.e-6, ymax=4,
ymode=log,
ylabel near ticks,
xtick = {5,10},
xticklabels = {,,,,},
xmajorgrids,
x grid style={lightgray!92!black},
ytick = {1e-1, 1e-3, 1e-5},
ymajorgrids,
y grid style={lightgray!92!black}]
\addplot [semithick, color0, dotted, mark=*, mark size=2, mark options={solid,fill opacity=0.4}]
table {%
1 0.180951712548264
2 0.0983219588060116
3 0.00203522834226243
4 0.0174275805461479
5 0.000933386647162685
6 0.00142269054204026
7 9.10543368458188e-05
8 9.43366276253895e-05
9 5.6564233954108e-06
10 5.67104064360072e-06
11 5.67104065046846e-06
12 5.67104064247946e-06
};

\addplot [thick, gray, dashed]
table {
1 0.341088586628126
2 0.0920806076627799
3 0.0112870218589273
4 0.000210752987189773
5 5.59516748535875e-06
6 5.67104065046846e-06
7 5.6710406423393e-06
8 5.67104064962751e-06
9 5.67104064051724e-06
10 5.67104064948736e-06
11 5.67104063673298e-06
12 5.67104064402119e-06
};
\end{axis}
\begin{axis}[
yshift=-3.6cm,
width=11.5cm,
height=5cm,
xlabel={iteration number $j$},,
ylabel={residual},
xmin=0.5, xmax=12.5,
ymin=1e-19, ymax=150,
ymode=log,
ylabel near ticks,
xtick = {5,10},
xmajorgrids,
x grid style={lightgray!92!black},
ytick = {1e0, 1e-8, 1e-16},
ymajorgrids,
y grid style={lightgray!92!black}]
%
\addplot [semithick, color0, dotted, mark=*, mark size=2, mark options={solid,fill opacity=0.4}]
table {%
1 0.254982238392878
2 0.0230080676451307
3 0.00491370046610074
4 0.000129307829872163
5 2.72699674969724e-05
6 2.24514216454097e-08
7 4.27469906784095e-08
8 2.34375300871286e-13
9 4.68723870633866e-11
10 3.63567257997596e-17
11 3.34663843421674e-17
12 2.04432277376356e-17
};
%
\addplot [thick, gray, dashed]
table {
1 0.000830895133544129
2 0.000255989103210334
3 3.17341973054715e-05
4 6.04816901055567e-07
5 2.30214202325759e-10
6 8.07890212715645e-17
7 1.80105960158348e-17
8 2.37213223778371e-17
9 3.09047694491167e-17
10 4.14491294588821e-17
11 2.67822773381992e-17
12 2.08688416929541e-17
};
\end{axis}
\end{tikzpicture}
\caption{Convergence history for the Newton method. The relative error in the eigenvalue (top) and the residual measured in the dual norm (bottom) are shown for~$2$ ({\protect \blueCirLight{}}) iteration steps per mesh and on the fine mesh (dashed line).}
\label{fig:Newton:conv}
\end{figure}
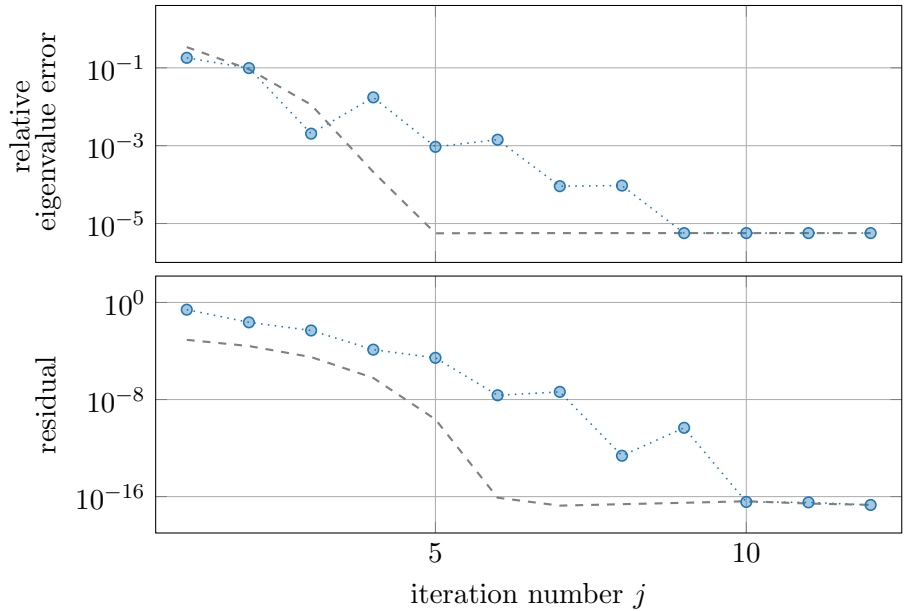		    
%
%
\section{Conclusion}\label{sect:conclusion}
In this paper, we have considered iterative methods for linear Hermitian as well as specific nonlinear eigenvalue problems arising in photonic crystal modeling. 
In case the electric permittivity is given by a Drude-Lorentz model with real coefficients and no dissipation, we are able to linearize the problem to obtain a linear and Hermitian eigenvalue problem. For this, we show the convergence of the inverse power method. 

For more realistic models taking dissipation into account, the same procedure would lead to a linear but non-Hermitian eigenvalue problem. Thus, instead of a linearization we directly apply Newton's method, for which we prove local convergence. 
%
%
\section*{Acknowledgements}
The authors thank Christoph Zimmer (TU Berlin) for the helpful discussions on the paper.
%
%
\newcommand{\etalchar}[1]{$^{#1}$}

\end{document}